\newcommand{\N}{\mathbb{N}}
\newcommand{\R}{\mathbb{R}}
\renewcommand{\d}{\mathrm{d}}
\newcommand{\D}{\mathcal{D}}
\newcommand{\h}{\mathcal{H}}
\newcommand{\s}{\mathcal{S}}
\renewcommand{\leq}{\leqslant}
\renewcommand{\geq}{\geqslant}
\newtheorem{theorem}{Theorem}  
\newtheorem{proposition}{Proposition}
\newtheorem{definition}{Definition}
\newtheorem{lemma}{Lemma}
\theoremstyle{definition}\newtheorem{example}{Example}
\theoremstyle{definition}\newtheorem{remark}{Remark}
\title{The general setting for Shape Analysis}
\date{}
\author{Sylvain Arguill\`ere\footnote{Center for imaging science, Johns Hopkins University, Baltimore, MD, USA ({\tt sarguil1@johnshopkins.edu}).}
}
\begin{document}

\maketitle

\begin{abstract}
In shape analysis, the concept of shape spaces has always been vague, requiring a case-by-case approach for every new type of shape. In this paper, we give a general definition for an abstract space of shapes in a manifold $M$. This notion encompasses every shape space studied so far in the literature, and offers a rigorous framework for several possible generalizations. We then give the appropriate setting for LDDMM methods of shape analysis, which arises naturally as a sub-Riemannian structure on a shape space. This structure is deduced from the space of infinitesimal deformations and their infinitesimal action. We then describe the properties of the Hamiltonian geodesic flow, and study several applications of equivariant mappings between shape spaces.
\end{abstract}


\section*{Introduction}


Mathematical shape analysis is a relatively recent area of study, whose goal is to compare several shapes while keeping track of their geometric properties. This is done by finding a deforming an initial shape (a \emph{template}) into another target shape in a way that minimizes a certain cost that depends on the properties of the shapes. This implies that a cost has been assigned to every possible deformation of the template, the design of this cost function being a crucial step in the method. This technique has lead to great contributions in computational anatomy (see \cite{GM}).

A powerful method, the Large Deformation Diffeomorphic Metric Mapping (LDDMM), represents deformations as flows of diffeomorphisms on the ambient space $M$ in which the shape is embedded, this flow being generated by certain time-dependent vector fields \cite{DGM,T1,T2} on the ambient space. These vector fields are chosen at each time in a fixed space $V$ of "infinitesimal transformations", equipped with a Hilbert product $\langle\cdot,\cdot\rangle$. In other words, we have a deformation $t\mapsto q(t)$ of the initial shape $q(0)=q_0$ given by
$$
q(t)=\varphi(t)\cdot q_0,
$$
where $\varphi(t)$ is the flow at time $t$ of the time-dependent vector field $t\mapsto X(t)\in V$: $\partial_t\varphi(t,x)=X(t,\varphi(t,x))$. The cost of this deformation is then defined by the energy, or action, of the deformation $\varphi(\cdot)$, given by
$$
\int \langle X(t),X(t)\rangle \d t.
$$
The goal is then to get the initial shape close to the target shape while minimizing this cost. Note that these transformations preserve local (such as the smoothness) and global (such as the number of self-intersections) geometric properties of the deformed shape.


One of the main issues with mathematical shape analysis is that the concept of a ``shape space" is rather vague: the literature only studies \emph{examples} of shape spaces (usually spaces of embeddings of a manifold $N$ in $M$, or just spaces of submanifolds in $M$ \cite{BHM,TY2}), without giving a good definition of what a shape space is. This not only forces a case by case approach, it can also limit the kind of shapes that are studied. For example, none of the existing methods tackle the issue of fibered shapes (submanifolds of $M$ coupled with a fixed tangent vector field, representing the direction of a fiber), which would be very useful when studying the movement of a muscle, for example. This paper aims to remedy this situation by giving an abstract definition of a shape space that unifies and generalizes all the different shape spaces studied so far, and to generalize to this setting all the tools of the LDDMM framework. 

In order to define the space of infinitesimal transformations and the corresponding norm, two ways are classically considered in the existing literature. The first, used for example in \cite{BHM,BBHM,MM}, consists of considering all smooth vector fields with compact support. The resulting structure is a weak right-invariant Riemannian structure on the Lie group of smooth diffeomorphisms with compact support on $M$ \cite{KMBOOK}, which projects into a weak Riemannian metric on the shape space. 

The second, which is the point of view adopted throughout most of this paper, consists of considering an (a priori arbitrary) Hilbert space $(V,\langle\cdot,\cdot\rangle)$ of vector fields on $M$ with continuous inclusion in the space of vector fields vector fiels of high enough Sobolev regularity. The space $V$ is usually defined using a corresponding \emph{reproducing kernel} (see Section 1.2). This is the framework that lead to the development of the so-called LDDMM methods (see \cite{AG,AGH,AGR,BMTY,CCT,JM,MTY1,MTY2,YBOOK}). The shape spaces are then usually studied as \emph{strong Riemannian manifolds}, for which deformations with minimal costs are, again, geodesics. 

However, we will see that in this context, the shape spaces were actally \emph{sub-Riemannian}, and not Riemannian. In the existing literature, the authors were not familiar with sub-Riemannian geometry, which is why the subject was treated from the Riemannian point of view\footnote{This does not make their result incorrect though: first of all, in these methods, geodesics are usually found using the Hamiltonian viewpoint. Second of all, in practical applications, the shape spaces are often finite dimensional spaces of landmarks, for which the structure is actually Riemannian.}. 
Moreover, since the shape spaces we are dealing with are infinite dimensional, we get infinite-dimensional sub-Riemannian manifolds. This complicates matters even more, but can be worked around, as we will see.

The paper is organized as follows. In Section 1, we recall the results of \cite{AT} on right-invariant sub-Riemannian structures on the group $\mathrm{Diff}(M)$ of diffeomorphisms of a smooth manifold $M$ with bounded geometry, which will be necessary for all the results of this paper. Then, in Section 2, we give the definition of an abstract shape space in $M$, deduce the sub-Riemannian structure it inherits from the action of $\mathrm{Diff}(M)$, and study the corresponding notions of distance, geodesics, singular curves, and their applications to LDDMM methods. We also give some examples of Hamiltonian geodesic equations for several shape spaces, some that were already known but treated from a Riemannian viewpoint, some completely new. Finally, in Section 3, we look for symmetries in the geodesic flow, and study various applications of equivariant mappings between shape spaces. We also define the notion of \emph{lifted shape spaces} which generalize the concept of diffeons from \cite{Y1}, and give a few examples of geodesic equations on finite dimensional lifted shape spaces.

\section{Sub-Riemannian structures on groups of diffeomorphisms}\label{sec2}
In shape analysis, and in the LDDMM methods in particular, one uses flows of vector fields on the ambient space $M$ to compare shapes. Moreover, these vector fields belong to a fixed, arbitrary Hilbert space $V$. However, as shown in \cite{AT}, such a setting naturally endowes the group of diffeomorphisms of $M$ with a \emph{sub-Riemannian structure}. The purpose of this section is to describe this structure and to give a brief summary of the results of \cite{AT}. 

\subsection{The group of diffeomorphisms}
We consider $(M,g^M)$ and $(N,g^N)$ smooth oriented Riemannian manifolds with bounded geometry. Recall that this means that their global injectivity radius is positive and that for every $i\in\N$, the Riemannian norm of the $i$-th covariant derivative of their curvature tensor is bounded. Under these assumptions, for any integer  $s>\dim(M)/2$, one can define a smooth Hilbert structure on the space $H^s(M,N)$ of maps of Sobolev class $H^s$ from $M$ to $N$ (see \cite{AT,EM,S}).  These spaces are metrisable and we denote $d_{H^s(M,N)}$ the corresponding distances.

\begin{remark}
We obtain a decreasing sequence of Hilbert manifolds with dense continuous inclusions $H^{s+1}(M,N)\hookrightarrow H^s(M,N)$, so that the limit $H^\infty(M,N)$ is a smooth Inverse Limit Hilbert (ILH) manifold \cite{O}.
\end{remark}

For $s>d/2+1$, let $\mathcal{D}^s(M)=(H^s(M,M)\cap\mathrm{Diff}(M))_e$ be the connected component of $e=\mathrm{Id}_M$ in the space of $\mathcal{C}^1$-diffeomorphisms of $M$ that also belong to $H^s(M,M)$. Then $\mathcal{D}^s(M)$ is an open subset of $H^s(M,M)$, and hence a Hilbert manifold. Its tangent space at $e$ is given by $T_e\D^s(M)=\Gamma^s(TM)$ the space of all vector fields on $M$ of Sobolev class $H^s$. 
It is given everywhere else by $T_\varphi\D^s(M)=\Gamma^s(TM)\circ\varphi$. It is also a group for the composition $(\varphi,\psi)\mapsto\varphi\circ\psi$. This group law satisfies the following conditions: 
\begin{enumerate}
\item \textbf{Continuity:} $(\varphi,\psi)\mapsto \varphi\circ\psi$ is continuous.
\item \textbf{Smoothness on the left:} For every $\psi\in \D^s(M)$, the mapping $R_\psi:\varphi\mapsto\varphi\circ\psi$ is smooth.
\item \textbf{Smoothness on the right:} For every $k\in \N\setminus\{0\}$, the mappings
\begin{equation}\label{def_xi}
\begin{array}{rclcrcl}
\mathcal{D}^{s+k}(M)\times\D^s(M)&\longrightarrow& \D^s(M)&\quad & \Gamma^{s+k}(TM)\times\D^s(M) &\longrightarrow &T\D^s(M) \\
(\varphi,\psi)&\longmapsto&\varphi\circ \psi&\quad &(X,\psi)&\longmapsto&X\circ\psi
\end{array}
\end{equation}
are of class $\mathcal{C}^{k}$.
\end{enumerate}
\begin{remark} In particular, the intersection $\D^\infty(M)$ of the sequence $(\mathcal{D}^s(M))_{s>d/2+1}$ is equipped with an ILH Lie group structure, on which composition of diffeomorphisms is smooth (and the above conditions are still valid for $s=\infty$) \cite{ES,O,S}. 
\end{remark}

\paragraph{Curves on $\mathcal{D}^s(M)$.} Let $s$ be an integer greater than $d/2+1$. 
For every $\varphi(\cdot)\in H^1(0,1;\mathcal{D}^s(M))$, define the \emph{logarithmic velocity} of $\varphi(\cdot)$ by 
$$
X(\cdot) = {\dot\varphi(\cdot)}\circ\varphi(\cdot)^{-1} \in L^2(0,1;\Gamma^s(TM)).
$$
Note that, by construction, we have $\dot\varphi(t) = X(t)\circ\varphi(t)$ for almost every $t\in[0,1]$.

In other words, any curve $\varphi(\cdot)\in H^1(0,1;\mathcal{D}^s(M))$ of diffeomorphisms is the flow of a time-dependent right-invariant vector field on $\mathcal{D}^s(M)$ with square-integrable $H^s$-norm. Conversely, thanks to the measurable version of the Cauchy-Lipschitz theorem (see, e.g.,  \cite{TBOOK}), any time-dependent vector field $X(\cdot)\in L^2(0,1;\Gamma^s(TM))$ generates a unique flow $\varphi^X(\cdot)\in H^1(0,1;\mathcal{D}^s(M))$ such that $\varphi^X(0)=e$, and therefore a unique curve $\varphi(\cdot)=\varphi^X(\cdot)\circ\varphi_0$ for any fixed intitial condition $\varphi_0\in\mathcal{D}^s(M)$. A group that satisfies such properties is called \textbf{regular} \cite{KMBOOK}. In other words, $\D^s(M)$ satisfies the additional property:
\vspace{2mm}

{4. \textbf{Regularity:} For any $\varphi_0\in\mathcal{D}^s(M)$ and $X(\cdot)\in L^2(\Gamma^s(TM)$, there is a unique curve $\varphi(\cdot)\in$

\hspace{4.2mm}$H^1(0,1;\D^s(M))$ such that $\varphi(0)=\varphi_0$ and $\dot{\varphi}(t)=X(t)\circ\varphi(t)$ almost everywhere on $[0,1]$.}

\subsection{Sub-Riemannian structures on $\mathcal{D}^s(M)$.}\label{sec_SRstructure}
Let us first define general sub-Riemannian structures on a Banach manifold.
\begin{definition}[\cite{A1}]
Let $\mathcal{M}$ be a smooth Hilbert manifold. A strong sub-Riemannian structure of class $\mathcal{C}^k$ on $\mathcal{M}$, $k\in \N\cup\{\infty\}$, is a triple $(\mathcal{E},g,\xi)$, where $\pi:\mathcal{E}\rightarrow M$ is a smooth Hilbert vector bundle on $M$ endowed with a smooth, strong  (i.e. fiberwise Hilbert) Riemannian metric $g$, and $\xi:\mathcal{E}\rightarrow TM$ a vector bundle morphism of class $\mathcal{C}^k$.

A \textbf{horizontal system} is a curve $t\in[0,1]\mapsto (q(t),u(t))\in \mathcal{E}$ of class $L^2$, such that its projection $t\mapsto q(t)$ to $\mathcal{M}$ is of Sobolev class $H^1$ and satisfies for almost every $t\in [0,1]$, $\dot{q}(t)=\xi_{q(t)}u(t)$. Its \textbf{length} and \textbf{action} are respectively defined by
$$
L(q,u)=\int_0^1\sqrt{g_{q(t)}(u(t),u(t))}dt,\quad A(q,u)=\frac{1}{2}\int_0^1g_{q(t)}(u(t),u(t))dt.
$$
A \textbf{horizontal curve} is the projection $q(\cdot)$ to $\mathcal{M}$ of a horizontal system. 
\end{definition}
Note that, if $\xi$ is one-to-one, there is a bijective correspondance between horizontal systems and horizontal curves.

\begin{definition} 
For $\mathcal{M}=\mathcal{D}^s(M)$, we can define strong right-invariant structures as follows: fix $V$ an arbitrary Hilbert space of vector fields with Hilbert product $\langle\cdot,\cdot\rangle$ and continuous inclusion in $\Gamma^{s+k}(TM),\ k\in \N$. The \textbf{sub-Riemannian structure} induced by $V$ on $\D^s(M)$ is 
$$
\mathrm{SR}_V(s)=(\D^s(M)\times V,\langle\cdot,\cdot\rangle,dR),
$$
where, for $(\varphi,X)\in\D^s(M)\times V$,
$$
dR_\varphi(e). X=dR_\varphi X=X\circ\varphi\ \in T_\varphi D^s(M).
$$
$\mathrm{SR}_V(s)$ is a strong sub-Riemannian structure of class $\mathcal{C}^k$. See \cite{AT} for various examples.
\end{definition}

\begin{remark}\label{rkhs}
Let $V^*$ denote the dual space of $V$, and $K_V:V^*\rightarrow V$ the inverse of the canonical isometry $X\in V\mapsto\langle X,\cdot\rangle\in V^*$. If $V$ has continuous inclusion in the space of continuous vector fields on $M$, then $K_V$ is given by convolution with a \emph{reproducing kernel} $K$. Such a kernel is a mapping $(x,y)\in M^2\mapsto K(x,y),$ with $K(x,y)$ an linear morphism from $T^*_yM$ to $T_xM$. It is defined by the property that for every $(y,p)\in T^*M,$ the vector field $x\mapsto K(x,y)p$ belongs to $V$ and
$$
\forall X\in V,\quad \langle K(\cdot,x)p,X\rangle=p(X(x)).
$$
Then, if an element $P$ of the dual $V^*$ can be represented by a one-form with (distributional) coefficients\footnote{For example, $\Gamma^{s}(TM)^*=\Gamma^{-s}(T^*M)$ can be identified with the space of one-forms on $M$ with coefficients in $H^{-s}(M)$. But since we assume that $V$ has continuous inclusion in $\Gamma^{s}(TM)$, any such 1-form also belongs to $V^*$.}  so that we can write
$$
P(X)=\int_M P(x)(X(x))dx,
$$
then the vector field $Y\in V$ given by
$$
Y(x)=\int_M K(x,y)P(y)dy
$$
satisfies
$$
\forall X\in V,\quad \langle Y,X\rangle=P(X).
$$
Conversely, such a mapping $K$ is the reproducing kernel of a unique Hilbert space of vector fields with continuous inclusion in the space of continuous vector fields if and only if it satisfies
$$
\iint_{M\times M} d\nu(x)(K(x,y)d\nu(y))=0\ \Rightarrow \forall x\in M,\ \int_{M}K(x,y)d\nu(y)=0
$$
for every compactly supported 1-form  $\nu$ with coefficients in the space of Radon measures on $M$ \cite{YBOOK}.
\end{remark}

\paragraph{Horizontal curves and end-point mapping.} The flow $\varphi^X(\cdot)$ of any $X(\cdot)\in L^2(0,1;V)$ is a horizontal curve for the structure defined in the previous paragraph, and conversely any horizontal curve $\varphi(\cdot)\in H^1(0,1;\D^s(M))$ with $\varphi(0)=e$ is  the flow of $X(\cdot)=\dot{\varphi}(\cdot)\circ\varphi^{-1}(\cdot)\in L^2(0,1;V)$. Hence, we can identify the set of all such horizontal curves with the Hilbert space $L^2(0,1;V)$ through the correspondance $X\leftrightarrow\varphi^X$. Note that the length and action of such a curve are given respectively by
$$
\begin{aligned}
L(\varphi^X(\cdot))&=L(X(\cdot))=\int_0^1\sqrt{\langle X(t),X(t)\rangle} dt,\\ A(\varphi^X(\cdot))&=A(X(\cdot))=\frac{1}{2}\int_0^1{\langle X(t),X(t)\rangle} dt.
\end{aligned}
$$

The endpoint mapping $\mathrm{end}:L^2(0,1;V)\rightarrow\D^s(M)$ is given by $\mathrm{end}(X(\cdot))=\varphi^X(1)$ and is of class $\mathcal{C}^k$ (recall that $k$ is such that $V\hookrightarrow \Gamma^{s+k}(TM)$ is a continuous inclusion). 


\paragraph{Sub-Riemannian distance.}
The \emph{sub-Riemannian distance} $d_{SR}$ between two elements $\varphi_0$ and $\varphi_1$ of $\mathcal{D}^s(M)$ is defined as the infimum of the length of horizontal curves steering $\varphi_0$ to $\varphi_1$, with the agreement that $d_{SR}(\varphi_0,\varphi_1)=+\infty$ whenever there is no horizontal curve steering $\varphi_0$ to $\varphi_1$. A curve $\varphi(\cdot):[0,1]\rightarrow \mathcal{D}^s(M)$ is said to be \emph{minimizing} if $d_{SR}(\varphi(0),\varphi(1)) = L(\varphi(\cdot))$. As usual, a curve minimizing the energy among all other curves with the same endpoints also minimizes the length.

\begin{proposition}\label{propo:finertopo} The sub-Riemannian distance $d_{SR}$ is indeed a distance (taking its values in $[0,+\infty]$), that is, $d_{SR}(\varphi_0,\varphi_1)=0$ implies $\varphi_0=\varphi_1$. Moreover, the corresponding topology on $\D^s(M)$ if at least as fine as the intrinsic manifold topology.
\end{proposition}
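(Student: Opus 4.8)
The plan is to establish the elementary properties of $d_{SR}$ by hand, to prove the topological assertion by combining the right-invariance of $d_{SR}$ with the continuity of the endpoint mapping recalled above, and then to deduce the positivity of $d_{SR}$ from the topological assertion.

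The elementary properties are quick. By definition $d_{SR}$ is valued in $[0,+\infty]$ (an infimum of nonnegative numbers, with value $+\infty$ when no horizontal curve joins the two points); $d_{SR}(\varphi,\varphi)=0$ because the constant curve is horizontal with zero logarithmic velocity; $d_{SR}$ is symmetric because the time-reversed curve $t\mapsto\varphi(1-t)$ of a horizontal curve is again horizontal, with logarithmic velocity $-X(1-t)\in V$ and the same length; and the triangle inequality follows by concatenating two horizontal curves after affine reparametrisations of their time intervals, the length being additive. Crucially, $d_{SR}$ is \emph{right-invariant}: if $\varphi(\cdot)$ is horizontal with logarithmic velocity $X(\cdot)$ and $\psi\in\D^s(M)$, then $\varphi(\cdot)\circ\psi$ is an $H^1$ curve (smoothness of $R_\psi$) with $\frac{d}{dt}(\varphi(t)\circ\psi)=X(t)\circ(\varphi(t)\circ\psi)$, hence horizontal with the same logarithmic velocity and the same length; taking infima, $d_{SR}(\varphi_0\circ\psi,\varphi_1\circ\psi)=d_{SR}(\varphi_0,\varphi_1)$. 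Thus $d_{SR}$ is an extended-real-valued pseudo-distance, and the balls $B_{SR}(\varphi,\epsilon)=\{\psi:\ d_{SR}(\psi,\varphi)<\epsilon\}$ form a basis for the topology it induces.

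For the topological statement, I would first reduce to a neighbourhood of $e$. It suffices to show that for every $\varphi$ and every manifold-neighbourhood $W$ of $\varphi$, some ball $B_{SR}(\varphi,\epsilon)$ lies in $W$. By right-invariance $B_{SR}(\varphi,\epsilon)=R_\varphi(B_{SR}(e,\epsilon))$, and $R_\varphi$ is a homeomorphism for the manifold topology (smooth, with smooth inverse $R_{\varphi^{-1}}$), so the claim reduces to: every manifold-neighbourhood $U$ of $e$ contains some $B_{SR}(e,\epsilon)$. Now use that the endpoint mapping $\mathrm{end}\colon L^2(0,1;V)\to\D^s(M)$ is of class $\mathcal{C}^k$, in particular continuous, with $\mathrm{end}(0)=\varphi^0(1)=e$: choose $\delta>0$ with $\|X(\cdot)\|_{L^2(0,1;V)}<\delta\Rightarrow\mathrm{end}(X(\cdot))\in U$. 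If $d_{SR}(e,\varphi)<\delta$ and $\varphi\neq e$, there is $X(\cdot)\in L^2(0,1;V)$ with $\varphi^X(1)=\varphi$ and $L(X(\cdot))=\int_0^1\|X(t)\|_V\,dt<\delta$; reparametrising this horizontal curve by a constant multiple of arc length (the standard sub-Riemannian reparametrisation: replace $X(t)$ by $\dot\sigma(t)X(\sigma(t))$ for an appropriate increasing bijection $\sigma$ of $[0,1]$, after first discarding the times where $X$ vanishes; this changes neither $\varphi^X(1)$ nor the length) produces $\tilde X(\cdot)$ with $\varphi^{\tilde X}(1)=\varphi$ and $\|\tilde X(t)\|_V\equiv L(X(\cdot))$ for a.e.\ $t$, hence $\|\tilde X(\cdot)\|_{L^2(0,1;V)}=L(X(\cdot))<\delta$ and $\varphi=\mathrm{end}(\tilde X(\cdot))\in U$. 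This gives $B_{SR}(e,\delta)\subset U$ and completes the topological statement.

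Positivity then follows: the manifold topology is Hausdorff, hence so is the finer $d_{SR}$-topology, so $d_{SR}(\varphi_0,\varphi_1)=0$ with $\varphi_0\neq\varphi_1$ is impossible (separate them by disjoint $d_{SR}$-open sets, one containing a ball $B_{SR}(\varphi_1,\delta)\ni\varphi_0$). The only genuinely non-elementary input is the continuity ($\mathcal{C}^k$-regularity) of the endpoint mapping, taken from \cite{AT}; within the present argument, the steps needing care are the arc-length reparametrisation---which is what converts a bound on the length into a bound on the $L^2$-norm so that continuity of the endpoint map can be applied---and the verification that right translations are manifold homeomorphisms, which licenses the reduction to $e$. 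If instead one wished to avoid quoting the endpoint mapping, the main obstacle would be a Grönwall argument bounding $d_{H^s(M,M)}(e,\varphi^X(t))$ by a function of $\|X(\cdot)\|_{L^2(0,1;V)}$ that vanishes as the latter does, using the composition estimates underlying properties (2)--(3).
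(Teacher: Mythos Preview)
The paper does not actually prove this proposition: Section~\ref{sec2} is explicitly a summary of results from \cite{AT}, and Proposition~\ref{propo:finertopo} is stated there without argument. Your proof is correct and follows the standard route one would expect: right-invariance reduces the topological claim to a neighbourhood of $e$; continuity of the endpoint mapping at $0$, combined with the arc-length reparametrisation (which converts the length bound $L(X(\cdot))<\delta$ into the $L^2$-bound $\|\tilde X(\cdot)\|_{L^2}<\delta$ needed to apply that continuity), shows that small sub-Riemannian balls at $e$ lie inside manifold neighbourhoods; and positivity then follows from the Hausdorff property of the manifold topology. The two places you flag as needing care---the reparametrisation and the fact that $R_\varphi$ is a manifold homeomorphism---are handled correctly.

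For comparison, the paper's proof of the analogous shape-space statement (first paragraph of the proof of Theorem~\ref{th:complete2}) uses exactly the same skeleton: it pulls a manifold-open set back along the continuous map $R_q$ to a neighbourhood of $e$ in $\D^s(M)$, invokes the present proposition to fit a sub-Riemannian ball inside it, and pushes forward. So your argument is precisely the $\D^s(M)$ analogue of that proof, with the endpoint map playing the role of $R_q$.
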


We also have the following generalization of Trouv\'e's result \cite{T1}.

\begin{theorem}[Completeness of the distance\cite{AT}]\label{th:complete}
Any two elements $\varphi_0$ and $\varphi_1$ of $\mathcal{D}^s(M)$ with $d_{SR}(\varphi_0,\varphi_1)<+\infty$ can be connected by a minimizing horizontal curve, and $(\mathcal{D}^s(M),d_{SR})$ is a complete metric space.
\end{theorem}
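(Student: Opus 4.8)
The plan is to transport everything to the Hilbert space $L^2(0,1;V)$, via the identification of Section~\ref{sec_SRstructure} between horizontal curves and their logarithmic velocities. Since a curve minimizing the energy among curves with fixed endpoints also minimizes the length, and $\inf A=\tfrac12(\inf L)^2$ by Cauchy--Schwarz and arclength reparametrization, the statement splits into two parts: (i) for $\varphi_0,\varphi_1$ with $d_{SR}(\varphi_0,\varphi_1)<+\infty$, the infimum of $A(X(\cdot))=\tfrac12\|X(\cdot)\|_{L^2(0,1;V)}^2$ over all $X(\cdot)\in L^2(0,1;V)$ whose flow satisfies $\varphi^X(1)\circ\varphi_0=\varphi_1$ is attained; and (ii) every $d_{SR}$-Cauchy sequence in $\mathcal{D}^s(M)$ converges, the fact that $d_{SR}$ is a genuine distance being already granted by Proposition~\ref{propo:finertopo}.

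For (i) I would run the direct method of the calculus of variations. Take a minimizing sequence $X_n(\cdot)$; it is bounded in the Hilbert space $L^2(0,1;V)$, hence, along a subsequence, converges weakly to some $X(\cdot)\in L^2(0,1;V)$, and weak lower semicontinuity of the squared norm gives $A(X(\cdot))\leq\liminf_n A(X_n(\cdot))=\tfrac12 d_{SR}(\varphi_0,\varphi_1)^2$. The only remaining point — and the heart of the matter — is that $X(\cdot)$ is still admissible, i.e. that $\varphi^X(1)\circ\varphi_0=\varphi_1$. This follows from a continuity property of the flow map: if $X_n(\cdot)$ converges weakly to $X(\cdot)$ in $L^2(0,1;V)$, then $\varphi^{X_n}(1)$ converges to $\varphi^X(1)$ (it is in fact enough to prove the weaker statement that $\varphi^{X_n}(1)$ converges weakly to $\varphi^X(1)$ in $H^s(M,M)$, because then the constraint $\varphi^{X_n}(1)\circ\varphi_0=\varphi_1$, being independent of $n$, passes to the limit). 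I expect this to be the main obstacle. To establish it I would derive from the continuous inclusion $V\hookrightarrow\Gamma^{s+k}(TM)$ and the bounded geometry of $M$ uniform-in-$n$ a priori bounds for $\varphi^{X_n}$ (and its spatial derivatives) by Grönwall estimates applied to $\dot\varphi^{X_n}=X_n\circ\varphi^{X_n}$; deduce compactness of $\{\varphi^{X_n}(t)\}$ in $C^0_{\mathrm{loc}}$ via Arzelà--Ascoli, upgraded to the $\mathcal{D}^s(M)$-topology by an interpolation inequality; and identify any limit point with $\varphi^X$ by passing to the weak limit in the integral form of the flow equation in local charts, using that pointwise evaluation of a vector field is a bounded linear functional on $V$. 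The resulting minimizer of $A$ is the desired minimizing horizontal curve.

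For (ii), let $(\varphi_n)$ be $d_{SR}$-Cauchy; passing to a subsequence we may assume $d_{SR}(\varphi_n,\varphi_{n+1})\leq 2^{-n}$ and (discarding repetitions) that the $\varphi_n$ are pairwise distinct. For each $n$ pick a horizontal curve from $\varphi_n$ to $\varphi_{n+1}$ of length $\ell_n\leq 2^{-n+1}$, reparametrized to have constant speed $\ell_n$ on $[0,1]$. Concatenate these pieces into one curve $\varphi(\cdot)$ on $[0,1)$ by placing the $n$-th piece on a subinterval $I_n$ with $\sum_n|I_n|=1$; the $n$-th piece then contributes $\ell_n^2/|I_n|$ to the squared $L^2$-norm of the logarithmic velocity of $\varphi(\cdot)$, so the choice $|I_n|=\ell_n/L$ with $L=\sum_m\ell_m<+\infty$ yields total $\sum_n\ell_n L=L^2<+\infty$, whence the logarithmic velocity $X(\cdot)$ of $\varphi(\cdot)$ lies in $L^2(0,1;V)$. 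By the regularity property of $\mathcal{D}^s(M)$ (property~4, with initial condition $\varphi_1$), $X(\cdot)$ has a flow which, by uniqueness on each subinterval, coincides with $\varphi(\cdot)$ and lies in $H^1(0,1;\mathcal{D}^s(M))\subset C^0([0,1];\mathcal{D}^s(M))$. Hence $\varphi(t)$ extends continuously to $t=1$, so $\varphi_n=\varphi(t_{n-1})$ converges in $\mathcal{D}^s(M)$ to $\varphi_\infty:=\varphi(1)$; and since the tail $\varphi(\cdot)$ from $t_{n-1}$ to $1$ is a horizontal curve from $\varphi_n$ to $\varphi_\infty$ of length $\sum_{m\geq n}\ell_m$, we get $d_{SR}(\varphi_n,\varphi_\infty)\leq\sum_{m\geq n}\ell_m\to 0$. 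Combined with the Cauchy property, the whole sequence $d_{SR}$-converges to $\varphi_\infty$, which proves completeness.

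Two points deserve care. In step (ii) one should verify that a finite-length horizontal curve on a compact interval can indeed be reparametrized to constant (hence bounded, hence $L^2$) speed while staying horizontal, which is routine once arclength reparametrization is set up, collapsing the subintervals where the velocity vanishes. In step (i), when $k=0$ there is no derivative to spare in the embedding $V\hookrightarrow\Gamma^{s}(TM)$, so the compactness argument must be run directly in $\mathcal{D}^s(M)$, exploiting only weak $H^s$-convergence of the endpoints rather than strong convergence of the flows; this is precisely why I phrased the key lemma through its weak form.
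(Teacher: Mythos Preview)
The paper does not prove Theorem~\ref{th:complete}: it is quoted from \cite{AT} as part of the summary in Section~\ref{sec2}, so there is no in-paper proof to compare against. That said, the proof of the analogous shape-space result, Theorem~\ref{th:complete2}, reveals the intended mechanism: one extracts a weakly convergent subsequence of controls in $L^2(0,1;V)$, then invokes ``Lemma~2 from \cite{AT}'' to get strong $H^s$-convergence of the time-$1$ flows on compact subsets of $M$, and concludes by weak lower semicontinuity of the norm.

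Your plan for part~(i) is precisely this direct-method argument, and you correctly isolate the weak-to-strong continuity of $X(\cdot)\mapsto\varphi^X(1)$ as the crux. Your outline for proving it---uniform Gr\"onwall bounds from $V\hookrightarrow\Gamma^{s+k}(TM)$, Arzel\`a--Ascoli, then identification of the limit via the integral flow equation---is the standard route and matches what \cite{AT} does. One caution: your fallback to weak convergence of $\varphi^{X_n}(1)$ in $H^s(M,M)$ when $k=0$ needs care, since $H^s(M,M)$ is a manifold, not a linear space; you would have to argue in a fixed chart around $\varphi_1$ and check that the $\varphi^{X_n}(1)$ eventually land there, which is automatic here only because they equal $\varphi_1$. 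In practice \cite{AT} obtains strong local convergence even for $k=0$ by exploiting that weak $L^2(0,1;V)$-convergence already controls enough regularity, so the distinction is moot.

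Your part~(ii) is a clean and correct concatenation argument; the paper (and \cite{AT}) does not spell this out, but it is the natural companion to geodesic completeness. Overall the proposal is sound and aligned with the approach behind the cited result.
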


\subsection{Geodesic flow on $\mathcal{D}^s(M)$}\label{sec_geod}

We keep the framework and notations used in the previous sections, but we assume $k\geq1$. A \textit{geodesic} $\varphi^X(\cdot)$ starting from $e$ is a critical point of the action $A(X(\cdot))$ among all curves $\varphi^Y(\cdot)$ with the same endpoints. In other words, for every $\mathcal{C}^1$ variation $a\in(-\varepsilon,\varepsilon)\mapsto X_a(\cdot)\in L^2(0,1;V)$ such that $\mathrm{end}(X_a(\cdot))=\varphi_1$, and $X_0=X$, we have
$$
\partial_a(A(X_a(\cdot))_{\vert a=0}=0.
$$

\paragraph{The three kinds of geodesics.} 
It easy to see that for any such curve, the couple of linear operators 
$$
(\d A(X(\cdot)),\d\,\mathrm{end}(X(\cdot))):L^2(0,1;V)\rightarrow\R\times T_{\varphi_1}\D^s(M)
$$
is \textbf{not} onto. This can imply one of two possibilities:
\begin{enumerate}
\item There exists $(\lambda,P_{1})\in \R\times T^*_{\varphi_1}\mathcal{D}^s(M)\setminus\{(0,0)\}$ such that 
\begin{equation}\label{Lag_mult}
\d\,\mathrm{end}(X(\cdot))^*P_{1} + \lambda \d A(X(\cdot)) =0.
\end{equation}
This is a Lagrange multipliers relation, and can be used to give a Hamiltonian caracterization of $\varphi^X(\cdot)$. We have two subcases:
\begin{enumerate}
\item \emph{Normal case:} $\lambda\neq 0$. In that case, we can take $\lambda=-1$, so that $\d A(X(\cdot)) = \d\,\mathrm{end}(X(\cdot))^*P_{1}$. We say that $\varphi^X(\cdot)$ is a \textit{normal geodesic}.

Note that, conversely, for any $X(\cdot)\in L^2(0,1;V),$ if we have $\d A(X(\cdot)) = \d\,\mathrm{end}(X(\cdot))^*P_{1}$ for some $P_1$, then $\varphi^X(\cdot)$ is indeed clearly a geodesic.
\item \emph{Abnormal case:} $\lambda=0$. In that case, \eqref{Lag_mult} implies that $\varphi^X(\cdot)$ is a singular curve. We say that $\varphi^X(\cdot)$ is an \textit{abnormal geodesic}.
\end{enumerate}

\item The image of $(\d A(X(\cdot)),\d\,\mathrm{end}(X(\cdot)))$ is a proper dense subset of $T_{\varphi_1}\D^s(M)$. There are no Lagrange multipliers. We say that $\varphi^X(\cdot)$ is an \textit{elusive geodesic}. No Hamiltonian caracterization can be given.
\end{enumerate}

Note that this last case only happens because we work in infinite dimensions, and is a well-known problem in the community of control theory \cite{HeintzeLiu_AM1999,Kurcyusz,LiYong}. It is caused by a topological incompatibility between the manifold topology of $\D^s(M)$ and that of the sub-Riemannian distance $d_{SR}$. See \cite{A1,AT} for a more comprehensive discussion on the appearance of elusive geodesics and their consequences. Also see \cite{ChitourJeanTrelatJDG,ChitourJeanTrelatSICON,Bellaiche,MRBOOK,RT} for more on abnormal geodesics.

\begin{remark}
When studying shape spaces, $V$ is usually a dense subset of $\Gamma^s(TM)$. This implies that $\d\,\mathrm{end}(X(\cdot))$ has at least dense for all $X(\cdot)$. In that case, there are no abnormal geodesics, only normal and elusive ones.
\end{remark}

\paragraph{Hamiltonian formulation.} For $\lambda\in\{0,1\}$, define the Hamiltonian of the motion $H_\lambda(\varphi,P,X)=P(X\circ\varphi)-\frac{\lambda}{2}\langle X,X\rangle$, with $(\varphi,P)\in T^*\D^s(M)$ and $X\in V$. Then one can prove the following \cite{AT} partial Pontryagin maximum principle \cite{P}.
\begin{proposition}\label{prop:hamiltcardiffeo}
Let $\varphi(\cdot)$ be the flow of $X(\cdot)\in L^2(0,1;V)$, with $\varphi(1)=\varphi_1$, and $(\lambda,P_1)\in\{0,1\}\times T^*_{\varphi_1}\D^s(M)\setminus\{(0,0)\}$. Let $(\varphi(\cdot),P(\cdot))$ be the unique lift of $\varphi(\cdot)$ to $T^*\D^s(M)$ that solves the linear Cauchy problem $P(1)=P_1$ and $\dot{P}(t)=-\partial_\varphi H_\lambda(\varphi(t),P(t),X(t))$ for almost every $t\in[0,1]$. Then 
\begin{equation}\label{hamil}
\lambda \d A(\varphi(\cdot)) = \d\,\mathrm{end}(X(\cdot))^*P_{1}\quad \Longleftrightarrow\quad K_VdR_{\varphi(t)}^*P(t)=\lambda X(t)\ \ \text{a.e.}\ t\in[0,1].
\end{equation}
In this case, we call $P(\cdot)$ a \textbf{singular covector} when $\lambda=0$ and a \textbf{normal covector} when $\lambda=1$.
\end{proposition}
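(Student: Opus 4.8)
The statement is the Lagrange multiplier rule (Pontryagin maximum principle) for the control problem whose state is $\varphi(\cdot)\in\D^s(M)$, whose control is $X(\cdot)\in L^2(0,1;V)$, whose dynamics is $\dot\varphi=dR_\varphi X=X\circ\varphi$, and whose cost is $A(X(\cdot))$ with fixed endpoints. The plan is to write down explicit formulas for the three linear maps appearing in \eqref{hamil}: the differential $\d\,\mathrm{end}(X(\cdot))$ of the endpoint map, its transpose $\d\,\mathrm{end}(X(\cdot))^*$, and the differential $\d A(X(\cdot))$; and then to observe that, once the lift $P(\cdot)$ of the statement is identified with the backward propagation of $P_1$ along the linearized flow, the two sides of \eqref{hamil} are literally the same identity, read once in $L^2(0,1;V^*)\cong L^2(0,1;V)^*$ and once in $L^2(0,1;V)$. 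Note that the non-surjectivity of $(\d A(X(\cdot)),\d\,\mathrm{end}(X(\cdot)))$ recalled above plays no role here: we do not claim existence of a multiplier $(\lambda,P_1)$, only the stated equivalence for a prescribed one.

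First I would differentiate the endpoint map. Given a variation $a\mapsto X_a(\cdot)=X(\cdot)+aY(\cdot)+o(a)$ in $L^2(0,1;V)$ with flows $\varphi_a(\cdot)$, differentiating $\dot\varphi_a=X_a\circ\varphi_a$ at $a=0$ shows that $\delta\varphi(\cdot):=\partial_a\varphi_a(\cdot)|_{a=0}$, a curve in $T\D^s(M)$ along $\varphi(\cdot)$, solves the linear Cauchy problem
$$
\dot{\delta\varphi}(t)=A(t)\,\delta\varphi(t)+Y(t)\circ\varphi(t),\qquad A(t):=\partial_\varphi\big(X(t)\circ\varphi(t)\big),\qquad \delta\varphi(0)=0.
$$
By the measurable Cauchy--Lipschitz theorem the homogeneous equation $\dot u=A(t)u$ has a well-defined resolvent; let $\Phi_t:T_{\varphi(t)}\D^s(M)\to T_{\varphi(1)}\D^s(M)$ denote the operator propagating data at time $t$ to time $1$, so that $\Phi_1=\mathrm{Id}$. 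Variation of constants then gives
$$
\d\,\mathrm{end}(X(\cdot))\,Y=\delta\varphi(1)=\int_0^1\Phi_t\big(Y(t)\circ\varphi(t)\big)\,\d t=\int_0^1\Phi_t\,dR_{\varphi(t)}Y(t)\,\d t.
$$
This step relies on the $\mathcal{C}^k$-regularity of $\mathrm{end}:L^2(0,1;V)\to\D^s(M)$ and on the derivative loss incurred when differentiating the composition map $(X,\varphi)\mapsto X\circ\varphi$, which is the reason for the hypothesis $V\hookrightarrow\Gamma^{s+k}(TM)$ with $k\ge1$; both are provided by \cite{AT}.

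Next I would dualize and recognize the costate equation. For $P_1\in T^*_{\varphi_1}\D^s(M)$,
$$
\big(\d\,\mathrm{end}(X(\cdot))^*P_1\big)(Y)=\int_0^1\big(\Phi_t^*P_1\big)\big(dR_{\varphi(t)}Y(t)\big)\,\d t=\int_0^1\big(dR_{\varphi(t)}^*\,\Phi_t^*P_1\big)(Y(t))\,\d t,
$$
so, putting $P(t):=\Phi_t^*P_1$, the functional $\d\,\mathrm{end}(X(\cdot))^*P_1\in L^2(0,1;V)^*\cong L^2(0,1;V^*)$ is the class of $t\mapsto dR_{\varphi(t)}^*P(t)$ (which lands in $V^*$ because $V\hookrightarrow\Gamma^s(TM)$ continuously). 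It remains to see that this $P(\cdot)$ is exactly the lift of the statement: since $H_\lambda$ depends on $\varphi$ only through the term $P(X\circ\varphi)$, one has $\partial_\varphi H_\lambda(\varphi(t),P(t),X(t))=A(t)^*P(t)$, and differentiating $t\mapsto\Phi_t^*P_1$ — equivalently, applying the fundamental theorem of calculus to $t\mapsto P(t)(\delta\varphi(t))$, whose derivative is a.e. $P(t)(Y(t)\circ\varphi(t))$ — shows $P(1)=P_1$ and $\dot P(t)=-A(t)^*P(t)=-\partial_\varphi H_\lambda(\varphi(t),P(t),X(t))$; uniqueness of the solution of this linear Cauchy problem (again by measurable Cauchy--Lipschitz) identifies it with the lift in the statement. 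Finally, $\d A(X(\cdot))\,Y=\int_0^1\langle X(t),Y(t)\rangle\,\d t$, so $\d A(X(\cdot))\in L^2(0,1;V^*)$ is the class of $t\mapsto\langle X(t),\cdot\rangle=K_V^{-1}X(t)$, where $K_V^{-1}:V\to V^*$ is the canonical isometry.

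With these formulas in hand the equivalence is immediate. The identity $\lambda\,\d A(X(\cdot))=\d\,\mathrm{end}(X(\cdot))^*P_1$ in $L^2(0,1;V^*)$ means $\lambda K_V^{-1}X(t)=dR_{\varphi(t)}^*P(t)$ for almost every $t$; applying the isometry $K_V:V^*\to V$ to both sides, this is equivalent to $\lambda X(t)=K_V\,dR_{\varphi(t)}^*P(t)$ for almost every $t$, which is the right-hand side of \eqref{hamil}, and both implications are obtained at once from this chain of equalities. The main obstacle is not this algebra but the analytic content of the two middle steps: the differentiability of $\mathrm{end}$ and the variation-of-constants formula on the infinite-dimensional Hilbert manifold $\D^s(M)$ with only $L^1$-in-time, $H^s$-valued data, the bookkeeping of the derivative loss in the composition map (whence $V\hookrightarrow\Gamma^{s+k}(TM)$, $k\ge1$), and the measurable-time fundamental theorem of calculus for $t\mapsto P(t)(\delta\varphi(t))$; these are carried out in \cite{AT} (compare \cite{TBOOK} for the flow theory and \cite{P} for the finite-dimensional prototype).
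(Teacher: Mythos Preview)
The paper does not actually prove this proposition; it is stated with the preface ``Then one can prove the following \cite{AT} partial Pontryagin maximum principle \cite{P}'' and no proof is given in the text. So there is no in-paper argument to compare against. Your sketch is the standard adjoint-equation derivation of the PMP---differentiate the flow, solve the variational equation by variation of constants, transpose to identify the costate $P(t)=\Phi_t^*P_1$ with the solution of $\dot P=-A(t)^*P$, and read the identity $\lambda\,\d A=\d\,\mathrm{end}^*P_1$ pointwise in $L^2(0,1;V^*)$---and it is correct; this is precisely the route taken in the cited reference \cite{AT}, to which the paper defers.
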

In particular, a curve $\varphi(\cdot)$, flow of $X(\cdot)$, is singular if and only if it can be lifted to a curve $(\varphi(\cdot),P(\cdot))$ such that, for almost every $t\in[0,1]$,
$$
\dot{P}(t)=-(dX(t)\circ\varphi(t))^*P(t),\quad V\subset\ker\ P(t).
$$

On the other hand, define the normal Hamiltonian $H:T^*\D^s(M)\rightarrow\R$ by $$H(\varphi,P)=H_1(\varphi,P,K_VdR_\varphi^*P).$$ 
$H$ is of class at least $\mathcal{C}^k$. We see that normal geodesics are the projections to $\D^s(M)$ of solutions on $T^*\D^s(M)$ of the so-called \textit{Hamiltonian geodesic equation} $(\dot{\varphi},\dot{P})=(\partial_PH(\varphi,P),-\partial_\varphi H(\varphi,P))$. Note that, using the reproducing kernel $K$ of $V$, we have
$$
H(\varphi,P)=\frac{1}{2}\int_{M\times M}P(x)(K(\varphi(x),\varphi(y))P(y))\d x\d y ,
$$
so that we get the usual formulas
$$\begin{aligned}
\partial_PH(\varphi,P)(x)&=\int_{M}K(\varphi(x),\varphi(y))P(y)\d x\d y,\\
\partial_\varphi H(\varphi,P)(x)&=\frac{1}{2}\int_M\left(\partial_1K(\varphi(x),\varphi(y))P(y)\right)^*P(x)\d y+\frac{1}{2}\int_M(\partial_2K(\varphi(y),\varphi(x))P(x))^*P(y)\d y.
\end{aligned}
$$
Moreover, one easily checks that $(\varphi,P)\rightarrow(\partial_PH(\varphi,P),-\partial_\varphi H(\varphi,P))$ is a vector field of class $\mathcal{C}^{k-1}$ on $T^*\D^s(M)$. We obtain the following theorem.
\begin{theorem}
If $k\geq2$, then the Hamiltonian geodesic flow is well-defined, of class $\mathcal{C}^{k-1}$, and global. In other words, for every $(\varphi_0,P_0)\in T^*\D^s(M),$ there is a unique solution $(\varphi(\cdot),P(\cdot)):\R\rightarrow T^*\D^s(M)$ to the Cauchy problem
$$
(\varphi(0),P(0))=(\varphi_0,P_0),\quad (\dot{\varphi}(t),\dot{P}(t))=(\partial_PH(\varphi(t),P(t)),-\partial_\varphi H(\varphi(t),P(t)))\ \ \text{a.e.}\ t\in[0,1].
$$
This solution is of class $\mathcal{C}^k$ in time and $\mathcal{C}^{k-1}$ in the initial conditions $(\varphi_0,P_0)$. Moreover, any subarc of this solution projects, up to re-parametrization, to a normal geodesic on $\D^s(M)$. Conversely, any normal geodesic is the projection of such a solution.
\end{theorem}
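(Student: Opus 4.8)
The plan is to read the Cauchy problem as an ODE for the vector field $Z:(\varphi,P)\mapsto(\partial_PH(\varphi,P),-\partial_\varphi H(\varphi,P))$ on the Hilbert manifold $T^*\D^s(M)$, whose $\mathcal{C}^{k-1}$-regularity was recorded just above, and then to (i) get local well-posedness, (ii) bootstrap for time regularity, (iii) use conservation of $H$ together with the regularity property of $\D^s(M)$ to make the solution global, and (iv) identify the trajectories with normal geodesics via Proposition~\ref{prop:hamiltcardiffeo}. For (i): since $k\geq2$ the field $Z$ is at least $\mathcal{C}^1$, so the Cauchy--Lipschitz theorem for vector fields on Banach manifolds yields, for each $(\varphi_0,P_0)$, a unique maximal integral curve defined on an open interval about $0$, depending on $(t,\varphi_0,P_0)$ in a $\mathcal{C}^{k-1}$ fashion. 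For (ii), I would bootstrap on the equation itself: the solution $z(\cdot)$ is a priori $\mathcal{C}^1$, and $\dot z=Z\circ z$ with $Z\in\mathcal{C}^{k-1}$ forces $\dot z\in\mathcal{C}^1$, hence $z\in\mathcal{C}^2$; iterating, $z\in\mathcal{C}^{\min(k,j+1)}$ after $j$ steps, so $z\in\mathcal{C}^k$ in time (and $\mathcal{C}^\infty$ when $k=\infty$), while the joint/initial-condition regularity stays $\mathcal{C}^{k-1}$.

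The core of the argument is globality. Along a maximal solution set $X(t)=K_VdR_{\varphi(t)}^*P(t)\in V$; from the defining property of $K_V$ one checks that $H(\varphi,P)=\tfrac12\langle X,X\rangle$, and since $H$ is autonomous, $\tfrac{\d}{\d t}H(\varphi(t),P(t))=0$, so $\|X(t)\|_V$ is constant on the maximal interval. The continuous inclusion $V\hookrightarrow\Gamma^{s+k}(TM)$ then bounds $\|X(t)\|_{\Gamma^{s+k}}$ uniformly in $t$. Because $\dot\varphi(t)=\partial_PH(\varphi(t),P(t))=X(t)\circ\varphi(t)$, the curve $\varphi(\cdot)$ is the flow of the time-dependent field $X(\cdot)$, which on each finite interval lies in $L^2(0,T;\Gamma^s(TM))$ with controlled norm; by the regularity property of $\D^s(M)$ (Property~4) and the standard Gronwall estimate for such flows (as in \cite{AT}), $\varphi(t)$ remains in $\D^s(M)$ with $\|\varphi(t)\|_{H^s}$ bounded on compact time intervals. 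For the covector, by Proposition~\ref{prop:hamiltcardiffeo} — equivalently, by the chain rule together with the fact that $\partial_XH_1$ vanishes at $X=K_VdR_\varphi^*P$, so that $\partial_\varphi H(\varphi,P)=\partial_\varphi H_1(\varphi,P,X)$ with $X$ held fixed — the $P$-equation is the linear ODE $\dot P(t)=-(dX(t)\circ\varphi(t))^*P(t)$, whose coefficient has operator norm bounded in terms of $\|X(t)\|_{\Gamma^{s+1}}$ and $\|\varphi(t)\|_{H^s}$, both already under control; Gronwall then bounds $\|P(t)\|$ on compact intervals. Hence no component of the trajectory can blow up or leave $T^*\D^s(M)$ in finite time, so the escape lemma gives a global solution.

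Finally I would match these trajectories with normal geodesics. For a subarc, after reparametrizing to $[0,1]$ and right-translating by $\varphi(0)^{-1}$ so the curve starts at $e$, the triple $(\varphi,P,X)$ with $X=K_VdR_\varphi^*P$ satisfies exactly the hypotheses of Proposition~\ref{prop:hamiltcardiffeo} with $\lambda=1$: $\dot\varphi=\partial_PH_1(\varphi,P,X)=X\circ\varphi$, $\dot P=-\partial_\varphi H_1(\varphi,P,X)$, and $K_VdR_\varphi^*P=\lambda X$; so the equivalence \eqref{hamil} together with the remark following the ``normal case'' definition shows that $\varphi^X(\cdot)$ is a normal geodesic. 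Conversely, a normal geodesic $\varphi^X(\cdot)$ satisfies $\d A(X(\cdot))=\d\,\mathrm{end}(X(\cdot))^*P_1$ for some $P_1$; lifting it via Proposition~\ref{prop:hamiltcardiffeo} produces $(\varphi(\cdot),P(\cdot))$ with $K_VdR_{\varphi(t)}^*P(t)=X(t)$, whence $\dot\varphi=\partial_PH(\varphi,P)$ and, using once more the vanishing of $\partial_XH_1$, $\dot P=-\partial_\varphi H(\varphi,P)$, so $(\varphi,P)$ is a subarc of a Hamiltonian geodesic flow line, which globality extends to $\R$. I expect the delicate point to be the globality step — specifically, turning the conserved quantity $\|X(t)\|_V$ into uniform-on-compact-intervals bounds for $\varphi(t)$ in $H^s$ (staying inside $\D^s(M)$) and for $P(t)$ through the linear equation, so as to exclude finite-time escape; the remaining steps are Cauchy--Lipschitz on Banach manifolds, a routine regularity bootstrap, and direct appeals to Proposition~\ref{prop:hamiltcardiffeo}.
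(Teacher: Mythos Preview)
The paper does not actually prove this theorem: Section~\ref{sec2} is explicitly a summary of results from \cite{AT}, and the theorem is stated immediately after observing that the symplectic gradient is of class $\mathcal{C}^{k-1}$, with no further argument. So there is no ``paper's own proof'' to compare against; your proposal is a reconstruction of what the argument in \cite{AT} presumably looks like.

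That said, your outline is sound and almost certainly close to the intended proof. The key identities $H(\varphi,P)=\tfrac12\|X\|_V^2$ with $X=K_VdR_\varphi^*P$, and $\partial_\varphi H(\varphi,P)=(dX\circ\varphi)^*P$ via the vanishing of $\partial_XH_1$ at the maximizer, are exactly what makes the globality argument work: conservation of $H$ freezes $\|X(t)\|_V$, Property~4 (regularity) then yields the flow $\varphi^X$ on any finite interval, and linearity plus Gronwall controls $P$. The identification with normal geodesics via Proposition~\ref{prop:hamiltcardiffeo} is the right mechanism in both directions. One technical point worth tightening: the operator $\delta\varphi\mapsto dX(\varphi)\cdot\delta\varphi$ acts on $T_\varphi\D^s(M)$, whose $H^s$-norm is not right-invariant, so its operator norm genuinely depends on $\varphi$ (and $\varphi^{-1}$); you therefore need uniform-on-compact-intervals bounds on $\varphi(t)$ in $\D^s(M)$, not merely existence of the flow. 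These estimates are indeed available (they are the content of the flow lemmas in \cite{AT} that underlie Theorem~\ref{th:complete}), but you should cite them explicitly rather than fold them into ``standard Gronwall''.
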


\paragraph{Momentum formulation.}\label{momentum}
We define the \emph{momentum map} $\mu:T^*\mathcal{D}^s(M)\rightarrow\Gamma^{s}(TM)^*=\Gamma^{-s}(T^*M)$ by $\mu(\varphi,P)=(\d R_\varphi)^*.P$. This proposition is proven in \cite{AT}, and is connected to the EPDiff equation \cite{A,EM,HMR,MM}.

\begin{proposition}\label{prop_momentum}
Assume that $k\geq1$. Then $\varphi(\cdot)\in H^1(0,1;\mathcal{D}^s(M))$ be either a normal geodesic or a singular curve, with logarithmic velocity $X(\cdot)\in L^2(0,1;V)$, and let $P(\cdot)$ be a corresponding normal or singular covector. We denote by $\mu(t)=\mu(\varphi(t),P(t))$ the corresponding momentum along the trajectory. Then, for almost every $t\in[0,1]$, we have
$$
\dot{\mu}(t)=\mathrm{ad}^*_{X(t)}\mu(t)=-\mathcal{L}_{X(t)}\mu(t).
$$
Here, $\mathrm{ad}_X:\Gamma^{s+1}(TM)\rightarrow\Gamma^{s}(TM)$, with $\mathrm{ad}_XY=[X,Y]$, and $\mathcal{L}_X$ the Lie derivative with respect to $X$. As a consequence,
 $$
\mu(t)=\varphi(t)_*\mu(0),
$$
for every $t\in[0,1]$.
\end{proposition}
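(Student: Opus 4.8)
The plan is to differentiate the pairing that defines $\mu$ along the trajectory, substitute the adjoint equation satisfied by the covector, recognize the Lie bracket that appears, and finally deduce the pushforward identity from uniqueness for the resulting linear transport equation. The two inputs I will use repeatedly are that $\mu(\varphi(t),P(t))=(\d R_{\varphi(t)})^*P(t)$ means precisely $\mu(t)(Y)=P(t)(Y\circ\varphi(t))$ for test fields $Y$, and that \emph{in both the normal and the singular case} the covector solves $\dot P(t)=-\partial_\varphi H_\lambda(\varphi(t),P(t),X(t))=-(\d X(t)\circ\varphi(t))^*P(t)$. The latter holds uniformly in $\lambda$ because the term $-\frac{\lambda}{2}\langle X,X\rangle$ of $H_\lambda$ does not depend on $\varphi$, so $\partial_\varphi H_\lambda$ is the same for $\lambda=0$ and $\lambda=1$; in the normal case one also uses the envelope identity $\partial_X H_1=0$ at $X=K_V\,\d R_\varphi^*P$, which gives $\partial_\varphi H=\partial_\varphi H_1$ evaluated at that $X$.

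First I would fix $Y\in\Gamma^{s+1}(TM)$, a dense subspace of $\Gamma^{s}(TM)$, and differentiate $t\mapsto\mu(t)(Y)=P(t)(Y\circ\varphi(t))$ by the product rule; this is legitimate for a.e.\ $t$ because $t\mapsto P(t)$ is absolutely continuous, $t\mapsto\varphi(t)$ is of class $H^1$, and $\psi\mapsto Y\circ\psi$ is $\mathcal{C}^1$ from $\D^s(M)$ to $T\D^s(M)$ by the smoothness-on-the-right property with $k=1$. One term is $P(t)\big(\frac{\d}{\d t}[Y\circ\varphi(t)]\big)=P(t)\big(((\d Y)X(t))\circ\varphi(t)\big)$, using $\dot\varphi(t)=X(t)\circ\varphi(t)$; the other is $\dot P(t)(Y\circ\varphi(t))=-P(t)\big(((\d X(t))Y)\circ\varphi(t)\big)$, using the adjoint equation. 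Adding them and using $(\d Y)X-(\d X)Y=[X,Y]$ yields
$$
\frac{\d}{\d t}\,\mu(t)(Y)=P(t)\big([X(t),Y]\circ\varphi(t)\big)=\mu(t)\big(\mathrm{ad}_{X(t)}Y\big)=\big(\mathrm{ad}^*_{X(t)}\mu(t)\big)(Y),
$$
which is the claimed equation $\dot\mu(t)=\mathrm{ad}^*_{X(t)}\mu(t)$, understood in $\Gamma^{-(s+1)}(T^*M)$ since $\mathrm{ad}_{X(t)}$ only maps $\Gamma^{s+1}(TM)$ boundedly into $\Gamma^{s}(TM)$. The identity $\mathrm{ad}^*_X\mu=-\mathcal{L}_X\mu$ is then the usual integration by parts: for $Y$ smooth with compact support, $\langle\mathcal{L}_X\mu,Y\rangle+\langle\mu,[X,Y]\rangle=\int_M\mathcal{L}_X(\mu(Y))=\int_M\mathrm{div}(\langle\mu,Y\rangle X)=0$, and one extends to $\mu\in\Gamma^{-s}(T^*M)$ and $X\in V$ by density.

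For the pushforward formula I would set $\nu(t):=\varphi(t)_*\mu(0)$ and check that it solves the same Cauchy problem. Differentiating $\varphi(t)_*$ through the cocycle identity $\varphi(t+h)=\varphi_{t,t+h}\circ\varphi(t)$, where $\varphi_{t,t+h}$ is the flow of $X(\cdot)$ between times $t$ and $t+h$, together with the classical formula $\frac{\d}{\d h}\big|_{0}(\varphi_{t,t+h})_*\omega=-\mathcal{L}_{X(t)}\omega$ (valid at Lebesgue points $t$ of $X(\cdot)$), gives $\dot\nu(t)=-\mathcal{L}_{X(t)}\nu(t)=\mathrm{ad}^*_{X(t)}\nu(t)$, while $\nu(0)=\mu(0)$ since $\varphi(0)=e$. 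As $t\mapsto\mathrm{ad}^*_{X(t)}$ is a measurable family of bounded operators with $\|\mathrm{ad}^*_{X(t)}\|\lesssim\|X(t)\|_V\in L^2(0,1)$, Gr\"onwall's lemma gives uniqueness for this linear equation, whence $\mu(t)=\nu(t)=\varphi(t)_*\mu(0)$ for every $t\in[0,1]$.

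I expect the main obstacle to be not the short formal computation above but the functional-analytic bookkeeping that surrounds it. Differentiability of $t\mapsto\mu(t)(Y)$ forces the test field $Y$ to be one Sobolev degree smoother than $\Gamma^s(TM)$, and because the coadjoint action loses a derivative the transport equation genuinely lives in $\Gamma^{-(s+1)}(T^*M)$ rather than $\Gamma^{-s}(T^*M)$; consequently the precise meaning of $\varphi(t)_*\mu(0)$ and the Gr\"onwall uniqueness step have to be arranged in a single consistent weak topology. This is the delicate point carried out in detail in \cite{AT}; alternatively, when $V$ is dense in $\Gamma^s(TM)$, one may first establish everything for smooth momenta, where no derivatives are lost, and then pass to the limit using the continuity of $\mu(0)\mapsto\mu(t)$ and of $\varphi(t)_*$ in the relevant norms.
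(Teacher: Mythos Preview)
Your proposal is correct and follows the same approach as the paper: the proposition itself is not proved in the paper but cited from \cite{AT}, yet the paper does carry out the identical product-rule computation (in the reverse direction) in the proof of Proposition~\ref{prop_momentum2}, and again on the shape-space level in Proposition~\ref{prop:mom1}, obtaining the Lie bracket from the two terms $P(t)((\d Y.X(t))\circ\varphi(t))$ and $-P(t)((\d X(t).Y)\circ\varphi(t))$ exactly as you do. Your discussion of the functional-analytic bookkeeping (the loss of one derivative forcing the equation into $\Gamma^{-(s+1)}(T^*M)$, and the Gr\"onwall argument for the pushforward identity) is a welcome addition that the paper leaves to \cite{AT}.
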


In this paper, we are also interested in the converse.

\begin{proposition}\label{prop_momentum2}
We assume that $k\geq1$. Fix $\varphi(\cdot)\in H^1(0,1;\mathcal{D}^s(M))$ a horizontal curve with logarithmic velocity $X(\cdot)\in L^2(0,1;V)$. Let $\lambda\in\{0,1\}$ and $t\in[0,1]\mapsto \mu(t)\in \Gamma^s(TM)^*$ be continuous such that for almost every $t\in[0,1]$, $K_V\mu(t)=\lambda X(t)$ and $\dot{\mu}(t)=\mathrm{ad}^*_{X(t)}\mu(t)$.
Then $\varphi(\cdot)$ is a normal geodesic if $\lambda=1$ and a singular curve if $\lambda=0$, with covector $P(t)=(dR_{\varphi(t)}^*)^{-1}\mu(t).$
\end{proposition}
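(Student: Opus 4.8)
We prove the statement by producing the covector explicitly and matching it against Proposition~\ref{prop:hamiltcardiffeo}. By right-invariance of the structure we may assume $\varphi(0)=e$, so that $\varphi(\cdot)=\varphi^X(\cdot)$. Since right translations $R_\psi\colon\varphi\mapsto\varphi\circ\psi$ are smooth with smooth inverses $R_{\psi^{-1}}$ (Smoothness on the left), each $R_\psi$ is a diffeomorphism of $\D^s(M)$, hence $dR_{\varphi(t)}\colon\Gamma^s(TM)\to T_{\varphi(t)}\D^s(M)$ is a topological isomorphism and so is its transpose $dR^*_{\varphi(t)}\colon T^*_{\varphi(t)}\D^s(M)\to\Gamma^s(TM)^*$. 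Thus $P(t):=(dR^*_{\varphi(t)})^{-1}\mu(t)$ is a well-defined element of $T^*_{\varphi(t)}\D^s(M)$, and by construction $dR^*_{\varphi(t)}P(t)=\mu(t)$, so that $K_V\,dR^*_{\varphi(t)}P(t)=K_V\mu(t)=\lambda X(t)$ for a.e.\ $t$. This is exactly the right-hand side of the equivalence \eqref{hamil}, so it remains to check that $P(\cdot)$ is the associated lift of $\varphi(\cdot)$, i.e.\ that it solves the costate equation $\dot P(t)=-\partial_\varphi H_\lambda(\varphi(t),P(t),X(t))$. Since $H_\lambda(\varphi,P,X)=P(X\circ\varphi)-\tfrac{\lambda}{2}\langle X,X\rangle$ and the last term does not depend on $\varphi$, one computes $\partial_\varphi H_\lambda(\varphi,P,X)=(dX\circ\varphi)^*P$, so the claim to establish is $\dot P(t)=-(dX(t)\circ\varphi(t))^*P(t)$.

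Rather than differentiating $P(\cdot)$ directly --- its regularity is a little delicate, since $\mu(\cdot)$ is assumed only continuous --- I would argue by uniqueness. Let $\widetilde P(\cdot)$ be the unique solution along $\varphi(\cdot)$ of the linear Cauchy problem $\widetilde P(0)=\mu(0)$, $\dot{\widetilde P}(t)=-(dX(t)\circ\varphi(t))^*\widetilde P(t)$ (this is precisely the kind of lift appearing in Proposition~\ref{prop:hamiltcardiffeo}), and set $\widetilde\mu(t)=dR^*_{\varphi(t)}\widetilde P(t)$. Differentiating the identity $\widetilde\mu(t)(Y)=\widetilde P(t)(Y\circ\varphi(t))$ against smooth vector fields $Y$ and using $\dot\varphi(t)=X(t)\circ\varphi(t)$, one finds, exactly as in the proof of Proposition~\ref{prop_momentum} in \cite{AT}, that $\dot{\widetilde\mu}(t)=\mathrm{ad}^*_{X(t)}\widetilde\mu(t)$ with $\widetilde\mu(0)=\mu(0)$. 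Uniqueness for the momentum equation (Proposition~\ref{prop_momentum}, whose solution is $\varphi(t)_*\mu(0)$) then forces $\widetilde\mu\equiv\mu$, whence $\widetilde P(t)=(dR^*_{\varphi(t)})^{-1}\mu(t)=P(t)$. Therefore $P(\cdot)$ does solve the costate equation, and is the lift of $\varphi(\cdot)$ required by Proposition~\ref{prop:hamiltcardiffeo}, with terminal covector $P_1:=P(1)$.

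We can now read off the conclusion. If $\lambda=0$ we must check $(\lambda,P_1)\neq(0,0)$: since $\dot\mu=\mathrm{ad}^*_X\mu$ is linear, $\mu$ vanishing at one time forces $\mu\equiv0$, a trivial case we discard; thus $\mu$ never vanishes, and in particular $P_1\neq0$. Applying the right-to-left implication of \eqref{hamil} yields $\lambda\,\d A(X(\cdot))=\d\,\mathrm{end}(X(\cdot))^*P_1$. If $\lambda=1$, this is the relation $\d A(X(\cdot))=\d\,\mathrm{end}(X(\cdot))^*P_1$, which by the remark in the ``Normal case'' paragraph makes $\varphi(\cdot)$ a normal geodesic, with normal covector $P(\cdot)$. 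If $\lambda=0$, it reads $\d\,\mathrm{end}(X(\cdot))^*P_1=0$ with $P_1\neq0$, i.e.\ the Lagrange multiplier relation \eqref{Lag_mult} with $\lambda=0$, so $\varphi(\cdot)$ is a singular curve, with singular covector $P(\cdot)$.

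The only non-formal ingredient is the equivalence $\dot{\widetilde P}(t)=-(dX(t)\circ\varphi(t))^*\widetilde P(t)\Leftrightarrow\dot{\widetilde\mu}(t)=\mathrm{ad}^*_{X(t)}\widetilde\mu(t)$ together with the uniqueness for the momentum equation; both are established in \cite{AT} and are exactly what makes Proposition~\ref{prop_momentum} work, so the present argument is essentially that correspondence run in reverse and then fed into Proposition~\ref{prop:hamiltcardiffeo}. One should only keep in mind that $\mathrm{ad}^*_{X(t)}$ loses a derivative, so the momentum equation is to be read in $\Gamma^{-s-1}(T^*M)$ while $\mu(\cdot)$ stays bounded in $\Gamma^{-s}(T^*M)$ by continuity --- which causes no difficulty.
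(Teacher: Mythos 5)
Your proof is correct and follows essentially the same route as the paper's: define $P(t)=(dR^*_{\varphi(t)})^{-1}\mu(t)$, verify that it solves the costate equation $\dot P(t)=-(dX(t)\circ\varphi(t))^*P(t)$, and conclude via the equivalence \eqref{hamil} of Proposition \ref{prop:hamiltcardiffeo}. The only divergence is in the middle step: the paper differentiates $P(t)(Y\circ\varphi(t))=\mu(t)(Y)$ directly against $Y\in\Gamma^{s+1}(TM)$ and uses $\dot\mu=\mathrm{ad}^*_{X}\mu$ to obtain the costate equation on the dense subspace $\Gamma^{s+1}(TM)\circ\varphi(t)$, whereas you solve the costate Cauchy problem first, push forward to a momentum $\widetilde\mu$, and identify $\widetilde\mu$ with $\mu$ through the representation $\mu(t)=\varphi(t)_*\mu(0)$ --- the same computation read in the opposite direction, at the cost of invoking well-posedness of the momentum equation (which is indeed available from Proposition \ref{prop_momentum}). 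Your additional check that $P_1\neq0$ in the $\lambda=0$ case is a small point the paper's proof leaves implicit.
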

\begin{proof}
We already have $K_VdR_{\varphi(t)}^*P(t)=\lambda X(t)$ for almost every $t$. We just need to prove that $\dot{P}(t)=-\partial_\varphi H^\lambda(\varphi(t),P(t),X(t))=-(dX(t)\circ\varphi(t))^*P(t)$. Let $Y\in \Gamma^{s+1}(TM)$. We have
$$
\begin{aligned}
\dot{P}(t)(Y\circ\varphi(t))&=\frac{d}{dt}(\underset{=\mu(t)(Y)}{\underbrace{{P}(t)(Y\circ\varphi(t)))}}-{P}(t)(dY\circ\varphi(t).X(t)\circ\varphi(t))\\
&=\mu(t)([X(t),Y])-\mu(t)(dY.X(t))=-\mu(t)(dX(t).Y)\\
&=-P(t)(dX(t)\circ\varphi(t).Y\circ\varphi(t))\\
&=-(dX(t)\circ\varphi(t))^*P(t)(Y\circ\varphi(t)).
\end{aligned}
$$
Consequently, $\dot{P}(t)=-(dX(t)\circ\varphi(t))^*P(t)$ on $\Gamma^{s+1}(TM)\circ\varphi(t)$, and therefore on all of $T_\varphi \D^s(M)=\Gamma^{s}(TM)\circ\varphi(t)$ by density.
\end{proof}

\section{Shape spaces}\label{shsp}

In this section, we give the definition for abstract shape spaces in $M$ which allow the application of the usual LDDMM methods. We want the basic examples of shape spaces, such as spaces of landmarks, or spaces of embeddings of a fixed submanifold $N$, to be included in this definition. A common factor between those examples is that they are acted on by $\D^s(M)$, and this action satisfies properties that are very similar to the particular topological group structure of $\D^s(M)$. This action, along with the choice of a Hilbert space $V$ of vector fields of $M$, endowes the examples with a length structure (considered Riemannian structure in the literature, but which is often only a \emph{sub-Riemannian structure}), and the problem of comparing shapes can then be seen as a search for geodesics with respect to this structure. This leads to a Hamiltonian equation for the geodesic flow,  which in turn lead to various optimization methods \cite{TY2,YBOOK}.

Therefore, we define shape spaces as Banach manifolds on which the group of diffeomorphisms of $M$ acts in a way that is ``compatible" with this particular group structure. This then lets us project the sub-Riemannian structure induced on $\D^s(M)$ by an arbitrary Hilbert space of vector fields $V$ to a sub-Riemannian structure on the shape space itself, for which horizontal curves are exactly curves induced by flows of elements of $V$. We study the corresponding sub-Riemannian distance, and prove in particular the geodesic completeness of the structure. Then, we find the Hamiltonian geodesic flow and the singular curves on the shape space for this structure, and their connection with the Hamiltonian geodesic flow and the singular curves of $\D^s(M)$ itself. This allows the generalization  of the usual LDDMM methods and algorithms  to this broader category of shape spaces.

\subsection{Definition}
Throughout the section, $M$ is a smooth Riemannian manifold of dimension $d$ and bounded geometry. Let $s_0$ be the smallest integer such that $s_0>d/2$. A shape space in $M$ is a Banach manifold acted upon by $\D^s(M)$ for some $s$ in a way that is compatible with its particular topological group structure. 

\begin{definition}\label{shapespace}
Let  $\mathcal{S}$ be a Banach manifold and $\ell\in\N\setminus\{0\}$, and $s=s_0+\ell$. Assume that $\mathcal{D}^{s}(M)$ acts on $\s$, according to the action
\begin{equation}
\label{eq:groupaction}
\begin{array}{rcl}
\mathcal{D}^s(M)\times \mathcal{S}&\rightarrow& \mathcal{S}\\
(\varphi,q) &\mapsto& \varphi\cdot q
\end{array}
\end{equation}
We say that $\mathcal{S}$ is a shape space of order $\ell$ in $M$ if the following conditions are satisfied:
\begin{enumerate}
\item \textbf{Continuity:} $(\varphi,q)\mapsto\varphi\cdot q$ is continuous.
\item \textbf{Smoothness on the left:} For every $q\in \mathcal{S}$, the mapping $R_q:\varphi\mapsto\varphi\cdot q$ is smooth. Its differential at $e=\mathrm{Id}_{\R^d}$ is denoted $\xi_q$, and is called the \textbf{infinitesimal action} of $\Gamma^s(TM)$.
\item \textbf{Smoothness on the right:} For every $k\in\N$, the mappings
\begin{equation}
\begin{array}{rclcrcl}
\mathcal{D}^{s+k}(M)\times\mathcal{S}&\longrightarrow& \mathcal{S}&\quad &\xi : \Gamma^{s+k}(TM)\times\mathcal{S} &\longrightarrow &T\mathcal{S} \\
(\varphi,q)&\longmapsto&\varphi\cdot q&\quad &(X,q)&\longmapsto&\xi_qX
\end{array}
\end{equation}
are of class $\mathcal{C}^{k}$.
\item \textbf{Regularity:} For every $X(\cdot)\in L^2(0,1;\Gamma^s(TM))$ and $q_0\in \s$, there exists a unique curve $q(\cdot)=q^X(\cdot)\in H^1(0,1;\s)$ such that $q^X(0)=q_0$ and $\dot{q}^X(t)=\xi_{q^X(t)}X(t)$ for almost every $t$ in $[0,1]$.
\end{enumerate}
A an element $q$ of $\s$ is called a \textbf{state} of the shape.
\end{definition}
\begin{remark}
One can define shape spaces of order $\ell=0$ (see \cite{ATY}), but the action must still be performed by $\D^{s_0+1}(M)$.
\end{remark}

\begin{remark} 
$\mathcal{H}^{s_0+\ell}(M,M)$ is a shape space of order $\ell$ for the action by composition on the left. So is $\D^{s_0+\ell}(M)$ itself.
\end{remark}
\begin{remark} 
While we do not consider fractional order for shape spaces or  diffeomorphisms of fractional Sobolev regularity out of a desire to keep things more simple, all definitions and results can be straightforwardly extended to those cases.
\end{remark}

\begin{remark}The following properties immediately follow from the definition.
\begin{itemize}
\item For every $\varphi\in\mathcal{D}^{s+k}(M)$ and $X\in \Gamma^{s+k}(TM),$ the mappings $q\mapsto \varphi\circ q$ and $q\mapsto\xi_q(X)$ are of class $\mathcal{C}^{k}$. Moreover, $q\mapsto \xi_q$ is of class $\mathcal{C}^{k-1}$ (see \cite{O} for example).
\item A shape space of order $\ell$ is also a shape space of every order $\ell'>\ell$.
\end{itemize}
\end{remark}

\begin{definition}
We say that $q\in\mathcal{S}$ has \textbf{compact support} if there exists a compact subset $U$ of $M$ such that $R_q:\varphi\mapsto\varphi\cdot q$ is continuous with respect to the semi-distance $d_{H^{s_0+\ell}(U,M)}$ on $\D^s(M)$.
\end{definition}
In other words, $q$ has a compact support if $\varphi\cdot q$ depends only on the restriction of $\varphi$ to a compact subset $U$ of $M$.

\begin{example}\label{example1}
Here are some examples of the most widely used shape spaces:
\begin{enumerate}

\item Let $N$ be a smooth compact Riemannian manifold, and $\alpha_0$ be the smallest integer greater than $\dim(N)/2$. Then $\s=\mathrm{Emb}^{\alpha_0+\ell}(N,M)$ and $\mathrm{Emb}_{\ell}(N,M)$,  the manifolds of all embeddings $q:N\rightarrow M$ respectively of class $H^{\alpha_0+\ell}$ and $\mathcal{C}^\ell$, $\ell\in\N$, are shape spaces of order $\max(1,\ell)$. In this case, $\mathcal{D}^{s_0+\max(1,\ell)}(M)$ acts on $\mathcal{S}$ by left composition $\varphi\cdot q=\varphi\circ q$, and this action satisfies all the required properties of Definition \ref{shapespace} (see \cite{ATY} for the proof), with infinitesimal action $\xi_qX=X\circ q$. Every $q\in\s$ has compact support.

\item A particularly interesting case is obtained when $\dim(S)=0$. Then $N=\{a_1,\dots,a_n\}$ is simply a finite set. In that case, for any $\ell$, the shape space $\s=\mathcal{C}^\ell(S,M)$ is identified with the space of $n$ landmarks in $M$: 
$$
\mathrm{Lmk}_n(M)= \lbrace (x_1,\dots,x_n)\in M^n\ \vert\ x_i\neq x_j\ \text{if}\ i\neq j\rbrace.
$$
For $q=(x_1,\dots,x_n)$, the action of $\mathcal{D}^{s_0+1}(M)$ is given by $\varphi\cdot q=(\varphi(x_1),\dots,\varphi(x_n))$.
For a vector field $X$ of class $H^{s_0+1}$ on $M$, the infinitesimal action of $X$ at $q$ is given by $\xi_q(X)=(X(x_1),\dots,X(x_n))$. Spaces of landmarks are actually spaces of order $0$ (also see \cite{ATY}). 

\item Let $\mathcal{S}_1$ and $\mathcal{S}_2$ be shape spaces of respective orders $\ell_1$ and $\ell_2$. Then the space $\mathcal{S}_1\times \mathcal{S}_2 $ is a shape space of order $\max(\ell_1,\ell_2)$ for the diagonal action $\varphi\circ(q_1,q_2)=(\varphi\cdot q_1,\varphi\cdot q_2)$.

\item Let $\mathcal{S}$ be a shape space of order $\ell\in\N$. Then $T\mathcal{S}$ is a shape space of order $\ell+1$ for the action of $\mathcal{D}^{s_0+\ell+1}(M)$ on $T\mathcal{S}_1$ defined by $\varphi\cdot (q,v)=(\varphi\cdot q,\partial_q (\varphi\cdot q)(v))$.

\end{enumerate}
\end{example}

\begin{remark}
The first example above implies that $\mathrm{Emb}^{\infty}(\s,M)=\cap_{\ell}\mathrm{Emb}^{\alpha_0+\ell}(\s,M)$ is an ILH-manifold, on which $\mathcal{D}^{\infty}(M)$ acts smoothly as an ILH Lie group \cite{O}. Such a space can be called a shape space of infinite order.  They will be briefly studied in Section \ref{sec:equivgeod}.
\end{remark}

\paragraph{Deformation of a state.} A \textit{deformation} of a state $q_0$ is a curve $q(\cdot):[0,1]\rightarrow\s$ of the form
$$
\forall t\in[0,1],\quad q(t)=\varphi(t)\cdot q_0,
$$
where $\varphi(\cdot):[0,1]\rightarrow\mathcal{D}^s(M)$ and $\varphi(0)=e$. Such a $\varphi(\cdot)$ is called a \textit{lift} of the deformation $q(\cdot)$. Now if $\varphi(\cdot)$ is the flow of some vector field $X(\cdot)\in L^2(0,1;\Gamma^s(TM))$, the deformation $q(\cdot)=\varphi(\cdot)\circ q_0$ belongs to $H^1(0,1;\mathcal{S})$ and is the unique solution to the Cauchy problem
\begin{equation}\label{eqxi}
\dot q(t) = \xi_{q(t)}X(t) = X(t)\circ q(t),\quad q(0)=q_0,
\end{equation}
for almost every $t\in[0,1]$. In other words, using the notations of 4 in Definition \ref{shapespace}, for every $X\in L^2(0,1;V),$ and $t\in[0,1]$,
$$
q^X(t)=\varphi^X(t)\cdot q_0.
$$

\subsection{Sub-Riemannian structure on shape spaces}
Let $\mathcal{S}$ be a shape space of order $\ell\geq1$ in $M$, and fix $k\in \N\setminus\{0\}$. Consider $(V,\langle\cdot,\cdot\rangle)$ an arbitrary Hilbert space of vector fields with continuous inclusion in $\Gamma^{s_0+\ell+k}(TM)$. According to Section \ref{sec_SRstructure}, we obtain a strong right-invariant sub-Riemannian structure $\mathrm{SR}_V(s)$ of class $\mathcal{C}^k$ induced by $V$ on $\mathcal{D}^s(M)$, with $s=s_0+\ell$. 

\paragraph{The framework of shape and image matching.} The classical LDDMM algorithms for exact shape matching seek to minimize 
$$
\frac{1}{2}\int_0^1\left<X(t),X(t)\right>dt
$$
over every $X\in L^2(0,1;V)$ such that $\varphi^X(1)\cdot q_0=q_1$, where the template $q_0$ and the target $q_1$ are fixed. Usually, one only wants to get "close" to the target shape, which is accomplished by minimizing
$$
\frac{1}{2}\int_0^1\left<X(t),X(t)\right>dt+g(\varphi^X(1)\cdot q_0)
$$
over every $X\in L^2(0,1;V)$, where the endpoint constraint has been replaced with the addition of a data attachment term $ g(\varphi^X(1)\cdot q_0)$ in the functionnal \cite{GTY1,CTr}. The function $g$ is usually such that it reaches its minimum at $q_1$.

\paragraph{The sub-Riemannian structure.} The previous discussion leads us to define a sub-Riemannian structure on $\s$ as follows.

\begin{definition}\label{def:sr}
The strong sub-Riemannian structure induced by $V$ on $\s$ is the triple 
$$
\mathrm{SR}_V^\s=(\s\times V,\langle\cdot,\cdot\rangle,\xi),
$$
where $\xi$, in a slight abuse of notation, denotes the restriction to $\s\times V$ of the infinitesimal action $\xi:\s\times \Gamma^s(TM)\rightarrow T\s$. This structure is of class $\mathcal{C}^k$. Horizontal systems are couples 
$$
(q(\cdot),X(\cdot))=H^1(0,1;\s)\times L^2(0,1;V)
$$ 
such that  for almost every $t\in[0,1]$,
$$
\dot{q}(t)=\xi_{q(t)}X(t).
$$
The curve $q(\cdot)$ is called a \textbf{horizontal deformation} of $q(0)$. 
\end{definition}
\begin{remark}
If $\xi_q(V)=T_q\s$ for every $q\in\s$, this is actually a Riemannian structure. This is often the case in numerical simulations, where $\s$ is finite dimensional (usually a space of landmarks). However, in the general case, we \emph{do not} obtain a Riemannian structure.
\end{remark}
The \emph{length} and \emph{action} of a horizontal system $(q(\cdot),X(\cdot))$ only depend on $X(\cdot)$ and are respectively given by
$$
\begin{aligned}
L(q(\cdot),X(\cdot))&=L(X(\cdot))=\int_0^1\sqrt{\langle X(t),X(t)\rangle} dt,
\\
 A(q(\cdot),X(\cdot))&=A(X(\cdot))=\frac{1}{2}\int_0^1\langle X(t),X(t)\rangle dt.
\end{aligned}
$$
The LDDMM algorithm can therefore be formulated as a search for sub-Riemannian geodesics on $\s$ for the structure $\mathrm{SR}_V^\s$.

A horizontal system $(q(\cdot),X(\cdot))$ satisfies $q(\cdot)=\varphi^X(\cdot)\cdot q(0)$. Moreover, $\varphi^X(\cdot)$ is horizontal for $\mathrm{SR}_V(s)$ on $\D^s(M)$. The flow $\varphi^X(\cdot)$ is called a \textit{horizontal lift} of the curve $q(\cdot)$. This explains the term \textit{horizontal deformation} for $q(\cdot)$.

\subsection{Minimal lift of a horizontal curve.}\label{sec:minlift}

The length and action of a horizontal system $(q(\cdot),X(\cdot))\in \Omega^\s_V$ coincide with the sub-Riemannian length and action of the horizontal lift $\varphi^X(\cdot)$ on $\D^s(M)$. However, there may be two distinct controls $X_1(\cdot)\neq X_2(\cdot)$ inducing the same horizontal curve $q(\cdot)$. For example, if $\s=Lmk_n(M)$, and $q_0=(x_1,\dots,x_n)\in \s$, then the flow of any $X(\cdot)\in L^2(0,1;V)\setminus\{0\}$ such that 
$$
X(t,x_i)=0,\quad i=1,\dots,n,\  \ t\in[0,1],
$$
leaves $q_0$ fixed, and induces the constant deformation $t\mapsto q_0$. Since different controls can have different actions, it is natural to ask whether, given a horizontal deformation $q(\cdot)$, there exists a control $X(\cdot)$ with minimal action such that $q(\cdot)=\varphi^X(\cdot)\cdot q(0)$.

To answer this question, fix $q\in \s$. Then $\xi_q$ has a closed null set $\ker \xi_q\subset V$. $V$ is Hilbert, so $\ker\xi_q$ admits a unique orthogonal supplement $(\ker\xi_q)^\perp$ such that the restriction of $\xi_q$ to $(\ker\xi_q)^\perp$ is a bijection onto $\xi_q(V)$. The inverse $\xi_q^{-1}:\xi_q(V)\rightarrow (\ker\xi_q)^\perp$ of this bijection, denoted $\xi_q^{-1}$ in a slight abuse of notation, is the \textit{pseudoinverse} of $\xi_q$. This means that $\xi_q\xi_q^{-1}=\mathrm{id}_{\xi_q(V)}$, but that $\xi_q^{-1}\xi_q$ is the orthogonal projection $V\rightarrow (\ker\xi_q)^\perp$. 

Now, for every $w\in\xi_qV$ and $X\in V$ with $\xi_qV=w$, we have
\begin{equation}
\label{eq:mliftvec}
 \langle X,X\rangle=\min_{Y\in V,\ \xi_qY=w}(\langle Y,Y\rangle)\ \Longleftrightarrow\ X=\xi_q^{-1}w.
\end{equation}
We immediately obtain the following lemma.
\begin{lemma}\label{lem:minlift}
A horizontal curve $q(\cdot)\in H^1(0,1;\s)$ admits a unique control $X(\cdot)$  with minimal action $A(X(\cdot))$ such that $\dot{q}(t)=\xi_{q(t)}X(t)$, a.e. $t\in[0,1]$. This control is given for almost every $t\in[0,1]$ by
$$
X(t)=\xi_{q(t)}^{-1}\dot{q}(t).
$$
The corresponding horizontal lift $\varphi^X(\cdot)$ is called the \textbf{minimal lift} of $q(\cdot)$, and it is the horizontal curve on $\D^s(M)$ with minimal action such that
$$
q(\cdot)=\varphi^X(\cdot)\cdot q(0).
$$
\end{lemma}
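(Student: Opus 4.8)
The plan is to reduce everything to the pointwise statement \eqref{eq:mliftvec} applied along the curve, and then handle the two genuinely infinite-dimensional issues: measurability of $t\mapsto \xi_{q(t)}^{-1}\dot q(t)$, and the fact that this candidate control really does lie in $L^2(0,1;V)$. First I would fix a horizontal curve $q(\cdot)\in H^1(0,1;\s)$; by definition there exists at least one control $Y(\cdot)\in L^2(0,1;V)$ with $\dot q(t)=\xi_{q(t)}Y(t)$ a.e., so in particular $\dot q(t)\in\xi_{q(t)}V$ for almost every $t$, and the quantity $X(t):=\xi_{q(t)}^{-1}\dot q(t)$ is well-defined a.e. From \eqref{eq:mliftvec} applied with $w=\dot q(t)$, for almost every $t$ we have $\langle X(t),X(t)\rangle \leq \langle Y(t),Y(t)\rangle$ for \emph{any} admissible control $Y$, with equality (for a.e.\ $t$) iff $Y(t)=X(t)$. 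Integrating gives $A(X(\cdot))\leq A(Y(\cdot))$ for every admissible $Y$, and $A(X(\cdot))=A(Y(\cdot))$ forces $X=Y$ in $L^2(0,1;V)$; this already yields both the minimality and the uniqueness, \emph{provided} $X(\cdot)$ is itself an admissible control, i.e.\ measurable and in $L^2(0,1;V)$.

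So the core of the proof is showing $X(\cdot)\in L^2(0,1;V)$. Square-integrability of the norm is the easy half: $\langle X(t),X(t)\rangle\leq\langle Y(t),Y(t)\rangle\in L^1(0,1)$, so once measurability is known, $X(\cdot)\in L^2(0,1;V)$ automatically, and in fact $\dot q(t)=\xi_{q(t)}X(t)$ a.e.\ because $\xi_{q(t)}\xi_{q(t)}^{-1}=\mathrm{id}_{\xi_{q(t)}V}$. For measurability, I would argue as follows: $X(t)$ is the unique solution in $V$ of the minimization problem $\min\{\langle Z,Z\rangle : \xi_{q(t)}Z=\dot q(t)\}$, equivalently $X(t)=Y(t)-\Pi_{\ker\xi_{q(t)}}Y(t)$ where $\Pi_{\ker\xi_{q(t)}}$ is the orthogonal projector onto $\ker\xi_{q(t)}$. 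Since $q(\cdot)$ is continuous (being $H^1$ into a Banach manifold) and, by the smoothness-on-the-right property of Definition \ref{shapespace}, the map $q\mapsto\xi_q\in L(V,T\s)$ is continuous (indeed $\mathcal{C}^{k-1}$), the family of closed subspaces $t\mapsto\ker\xi_{q(t)}$ depends measurably on $t$; hence $t\mapsto\Pi_{\ker\xi_{q(t)}}\in L(V,V)$ is strongly measurable, and applying it to the measurable map $t\mapsto Y(t)$ gives measurability of $t\mapsto X(t)$. (A clean way to make this rigorous: $\Pi_{\ker\xi_{q(t)}}Y(t)=\lim_{\varepsilon\to 0}\varepsilon^{-1}$-type resolvent expressions, or simply note $X(t)$ is a pointwise a.e.\ limit of Galerkin approximations that are manifestly measurable in $t$.)

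Once $X(\cdot)$ is admissible, the remaining assertions are immediate: $\varphi^X(\cdot)$ is the horizontal lift of $q(\cdot)$ with $q(\cdot)=\varphi^X(\cdot)\cdot q(0)$ by the Regularity axiom and the identity $q^X(t)=\varphi^X(t)\cdot q_0$ recorded after \eqref{eqxi}, and among all controls $Y$ with $q(\cdot)=\varphi^Y(\cdot)\cdot q(0)$ — equivalently $\dot q(t)=\xi_{q(t)}Y(t)$ a.e. — we have shown $X$ is the unique minimizer of $A$. Since the action of a horizontal lift on $\D^s(M)$ equals the action of the corresponding control, $\varphi^X(\cdot)$ is the horizontal curve on $\D^s(M)$ of minimal action satisfying $q(\cdot)=\varphi^X(\cdot)\cdot q(0)$, which is the definition of the minimal lift.

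The main obstacle I anticipate is precisely the measurability of $t\mapsto\xi_{q(t)}^{-1}\dot q(t)$: the pointwise inequality and the variational characterization \eqref{eq:mliftvec} are essentially free, but because $\ker\xi_{q(t)}$ is a time-varying infinite-dimensional closed subspace of $V$, one must be a little careful to justify that the orthogonal projection depends measurably (not merely pointwise-defined) on $t$. The continuity of $q\mapsto\xi_q$ from the shape-space axioms is exactly what is needed, so this is a technical point rather than a conceptual one, but it is the step that genuinely uses the infinite-dimensional structure and should be spelled out rather than asserted.
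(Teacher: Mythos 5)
Your proof is correct and follows essentially the same route as the paper, which presents the lemma as an immediate consequence of the pointwise minimality statement \eqref{eq:mliftvec} applied along the curve. The extra care you take with measurability and square-integrability of $t\mapsto\xi_{q(t)}^{-1}\dot q(t)$ goes beyond what the paper records (it passes over this silently); the only shaky spot is your justification that $t\mapsto\Pi_{\ker\xi_{q(t)}}$ is measurable "because $q\mapsto\xi_q$ is continuous" --- kernels of operators do not vary continuously (they can jump), so one should instead argue, e.g., via a measurable-selection theorem for the closed convex sets $\{Z\in V:\xi_{q(t)}Z=\dot q(t)\}$ or via the closed-graph property of $t\mapsto\ker\xi_{q(t)}$, but this is a refinement of a point the paper does not address at all.
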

\begin{remark}\label{rk:minlift} One can then define the length and action of a horizontal curve $q(\cdot)$ independently of a corresponding control by
$$
\begin{aligned}
L^\s(q(\cdot))&=\int_0^1\sqrt{\langle \xi_{q(t)}^{-1}\dot{q}(t),\xi_{q(t)}^{-1}\dot{q}(t)\rangle} dt,
\\
 A^\s(q(\cdot))&=\frac{1}{2}\int_0^1\langle \xi_{q(t)}^{-1}\dot{q}(t),\xi_{q(t)}^{-1}\dot{q}(t)\rangle dt.
\end{aligned}
$$
However, $L^\s$ and $A^\s$ are much harder to handle than $A$ and $L$ (for example, they may not be differentiable), so we will not use them.
\end{remark}

\paragraph{The dual viewpoint.} Computing the minimal lift of a given horizontal curve $q(\cdot)$ is very hard to do, as we would need to compute the pseudo-inverse $\xi_q^{-1}$. However, one can use a dual point of view to directly generate controls $X(\cdot)$ that correspond to minimal lifts.


\begin{lemma}\label{lem:Kq}
Let $q\in M$ and $p\in T_q^*M$. The \textbf{momentum map} of the action of $\mathcal{D}^s(TM)$ on $\mathcal{S}$ is defined by $\mu^\s:(q,p)\mapsto \xi_q^*p=p\diamond q\in \Gamma^s(TM)^*$. \\
Then $X=K_{V}\xi_q^*p$ belongs to $(\ker \xi_q)^\perp$. In particular,
$$
X=\mathrm{argmin}\{\langle Y,Y\rangle\mid Y\in V,\ \xi_qY=\xi_qX\}.
$$
\end{lemma}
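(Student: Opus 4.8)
The plan is to show that $X = K_V \xi_q^* p$ is orthogonal to $\ker \xi_q$ by a direct computation using the defining property of $K_V$ as the inverse of the canonical isometry $V \to V^*$. First I would take an arbitrary $Y \in \ker \xi_q$, so that $\xi_q Y = 0$, and compute the inner product $\langle X, Y \rangle$. By definition of $K_V$, for any $P \in V^*$ we have $\langle K_V P, Y \rangle = P(Y)$; applying this with $P = \xi_q^* p \in \Gamma^s(TM)^* $ (which indeed lies in $V^*$ since $V \hookrightarrow \Gamma^s(TM)$ continuously, as noted in Remark~\ref{rkhs}), we get
$$
\langle X, Y \rangle = \langle K_V \xi_q^* p, Y \rangle = (\xi_q^* p)(Y) = p(\xi_q Y) = p(0) = 0.
$$
Since $Y \in \ker \xi_q$ was arbitrary, this shows $X \in (\ker \xi_q)^\perp$.

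Next I would derive the variational characterization. Writing $X = K_V \xi_q^* p$ and noting $\xi_q X =: w$, I want to show $X = \mathrm{argmin}\{\langle Y, Y\rangle \mid Y \in V,\ \xi_q Y = w\}$. This follows immediately from the equivalence \eqref{eq:mliftvec} once we know that $X = \xi_q^{-1} w$, i.e. that $X$ is the unique element of $(\ker \xi_q)^\perp$ mapping to $w$ under $\xi_q$. But any $Y$ with $\xi_q Y = w$ differs from $X$ by an element of $\ker \xi_q$, and since $X \in (\ker \xi_q)^\perp$ by the first step, $X$ is precisely the orthogonal projection of any such $Y$ onto $(\ker \xi_q)^\perp$, hence $X = \xi_q^{-1} w$. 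Then \eqref{eq:mliftvec} gives the minimality directly. Alternatively, and perhaps more cleanly, one can argue the minimality by the Pythagorean identity: for any competitor $Y$ with $\xi_q Y = w$, write $Y = X + Z$ with $Z = Y - X \in \ker \xi_q$; then $\langle Y, Y \rangle = \langle X, X \rangle + 2\langle X, Z \rangle + \langle Z, Z \rangle = \langle X, X \rangle + \langle Z, Z \rangle \geq \langle X, X \rangle$, with equality iff $Z = 0$.

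I do not anticipate a serious obstacle here; the statement is essentially the standard fact that the RKHS representer of a functional realized as an adjoint lands in the orthogonal complement of that functional's kernel. The only point requiring a modicum of care is the justification that $\xi_q^* p$ genuinely defines an element of $V^*$ to which $K_V$ applies — this is exactly the footnote observation in Remark~\ref{rkhs} that any continuous linear functional on $\Gamma^s(TM)$ restricts to one on $V$, combined with the fact that $\xi_q : \Gamma^s(TM) \to T_q\s$ is a bounded linear map (it is the differential $\d R_q(e)$ of the smooth map $R_q$), so that $\xi_q^* p = p \circ \xi_q$ is bounded on $\Gamma^s(TM)$ and a fortiori on $V$. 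One should also note for consistency of notation that the "$q \in M$" in the lemma statement should presumably read "$q \in \s$" and "$p \in T_q^* M$" should read "$p \in T_q^* \s$", so that $\xi_q^* p$ makes sense; with that reading the proof above goes through verbatim.
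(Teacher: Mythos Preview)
Your proof is correct and follows essentially the same approach as the paper: the paper's argument is the one-line computation $\langle X,X_0\rangle=\xi_q^*p(X_0)=p(\xi_qX_0)=0$ for $X_0\in\ker\xi_q$, which is exactly your first step, and then the minimality is left implicit since it follows from the orthogonal decomposition and \eqref{eq:mliftvec} already established. Your additional remarks on why $\xi_q^*p\in V^*$ and on the typos in the statement ($q\in M$, $p\in T_q^*M$ should be $q\in\s$, $p\in T_q^*\s$) are accurate and add useful clarity.
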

 \begin{proof}
Let $X=K_V\xi_q^*p$ (recall that $K_V$ is the canonical isometry $V^*\rightarrow V$). For every $X_0\in\ker(\xi_q)$, we have $\langle X,X_0\rangle=\langle K_{\mathcal{H}}^e\xi_q^*p,X_0\rangle=\xi_q^*p(X_0)=p(\xi_qX_0)=0$, hence $X\in\ker(\xi_q)^\perp$.
\end{proof}

We define the operator 
$$
K_q=\xi_qK_V\xi_q^*:T_q^*\s\rightarrow \xi_q(V)\subset T_q\s.
$$
$K_q$ is symmetric (i.e., $p_1(K_qp_2)=p_2(K_qp_1)$), positive semi-definite (i.e., $p(K_qp)\geq 0$), and we have $\langle K_{V}\xi_q^*p,K_{V}\xi_q^*p\rangle=p(K_qp)$. The map $(q,p)\mapsto K_qp$ is of class $\mathcal{C}^{k}$. Moreover $\xi_q(V)=K_q(T_q^*\s)$ when  $\xi_q(V)$ is closed in $T_q\s$ \cite{BBOOK}.

Then curves $q(\cdot)\in H^1(0,1;\s) $ satisfying
$$
\dot{q}(t)=K_{q(t)}p(t),\quad p(t)\in T_{q(t)}^*\s,
$$
almost everywhere are horizontal, and have minimal lift $X(t)=K_V\xi_{q(t)}^*p(t)$, with action
$$
\frac{1}{2}\int_0^1\left< X(t),X(t)\right>\d t=\frac{1}{2}\int_0^1p(t)\left( K_{q(t)}p(t)\right)\d t.
$$
\paragraph{Examples using the reproducing kernel.} As mentioned in Remark \ref{rkhs}, if $V$ is a Hilbert space of vector fields on $M$ with continuous inclusion in $\Gamma^s(TM)$, then $V$ is uniquely characterized by its reproducing kernel $K$, which is a section of the vector bundle $\mathrm{L}(T^*M,TM)\rightarrow M\times M$ such that $\int_{M\times M}P(x)\left(K(x,y)P(y)\right)\,\d y\,\d x \geq 0$ for every continuous one-form $P:M\rightarrow T^*M$.
More generally, this kernel also converts any continuous linear form $P$ on $V$ into the vector field $X\in V$ such that $P=\langle X,\cdot\rangle$, by convolution of $P$ with $K$: $X(x)=K(x,\cdot)P=\int_{M}K(x,y)P(y)\,\d y$.

In shape analysis, it is common to start with an explicit kernel $K$ instead of the space $V$. 
\begin{example} A commonly used example is the Gaussian kernel when $M=\R^d$:
$$
K(x,y)p=e^{-\frac{\vert x-y\vert^2}{\sigma^2}}p^T,
$$
where we identified linear forms $p\in (\R^d)^*$ with lign matrices, and vectors in $\R^d$ with column matrices.
\end{example}
Such a kernel does generate a unique Hilbert space of vector fields $V$, but the space itself is not explicit. Consequently, it is hard to say if a given vector field belongs to $V$. On the other hand, it is easy to generate explicit examples of elements of $V$: since we have an explicit formula for the reproducing kernel, we can simply take convolutions of continuous 1-forms (or, more generally, 1-cocurrents) with $K$. This is another reason why the dual viewpoint is so important.

\begin{example}
Take $\mathcal{S}=\mathrm{Lmk}^n(M)$. Any element $q$ of $\mathcal{S}$ is a $n$-uple $q=(x_1,\dots,x_n)$ with $x_i\in M$, and the infinitesimal action of $X\in \Gamma^s(TM)$ is $X\circ q=\xi_qX=(X(x_1),\dots,X(x_n))$.
Any one-form on $\mathcal{S}$ is a $n$-tuple $p=(\alpha_1,\dots,\alpha_n)$ with $\alpha_i\in T^*_{x_i}M$, and
$p(w)=\sum_{i=1}^n \alpha_i(w_i)$.
We have $p\diamond q(X)=p(X\circ q)=\sum_{i=1}^n \alpha_i(X(x_i))$, and hence $p\diamond q=\sum_{i=1}^n \alpha_i\otimes \delta_{x_i}$, with $\delta_x$ the usual Dirac mass. Therefore, if $(x,y)\mapsto K(x,y)$ is the reproducing kernel of the Hilbert space $V$, we get $K_V\xi_q^*p(x)=\sum_{i=1}^n K(x,x_i)\alpha_i$, and
$$
K_qp=\xi_qK_V\xi_q^*p=\sum_{i=1}^n \left( K(x_1,x_i)\alpha_i,\dots,K(x_n,x_i)\alpha_i \right)\in T_{x_1}M\times\dots\times T_{x_n}M.
$$
In particular,
$$
\left<K_V\xi_q^*p,K_V\xi_q^*p\right>=p(K_qp)=
\sum_{i,j=1}^np_i(K(x_i,x_j)p_j).
$$
\end{example}

\begin{example}
Take $\mathcal{S}=\mathcal{C}^{0}(N,M)$, with $N$ a compact Riemannian manifold. An element $q$ of $\mathcal{S}$ is a continuous mapping $q:N\rightarrow M$, and 
$$
T_q\s=\Gamma_0(q^*TM)=\{v\in\mathcal{C}^0(N, TM)\mid \forall a\in S,\ v(a)\in T_{q(a)}M\}
$$
The infinitesimal action of $X\in V$ at $q$ is
$$
\xi_qX=X\circ q\in \Gamma_0(q^*TM).
$$
A one-form $p\in T^*_q\mathcal{S}$ is a section of $q^*T^*M$ with (distributional) coefficients in the space of Radon measures. Then
$$
p(v)=\int_{N}dp(a)(v(a)).
$$
Now for $X\in V$, we have
$$
p(\xi_qX)=\int_{N}dp(a)(X(q(a))),
$$
so for $x\in M$,
$$
K_V\xi_q^*p(x)=\int_NK(x,q(a))dp(a).
$$
For $K_qp:N\rightarrow TM$, we obtain
$$
K_qp(a)=\int_NK(q(a),q(a'))dp(a')
$$
and
$$
\left<K_V\xi_q^*p,K_V\xi_q^*p\right>=p(K_qp)=\iint_{N^2}dp(a)(K(q(a),q(a')))
dp(a').
$$
\end{example}

\subsection{Sub-Riemannian distance}

We keep the same notations as in the previous section. The sub-Riemannian distance induced by the sub-Riemannian structure $\mathrm{SR}_V^\s=(\s\times V,\langle\cdot,\cdot\rangle,\xi)$ is denoted $d_{SR}^\s:\s\times\s\rightarrow\R_+\cup\{\infty\}$. Recall that $d_{SR}^\s(q_0,q_1)$ is the infimum over the lengths of every horizontal system $(q(\cdot),X(\cdot))$ with $q(0)=q_0$ and $q(1)=q_1.$ 
It is clear that $d_{SR}^{\s}$ is at least a pseudo-distance (it satisfies all axioms of a distance function except, possibly, the separation axiom).

\medskip

Note that we have
\begin{equation}
\label{eq:srdiffsh}
d_{SR}^\s(q_0,q_1)=\inf_{\varphi,\ \varphi\cdot q_0=q_1} \left( d_{SR}(e,\varphi)\right)=d_{SR}(e,R_{q_0}^{-1}(\{q_1\})),
\end{equation}
with $\inf(\emptyset)=+\infty$ and $R_{q_0}^{-1}(\{q_1\})=\{\varphi\in\D^s(M)\mid \varphi\cdot q_0=q_1\}$. Indeed, if $d_{SR}(e,\varphi)<+\infty$ and $\varphi\cdot q_0=q_1$, we can consider $X(\cdot)$ such that $\varphi^X(\cdot)$ is the minimizing geodesic between $e$ and $\varphi$. But then $d_{SR}(e,\varphi)=L(X(\cdot))\geq d_{SR}^\s(q_0,q_1)$. On the other hand, if $(X_n(\cdot))_{n\in\N}$ is a sequence of controls steering $q_0$ to $q_1$ with $L(X_n(\cdot))\rightarrow d_{SR}^\s(q_0,q_1)$ as $n$ goes to infinity, then $d_{SR}^\s(e,R_{q_0}^{-1}(\{q_1\}))\leq d_{SR}(e,\varphi^{X_n}(1))\leq L({X_n}(\cdot))$.


\medskip

A horizontal system $(q(\cdot),X(\cdot))$ is said to be \textit{minimizing} if it minimizes the action among all other systems with the same endpoints. The horizontal curve $q(\cdot)$ is called a \textit{minimal curve}, while $X(\cdot)$ is a \textit{minimal control}. In this case, Cauchy-Schwarz's theorem tells us that
$$
d_{SR}^\s(q(0),q(1))=L(q(\cdot),X(\cdot))=\sqrt{2A(q(\cdot),X(\cdot))}.
$$ 
Note that the minimal lift $\varphi(\cdot)$ of a minimizing curve $q(\cdot)$ is a minimizing curve between $e$ and $\varphi(1)$. In this case, $d_{SR}^\s(q(0),q(1))=d_{SR}(e,\varphi(1))$.

\begin{theorem}\label{th:complete2}
The sub-Riemannian distance is a true distance, that is, $d_{{SR}}^\s(q_0,q_1)=0$ implies $q_0=q_1$. It induces a topology that is at least as fine as the manifold topology of $\s$. \\
Moreover, if $q_0\in \s$ has compact support, and if we denote $\s_{q_0}=\D^s(M)\cdot q_0$, then $(\s_{q_0},d_{SR}^{\s})$ is geodesically complete (any two points within finite distance can be connected by a minimizing geodesic).
\end{theorem}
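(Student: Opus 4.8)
The plan is to deduce everything from the group-level results of Section~\ref{sec2} by means of the identity \eqref{eq:srdiffsh}, $d_{SR}^\s(q_0,q_1)=d_{SR}(e,R_{q_0}^{-1}(\{q_1\}))$, together with the continuity and regularity axioms of Definition~\ref{shapespace}. For the separation property, suppose $d_{SR}^\s(q_0,q_1)=0$. By \eqref{eq:srdiffsh} there is a sequence $\varphi_n\in\D^s(M)$ with $\varphi_n\cdot q_0=q_1$ and $d_{SR}(e,\varphi_n)\to0$; Proposition~\ref{propo:finertopo} gives $\varphi_n\to e$ in the manifold topology of $\D^s(M)$, and continuity of $(\varphi,q)\mapsto\varphi\cdot q$ then forces $q_1=\varphi_n\cdot q_0\to q_0$, so $q_0=q_1$. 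The comparison of topologies is the same argument localized at a point: if $d_{SR}^\s(q_n,q)\to0$, pick $\varphi_n$ with $\varphi_n\cdot q=q_n$ and $d_{SR}(e,\varphi_n)\le d_{SR}^\s(q_n,q)+1/n$, so $\varphi_n\to e$ in $\D^s(M)$ and hence $q_n=\varphi_n\cdot q\to q$ in $\s$; thus the identity map from $(\s,d_{SR}^\s)$ to $\s$ with its manifold topology is continuous, i.e. $d_{SR}^\s$ induces a finer topology.

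The substance of the statement is the geodesic completeness of $(\s_{q_0},d_{SR}^\s)$, for which I would run the direct method of the calculus of variations, as in the proof of Theorem~\ref{th:complete} in \cite{AT}, made tractable by the compact support. Fix $q_0',q_1'\in\s_{q_0}$ with $d:=d_{SR}^\s(q_0',q_1')<+\infty$. Since $\D^s(M)$ acts transitively on the orbit, $q_0'=\psi\cdot q_0$ for some $\psi\in\D^s(M)$, and then $R_{q_0'}=R_{q_0}\circ R_\psi$ shows that $q_0'$ still has compact support, say in a compact set $U\subset M$; that is, $R_{q_0'}$ is continuous for the semi-distance $d_{H^{s}(U,M)}$. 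By definition of $d_{SR}^\s$ as an infimum, there are horizontal systems with $q^{X_n}(0)=q_0'$, $q^{X_n}(1)=q_1'$ and $L(X_n)\to d$; reparametrizing each to constant speed we may assume $\|X_n\|_{L^2(0,1;V)}=L(X_n)\to d$. As $L^2(0,1;V)$ is a Hilbert space, a subsequence satisfies $X_n\rightharpoonup X$ weakly.

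The key step is to upgrade this to $\varphi^{X_n}(\cdot)\to\varphi^{X}(\cdot)$ in $C^0([0,1];H^{s}(U,M))$ along a further subsequence. Since $V\hookrightarrow\Gamma^{s+k}(TM)$ with $k\ge1$, the $X_n$ are bounded in $L^2(0,1;\Gamma^{s+1}(TM))$; the flow estimates of \cite{AT} then bound $\varphi^{X_n}(t)$ in $H^{s+1}$ over $U$ uniformly in $t$ and $n$, and make $t\mapsto\varphi^{X_n}(t)$ equi-$\frac{1}{2}$-H\"older for $d_{H^{s}(U,M)}$. As $H^{s+1}(U,M)$ is compactly embedded in $H^{s}(U,M)$ (Rellich), Arzel\`a--Ascoli produces a uniform limit $\Phi$, which one identifies with $\varphi^X$ by passing to the limit in the integral equation $\varphi^{X_n}(t)=\mathrm{id}+\int_0^t X_n(\tau)\circ\varphi^{X_n}(\tau)\,d\tau$: the term $\int_0^t X_n(\tau)\circ\Phi(\tau)\,d\tau$ converges because $Y\mapsto\int_0^t Y(\tau,\cdot)\,d\tau$ is a bounded linear map and $X_n\rightharpoonup X$, while the error $\int_0^t\big(X_n(\tau)\circ\varphi^{X_n}(\tau)-X_n(\tau)\circ\Phi(\tau)\big)\,d\tau$ is controlled by $\big(\sup_\tau\|\varphi^{X_n}(\tau)-\Phi(\tau)\|\big)\int_0^1\|X_n(\tau)\|_{C^1}\,d\tau\to0$. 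Continuity of $R_{q_0'}$ for $d_{H^{s}(U,M)}$ then gives $q^{X_n}(t)=\varphi^{X_n}(t)\cdot q_0'\to\varphi^X(t)\cdot q_0'=q^X(t)$ in $\s$, so $q^X(0)=q_0'$ and $q^X(1)=q_1'$. Finally, weak lower semicontinuity of the norm gives $2A(X)=\|X\|_{L^2(0,1;V)}^2\le\liminf\|X_n\|_{L^2(0,1;V)}^2=d^2$, hence $L(X)\le\sqrt{2A(X)}\le d$; combined with $L(X)\ge d$ from the definition of $d_{SR}^\s$, the deformation $q^X(\cdot)$ is minimizing between $q_0'$ and $q_1'$, and reparametrizing it to constant speed yields the sought minimizing geodesic.

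The main obstacle is exactly this weak-to-strong continuity of the flow map $X(\cdot)\mapsto q^X(\cdot)$: weak convergence of the controls must be converted into uniform convergence of the induced deformations, and this is precisely where the compact-support hypothesis (so that a single compact set $U$ supports the whole analysis) and the regularity gain $k\ge1$ in $V\hookrightarrow\Gamma^{s+k}(TM)$ (so that the Rellich embedding is genuinely compact) are both indispensable. The rest --- weak lower semicontinuity of the action, the identity \eqref{eq:srdiffsh}, and the topological arguments --- is routine.
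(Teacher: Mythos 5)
Your proposal is correct and follows essentially the same route as the paper: the identity $d_{SR}^\s(q_0,q_1)=d_{SR}(e,R_{q_0}^{-1}(\{q_1\}))$ together with Proposition~\ref{propo:finertopo} for the separation and topology statements, and the direct method (weak compactness of a constant-speed minimizing sequence of controls, weak-to-strong convergence of the induced flows on the compact support, and weak lower semicontinuity of the action) for geodesic completeness. The only real difference is that where the paper invokes Lemma~2 of \cite{AT} as a black box for the weak-to-strong continuity of the flow map, you sketch its proof via Rellich compactness and Arzel\`a--Ascoli.
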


\begin{proof} Since Banach manifolds are Hausdorff spaces, proving that $d_{SR}^{\s}$ induces a finer topology on $\s$ will also prove that it is a true distance. For this, let $U\subset\s$ be an open subset for the manifold topology. Fix $q\in U$. Then $R_q^{-1}(U)$ is open in $\D^s(M)$ and contains $e$.  Theorem \ref{th:complete} implies that there exists $r>0$ such that the sub-Riemannian ball $B$ in $\D^s(M)$ of center $e$ included in $R_q^{-1}(U)$. But we saw earlier in the section that $R_q(B)$ is exactly equal to the sub-Riemannian ball on $\s$ of center $q$ with same radius as $B$. Therefore, $U$ contains an open ball for $d_{SR}^{\s}$, and the sub-Riemannian topology is finer than the manifold topology.

Now let us prove the geodesic completeness of our space for compactly supported states. Assume $q_0$ has compact support. It is clear that any element of $\s_{q_0}$ also has compact support. Hence, to prove that $\s_{q_0}$ is geodesically complete, we just need to prove that if $d^\s(q_0,q_1)<+\infty$, then $q_0$ and $q_1$ can be joined by a minimizing geodesic.

Let $(q^n(\cdot))_{n\in\N}$ be a minimizing sequence of horizontal curves in $H^1(0,1;\mathcal{S})$ such that $q^n(0)=q_0$ and $q^n(1)=q_1$ for every $n$, induced by a sequence of controls $X^n(\cdot)$, such that $\sqrt{2A(q^n(\cdot),X^n(\cdot))}$ converges to $d_{SR}^{\mathcal{S}}(q_0,q_1)$. For every $n$, denote by $\varphi^n(\cdot)$ the flow of $X^n$.
Up to the extraction of some subsequence, we have $X^n(\cdot)\rightarrow \bar{X}(\cdot)$ for the weak topology in $L^2(0,1;V)\subset L^2(0,1;\Gamma^{s_0+\ell+1}(TM))$. Let $\bar\varphi(\cdot)=\varphi^{\bar X}(\cdot)$.  Lemma 2 from \cite{AT} implies that $\varphi^n(1)_{\vert U}\rightarrow\bar\varphi(1)_{\vert U}$ strongly in $H^s$ as $n\rightarrow+\infty$ for every compact subset $U$ of $M$. Since $q_0$ has a compact support, this implies that the horizontal curve $\bar{q}(\cdot)=\bar\varphi(\cdot)\cdot q_0$ satisfies $\bar{q}(1)=q_1$. But
$$
A(\bar{X}(\cdot))\leq \liminf_{n\rightarrow\infty}A(X^n(\cdot))
=\frac{1}{2}d_{SR}^{\mathcal{S}}(q_0,q_1)^2.
$$
%
%
%
\end{proof}

\subsection{Sub-Riemannian geodesics and singular curves on shape spaces}\label{geodshsp}

We assume that $\mathcal{S}$ is a shape space in $M$ of order $\ell\geq1$, and that $\mathcal{D}^s(M)$, $s=s_0+\ell$, is equipped with a strong right-invariant sub-Riemannian structure induced by the Hilbert space $(V,\left<\cdot,\cdot\right>)$ of vector fields on $M$, with continuous inclusion $V\hookrightarrow \Gamma^{s+k}(TM)$ for some $k\geq 1$.

\paragraph{The differential structure of the space of horizontal systems and the endpoint mapping.}  Without further assumptions on $\s$ (for example, the existence of a local addition), it is not known whether $H^1(0,1;\s)$ admits a natural differential structure \cite{KMBOOK}.  However, Condition 4 of Definition \ref{shapespace} shows that the space of all horizontal systems with starting point $q_0$ is in one-to-one correspondance with $ L^2(0,1;V)$ through the identification $X(\cdot)\leftrightarrow (\varphi^X(\cdot)\cdot q_0,X(\cdot))$. This endowes the space of horizontal systems from $q_0$ with a smooth manifold structure with a single coordinate chart. We will always identify it with $ L^2(0,1;V)$ through this correspondance from now on.

\paragraph{Geodesics.} Fix an initial point $q_0$ and a final point $q_1$ in $ \mathcal{S}$. The endpoint mapping from $q_0$ is 
$$
\mathrm{end}^\s_{q_0}(X(\cdot))=\varphi^X(1)\cdot q_0=R_{q_0}\circ \mathrm{end}(X(\cdot)),
$$
where $\mathrm{end}(X(\cdot))=\varphi^X(1)\in\D^s(M)$. It is of class $\mathcal{C}^{k}$, as a composition of $\mathrm{end}(X(\cdot))$ and $R_{q_0}$, which is of class $\mathcal{C}^k$ on Im$(\mathrm{end})\subset\mathcal{D}^{s+k}(M)$. A \textit{geodesic} on $\s$ between the states $q_0$ and $q_1$ is a horizontal system $(q(\cdot),X(\cdot))$ joining $q_0$ and $q_1$ that is a critical point of the action among all horizontal systems with the same endpoints, i.e., a critical point of $A(X(\cdot))=A(q(\cdot),X(\cdot))$ restricted to $\mathrm{end}_{q_0}^{-1}(\{q_1\})$. In other words, for any $\mathcal{C}^1$-family $a\mapsto X_a(\cdot)\in L^2(0,1;V)$ such that $\varphi^{X_a}(1)\cdot q_0=q_1$ for every $a$, with $X_0=X$, we have
$$
\partial_aA(X_a(\cdot))_{\vert a=0}=0.
$$
\begin{remark}
If $(q(\cdot),X(\cdot))$ is a geodesic, then $\varphi^X$ is the minimal lift of $q(\cdot)$. Indeed, if $X(t)$ does not belong almost everywhere to $\h_q=(\ker \xi_q)^\perp$, it admits a unique split $X=X_0+X_1$, with $X_0(t)\in\ker \xi_{q(t)}$ almost everywhere and $X_0(\cdot)\neq 0$. But then $X_s=X-sX_0$ generates the same horizontal curve $t\mapsto q(t)$ and 
$$\partial_s A(X_s)=-\int_0^1\langle X_0(t),X_0(t)\rangle dt\neq0.$$
Consequently, a geodesic $(q(\cdot),X(\cdot))$ is uniquely determined by its trajectory $q(\cdot)$, and we can identify geodesics with trajectories of geodesics. We will do so in the rest of the section.
\end{remark}

\begin{remark}
Any sub-arc of a geodesic is also a geodesic (up to a reparametrization).
\end{remark}

In Section \ref{sec2}, we saw that there can be three kinds of geodesics on $\D^s(M)$, depending on the image of the derivative of the couple $(A,\mathrm{end})$. This is also true here, at least in the general case where $\s$ has infinite dimensions. If $(q(\cdot),X(\cdot))$ is a geodesic, that is, a critical point of the action with fixed endpoints, then one of two possibilities is satisfied:
\begin{enumerate}
\item There exists $(\lambda,p_1)\in \R\times T_{q_1}^*\s\setminus\{0,0\}$ such that
\begin{equation}\label{eq:lagcondshsp}
\d\,\mathrm{end}^\s_{q_0}(X(\cdot))^*p_1+\lambda \d A(X(\cdot))=0.
\end{equation}
This is a Lagrange mutiplier characterization, which splits into two subcases:
\begin{itemize}
\item If $\lambda\neq0$, we can take $\lambda=-1$ so that $ \d A(X(\cdot))=\d\,\mathrm{end}^\s_{q_0}(X(\cdot))^*p_1$, and say that $q(\cdot)$ is a \textit{normal geodesic}.
\item If $\lambda=0$, $p_1\neq0$ and $\d\,\mathrm{end}^\s_{q_0}(X(\cdot))^*p_1=0$, in which case $X(\cdot)$ is a singular point of $\mathrm{end}^\s_{q_0}$. We say that $q(\cdot)$ is an \textit{abnormal geodesic.}
\end{itemize}
\item The image of $(\d A(X(\cdot)),\d\,\mathrm{end}^\s_{q_0}(X(\cdot)))$ is a proper dense subset of $\R\times T_{q_1}\s$: there is no Lagrange multipliers characterization. We say that $q(\cdot)$ is an \textit{elusive geodesic.}
\end{enumerate}
Indeed, for a geodesic, the couple of linear maps $(\d A(X),\d\,\mathrm{end}^\s_{q_0}(X))$ can't be surjective. Then, if its image is not a proper dense subset (i.e., if we are not in the second case), it is included in a closed hyperplane, and we are in the first case.

\subsubsection{Statement of the results} 

In this section, we define the Hamiltonian and the corresponding geodesic flow.

\paragraph{Canonical symplectic form, symplectic gradient.} We denote by $\omega$ the canonical weak symplectic form on $T^*\s$, given by the formula
$$
\omega(q,p).(\delta q_1,\delta p_1;\delta q_2,\delta p_2)=\delta p_2(\delta q_1)-\delta p_1(\delta q_2),
$$
with $(\delta q_i,\delta p_i)\in T_{(q,p)}T^*\s\simeq T_q\s\times T^*_q\s$ in a canonical coordinate system $(q,p)$ on $T^*M$. 

A function $f:T^*\s\rightarrow \R$, differentiable at some point $(q,p)\in T^*\s,$ admits a \textit{symplectic gradient} at $(q,p)$ if there exist a vector $\nabla^\omega f(q,p)\in T_{(q,p)}T^*\s$ such that, for every $z\in  T_{(q,p)}T^*\s,$
$$
df_{(q,p)}(z)=\omega(\nabla^\omega f(q,p),z).
$$
In this case, this symplectic gradient $\nabla^\omega f(q,p)$ is unique.

Such a gradient exists if and only if $\partial_pf(q,p)\in T_q^{**}\s$ can be identified with a vector in $T_q \s$ through the canonical inclusion $T_q \s\hookrightarrow T^{**}_q\s.$ In that case, we have, in canonical coordinates,
$$
\nabla^\omega f(q,p)=(\partial_pf(q,p),-\partial_qf(q,p)).
$$

\paragraph{The normal Hamiltonian function and geodesic equation.} We define the \emph{normal Hamiltonian} of the system $H^\s:T^*\s\rightarrow\R$ by
$$
H^\s(q,p)=\frac{1}{2}p(K_qp)=\frac{1}{2}p(\xi_qK_V\xi_q^*p).
$$
This is a function of class $\mathcal{C}^k$. Moreover, for every $(q,p)\in T^*\s$ and $\delta p\in T^*_q\s$, $\partial_p H^\s(q,p)(\delta p)=\delta p(K_qp),$ so that we can identify $\partial_p H^\s(q,p)\in T^{**}_q\s$ with $K_qp\in T_qM$: $\nabla^\omega H^\s(q,p)$ is well-defined on $T^*\s$, and given in canonical coordinates by
$$
\nabla^\omega H^\s(q,p)=(K_qp,-\frac{1}{2}\partial_q(K_qp)^*p).
$$
Note that $\nabla^\omega H^\s$ is of class $\mathcal{C}^{k-1}$ on $T^*\s$. Hence, if $k\geq2$, it admits a unique local flow, called the Hamiltonian geodesic flow.
\begin{theorem}\label{theo:carachamilnorm}
Assume $k\geq1$. Then a horizontal curve $q(\cdot)$ is a geodesic if and only if it is the projection of an integral curve $(q(\cdot),p(\cdot))$ of $\nabla^\omega H$. That is, $(\dot{q}(t),\dot{p}(t))=\nabla^\omega H^\s(q(t),p(t))$ for almost every $t$ in $[0,1]$.

\medskip

Assume $k\geq2$. Then $\nabla^\omega H$ admits a \textbf{global flow} on $T^*\s$ of class $\mathcal{C}^{k-1}$, called the \textbf{Hamiltonian geodesic flow}. In other words, for any initial point $(q_0,p_0)\in T^*\s$, there exists a unique curve $t\mapsto (q(t),p(t))$ defined on all of $\R$, such that $(q(0),p(0))=(q_0,p_0)$ and, for almost every $t$,
$$
(\dot{q}(t),\dot{p}(t))=\nabla^\omega H^\s(q(t),p(t)).
$$
This curve is of class $\mathcal{C}^k$ in time and $\mathcal{C}^{k-1}$ with respect to $(q_0,p_0)$.
\end{theorem}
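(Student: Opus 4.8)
The plan is to reduce everything to the already-established results on $\D^s(M)$ from Section 1, using the relation $\mathrm{end}^\s_{q_0} = R_{q_0}\circ\mathrm{end}$ and the momentum formulation. First I would prove the characterization of geodesics (the $k\geq 1$ part). Given a geodesic $(q(\cdot),X(\cdot))$ which is automatically normal (the abnormal and elusive cases carry no Hamiltonian characterization and are excluded exactly as in the discussion preceding the theorem, or handled by the density remark), the Lagrange condition \eqref{eq:lagcondshsp} with $\lambda=-1$ reads $\d A(X(\cdot)) = \d\,\mathrm{end}^\s_{q_0}(X(\cdot))^*p_1 = \d\,\mathrm{end}(X(\cdot))^*\bigl((dR_{q_0})^*p_1\bigr)$. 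Setting $P_1 = (dR_{q_0})^*p_1 \in T^*_{\varphi^X(1)}\D^s(M)$, this is precisely the hypothesis of Proposition \ref{prop:hamiltcardiffeo} on $\D^s(M)$, so $\varphi^X(\cdot)$ lifts to $(\varphi(\cdot),P(\cdot))$ with $K_V dR_{\varphi(t)}^*P(t) = X(t)$. Pushing down to the shape space: define $\mu(t) = \mu(\varphi(t),P(t)) = (dR_{\varphi(t)})^*P(t)$, which by Proposition \ref{prop_momentum} satisfies $\dot\mu(t) = \mathrm{ad}^*_{X(t)}\mu(t)$ and $K_V\mu(t) = X(t)$. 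One checks that $\mu(t)$ factors through the infinitesimal action at $q(t)$, i.e.\ $\mu(t) = \xi_{q(t)}^*p(t)$ for a curve $p(t)\in T^*_{q(t)}\s$ — this is where the shape-space structure enters, and it follows because $X(t)\in(\ker\xi_{q(t)})^\perp$ for a geodesic (the minimal-lift remark) together with $R_q = R_{q_0}\circ(\text{action})$-type compatibility. Then $\dot q(t) = \xi_{q(t)}X(t) = \xi_{q(t)}K_V\xi_{q(t)}^*p(t) = K_{q(t)}p(t) = \partial_p H^\s(q(t),p(t))$, and differentiating the identity $p(t)(Y) = \mu(t)(\widetilde Y)$ against test vectors (exactly the computation in the proof of Proposition \ref{prop_momentum2}, transported along the action) yields $\dot p(t) = -\partial_q H^\s(q(t),p(t)) = -\tfrac12\partial_q(K_{q(t)}p(t))^*p(t)$, i.e.\ $(\dot q,\dot p) = \nabla^\omega H^\s(q,p)$. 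The converse — that any integral curve of $\nabla^\omega H^\s$ projects to a geodesic — is the easier direction: given such $(q(\cdot),p(\cdot))$, set $X(t) = K_V\xi_{q(t)}^*p(t)$, verify $\dot q(t) = \xi_{q(t)}X(t)$ so $(q(\cdot),X(\cdot))$ is horizontal with minimal control, lift $p(t)$ to $P(t) = (dR_{\varphi(t)}^*)^{-1}\xi_{q(t)}^*p(t)$ on $\D^s(M)$, check the momentum equation $\dot\mu = \mathrm{ad}^*_X\mu$ holds (again the Proposition \ref{prop_momentum2} computation), apply Proposition \ref{prop_momentum2} to conclude $\varphi^X(\cdot)$ is a normal geodesic on $\D^s(M)$, hence $\d A(X) = \d\,\mathrm{end}(X)^*P_1$, and compose with $R_{q_0}$ to get $\d A(X) = \d\,\mathrm{end}^\s_{q_0}(X)^*p_1$, so $q(\cdot)$ is a geodesic.

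For the $k\geq 2$ part (existence, uniqueness, regularity and globality of the flow), the argument is a Cauchy–Lipschitz / ODE argument on the Hilbert manifold $T^*\s$. I would argue: the vector field $\nabla^\omega H^\s(q,p) = (K_qp, -\tfrac12\partial_q(K_qp)^*p)$ is of class $\mathcal{C}^{k-1}$ on $T^*\s$ (this was already noted: $(q,p)\mapsto K_qp$ is $\mathcal{C}^k$, so its $q$-derivative contracted with $p$ is $\mathcal{C}^{k-1}$), hence since $k-1\geq 1$ it is locally Lipschitz and the Cauchy problem has, for each initial datum, a unique maximal integral curve of class $\mathcal{C}^{k}$ in time, depending $\mathcal{C}^{k-1}$ on initial conditions, by the standard theorem on Banach manifolds. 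To upgrade local to global, use the conservation of $H^\s$ along integral curves: $\tfrac{d}{dt}H^\s(q(t),p(t)) = \omega(\nabla^\omega H^\s,\nabla^\omega H^\s) = 0$, so $p(t)(K_{q(t)}p(t)) = \langle X(t),X(t)\rangle$ is constant, which bounds $\|X(t)\|_V$ and hence $\|\dot\varphi(t)\|$ uniformly. Then completeness of $(\D^s(M),d_{SR})$ (Theorem \ref{th:complete}) prevents the lifted curve $\varphi(\cdot)$ from escaping to infinity in finite time, and the linear Cauchy problem defining $P(\cdot)$ along $\varphi(\cdot)$ (as in Proposition \ref{prop:hamiltcardiffeo}) is globally solvable; transporting back, $(q(t),p(t)) = (\varphi(t)\cdot q_0, \ldots)$ exists for all $t\in\R$. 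Finally, the re-parametrization statement — any subarc projects to a normal geodesic, and conversely — follows by combining the two directions just proved with the observation that affine reparametrizations preserve the geodesic property and rescale the Hamiltonian level.

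The main obstacle, and the step deserving the most care, is the factorization $\mu(t) = \xi_{q(t)}^*p(t)$ with $p(t)$ a genuine covector on $\s$ along $q(\cdot)$, together with the passage of the covector equation $\dot P = -(dX\circ\varphi)^*P$ on $\D^s(M)$ down to $\dot p = -\partial_q H^\s$ on $T^*\s$. On $\D^s(M)$ the analogue is clean because $dR_\varphi$ is an isomorphism; on $\s$ the infinitesimal action $\xi_q$ is only injective on $(\ker\xi_q)^\perp$ and $\xi_q^*$ need not be surjective, so one must argue that the momentum $\mu(t)$ produced from a \emph{geodesic} lift genuinely lies in the image of $\xi_{q(t)}^*$ and that the resulting $p(t)$ is well-defined, continuous, and $H^1$ — this uses that a geodesic has minimal control, the $\mathcal{C}^{k-1}$-regularity of $q\mapsto\xi_q$, and a density argument (test against $\Gamma^{s+1}(TM)$ then extend) identical in spirit to the last line of the proof of Proposition \ref{prop_momentum2}. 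Once this identification is in hand, the differentiation of $t\mapsto p(t)(Y(t))$ against vector fields along $q(\cdot)$ is the same bookkeeping as before, and the two theorems follow.
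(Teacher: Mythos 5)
Your overall architecture is the same as the paper's: reduce to $\D^s(M)$ through the identity $\mathrm{end}^\s_{q_0}=R_{q_0}\circ\mathrm{end}$, transport the problem with the momentum map $\mu^\s(q,p)=\xi_q^*p$, and invoke Propositions \ref{prop_momentum} and \ref{prop_momentum2}. You also correctly isolate the delicate step. But the justification you offer for that step is wrong, and this is a genuine gap. You claim the factorization $\mu(t)=\xi_{q(t)}^*p(t)$ ``follows because $X(t)\in(\ker\xi_{q(t)})^\perp$ for a geodesic,'' reinforced later by ``minimal control \dots and a density argument.'' Minimality of the control only tells you that $\langle X(t),\cdot\rangle=\mu(t)_{\vert V}$ annihilates $\ker(\xi_{q(t)}{}_{\vert V})$; by duality this places $\mu(t)_{\vert V}$ in the \emph{weak-$*$ closure} of the image of $(\xi_{q(t)}{}_{\vert V})^*$, not in the image itself (that image has no reason to be closed in this infinite-dimensional setting), and in any case it says nothing about how $\mu(t)$ acts on all of $\Gamma^s(TM)$. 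No density argument will manufacture a genuine covector $p(t)\in T^*_{q(t)}\s$ from this information alone.

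The fix is already sitting in your own setup, and it is what the paper does. The Lagrange multiplier $p_1$ lives in $T^*_{q_1}\s$ by definition, and the chain rule $\d R_{q_0}(\varphi)^*p=(\d R_{\varphi}^{-1})^*\xi_{\varphi\cdot q_0}^*p$ gives the factorization \emph{at the endpoint for free}: $\mu(1)=(\d R_{\varphi(1)})^*P_1=\xi_{q_1}^*p_1$. One then \emph{defines} $p(\cdot)$ as the solution of the linear adjoint Cauchy problem $\dot p(t)=-\partial_q(\xi_{q(t)}X(t))^*p(t)$ with datum $p_1$ at $t=1$, checks (the computation of Proposition \ref{prop:mom1}) that $t\mapsto\xi_{q(t)}^*p(t)$ satisfies the same transport equation $\dot\nu=\mathrm{ad}^*_{X(t)}\nu$ as $\mu(t)$, and concludes $\xi_{q(t)}^*p(t)=\mu(t)$ for all $t$ by uniqueness for that linear equation; the normal Hamiltonian equation for $(q,p)$ then follows exactly as in Proposition \ref{prop:mom2}. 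Your converse direction and your globality argument for $k\geq2$ are essentially sound, though the paper simply quotes completeness of the Hamiltonian flow on $T^*\D^s(M)$ rather than re-running energy conservation; note in any case that conservation of $H^\s$ alone cannot bound $\Vert p(t)\Vert$, since $K_q$ is only positive semi-definite, so the global solvability of the linear covector equation is the part of your argument actually doing the work there.
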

We say that $p(\cdot)$ is the \textit{normal covector} along the trajectoy.
\begin{remark}
One can then prove that any normal geodesic is, locally, a minimizing curve on small sub-intervals. This is a straightforward generalization of the proof of the same result for finite dimensional sub-riemannian manifolds: use the Hamiltonian flow allows the construction of a calibration of normal geodesics \cite{CTBOOK,MBOOK}.
\end{remark}
This theorem will be proved in Section \ref{sec:proof}, concurrently with Proposition \ref{propo:carachamilsing} found in the next paragraph. Along the way, we will also prove the following result.

\begin{proposition}\label{prop:liftnomalgeod}
Let  $\varphi(\cdot)$ be a horizontal curve from $e$ on $\D^s(M)$. Then $q(\cdot)=\varphi(\cdot)\cdot q_0$ is a normal geodesic on $\s$ if and only if there exists $p_0\in T_{q_0}^*\s$ such that $\varphi(\cdot)$ is a normal geodesic on $\D^s(M)$ with initial normal covector $P_0=\xi_{q_0}^*p_0$.
\end{proposition}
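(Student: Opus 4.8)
The plan is to transport the Lagrange-multiplier characterization of normal geodesics back and forth between $\s$ and $\D^s(M)$ via the factorization $\mathrm{end}^\s_{q_0} = R_{q_0}\circ\mathrm{end}$, using that $R_{q_0}$ is $\mathcal{C}^k$ on the image of $\mathrm{end}$ with $dR_{q_0}(e) = \xi_{q_0}$. First I would write down, for a fixed horizontal $X(\cdot)\in L^2(0,1;V)$ with $\varphi=\varphi^X(1)$ and $q_1 = \varphi\cdot q_0$, the chain rule
$$
d\,\mathrm{end}^\s_{q_0}(X(\cdot)) = dR_{q_0}(\varphi)\circ d\,\mathrm{end}(X(\cdot)),
$$
hence on duals
$$
d\,\mathrm{end}^\s_{q_0}(X(\cdot))^* = d\,\mathrm{end}(X(\cdot))^*\circ dR_{q_0}(\varphi)^*.
$$
So for any $p_1\in T^*_{q_1}\s$, setting $P_1 := dR_{q_0}(\varphi)^*p_1 \in T^*_\varphi\D^s(M)$, the normal geodesic equation on $\s$, namely $dA(X(\cdot)) = d\,\mathrm{end}^\s_{q_0}(X(\cdot))^*p_1$, becomes exactly $dA(X(\cdot)) = d\,\mathrm{end}(X(\cdot))^*P_1$, which by the "converse" remark in Section \ref{sec_geod} (following the definition of normal geodesic on $\D^s(M)$) says precisely that $\varphi^X(\cdot)$ is a normal geodesic on $\D^s(M)$ with final covector $P_1$. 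This gives one direction immediately, and the other direction is the same computation read backwards, since every step is an equivalence.

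The remaining task is to identify the final covector $P_1 = dR_{q_0}(\varphi)^*p_1$ with the quantity described in the statement, and to translate "final covector $P_1$" into "initial covector $P_0 = \xi_{q_0}^*p_0$". For the first point, I would use the momentum map $\mu^\s(q,p) = \xi_q^*p = p\diamond q$ from Lemma \ref{lem:Kq} together with the relation $dR_{q_0}(\varphi) = dR_{\varphi\cdot q_0}(e)\circ(\text{right translation})$; concretely, differentiating $R_{q_0}(\psi\circ\varphi) = R_{\varphi\cdot q_0}(\psi)$ at $\psi = e$ gives $dR_{q_0}(\varphi)\circ dR_\varphi(e) = \xi_{q_1}$ where $q_1 = \varphi\cdot q_0$, hence $dR_\varphi(e)^*\circ dR_{q_0}(\varphi)^* = \xi_{q_1}^*$, i.e. the $\D^s(M)$-momentum of $P_1$ is $\mu(\varphi,P_1) = \xi_{q_1}^*p_1 = p_1\diamond q_1$. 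For the second point, I would invoke Proposition \ref{prop_momentum}: along a normal geodesic on $\D^s(M)$ the momentum is transported by $\mu(t) = \varphi(t)_*\mu(0)$, so $\mu(\varphi,P_1) = \varphi(1)_*\mu(e,P_0)$; thus choosing $P_0$ with $\mu(e,P_0) = \xi_{q_0}^*p_0$ — which since $\mu(e,\cdot) = \mathrm{id}$ (or rather the canonical identification $dR_e = \mathrm{id}$) just means $P_0 = \xi_{q_0}^*p_0$ — and using equivariance of the momentum map, $\varphi(1)_*(\xi_{q_0}^*p_0) = \xi_{q_1}^*(\text{pushforward of }p_0)$, closes the loop. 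Running this identification backwards shows that a normal geodesic $\varphi(\cdot)$ on $\D^s(M)$ whose initial covector has the special form $\xi_{q_0}^*p_0$ has, at every time, momentum of the form $\xi_{q(t)}^*p(t)$, which by the converse Proposition \ref{prop_momentum2} and the first paragraph forces $q(\cdot) = \varphi(\cdot)\cdot q_0$ to be a normal geodesic on $\s$.

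I expect the main obstacle to be bookkeeping rather than anything deep: one must be careful that $\mathrm{Im}(\mathrm{end})\subset\D^{s+k}(M)$ so that $R_{q_0}$ is genuinely differentiable there (this is where the order-$\ell$ hypothesis and $k\geq 1$ enter), and one must check that the dense-range subtleties from the trichotomy (normal/abnormal/elusive) do not interfere — here they do not, because we are \emph{given} a Lagrange multiplier $p_1$ (resp. $P_1$) and only need the purely algebraic equivalence of the two multiplier equations, not the existence of a multiplier. A secondary subtlety is making precise the identification $\partial_p H^\s(q,p)\leftrightarrow K_qp$ used implicitly when one phrases the normal geodesic via $\nabla^\omega H^\s$ rather than via $(A,\mathrm{end}^\s_{q_0})$; I would sidestep this by working throughout with the Lagrange-multiplier formulation \eqref{eq:lagcondshsp} and only at the end invoking Theorem \ref{theo:carachamilnorm} to rephrase the conclusion, so that the equivalence with the Hamiltonian flow is imported as a black box rather than re-derived.
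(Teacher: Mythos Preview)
Your proposal is correct and mirrors the paper's own argument: both factor $\mathrm{end}^\s_{q_0}=R_{q_0}\circ\mathrm{end}$, identify $dR_\varphi(e)^*\,dR_{q_0}(\varphi)^*=\xi_{q_1}^*$ so that the Lagrange-multiplier condition on $\s$ becomes exactly the one on $\D^s(M)$ with momentum $\mu(1)=\xi_{q_1}^*p_1$, and then transport the momentum between times $0$ and $1$. The only place your sketch is lighter than the paper is the equivariance step---that the range of $\xi^*_{q(\cdot)}$ is preserved under $\varphi(\cdot)_*$---which is not literally contained in Propositions~\ref{prop_momentum}--\ref{prop_momentum2} (those live purely on the $\D^s(M)$ side); the paper supplies this separately via its auxiliary Propositions~\ref{prop:mom1}--\ref{prop:mom2} by the direct computation $\dot\mu^\s=\mathrm{ad}^*_X\mu^\s$, which amounts to the one-line check $\xi_{\varphi\cdot q}(\varphi_*Y)=d(\varphi\cdot)_q\,\xi_qY$ underlying your ``equivariance of the momentum map'' claim.
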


\paragraph{Singular horizontal systems.} Define the \emph{abnormal Hamiltonian} $H_0^\s:T^*\s\times V\rightarrow\R$ by
$$
H^\s_0(q,p,X)=p(\xi_qX)=\xi_q^*p(X).
$$
For fixed $X$, the mapping $(q,p)\mapsto H^\s_0(q,p,X)$ is of class $\mathcal{C}^k$ and $\partial_pH^\s_0(q,p,X)$ can be identified with $\xi_qX$. This mapping therefore admits a symplectic gradient, denoted $\nabla^\omega H^\s_0(q,p,X)\in T_{(q,p)}T^*\s$ and given, in canonical coordinates, by the formula
$$
\nabla^\omega H^\s_0(q,p,X)=(\xi_qX,-\partial_q(\xi_qX)^*p).
$$
This gradient is of class $\mathcal{C}^{k-1}$ in $(q,p)$ and linear in $p$. We have the following result.
\begin{proposition}\label{propo:carachamilsing}
A horizontal system $(q(\cdot),X(\cdot))$ is singular if and only if there exists $p:t\mapsto p(t)\in T_{q(t)}^*\s\setminus\{0\}$ of Sobolev class $H^1$ in time such that, for almost every time $t$ in $[0,1]$, the \textbf{abnormal Hamiltonian equations}
$$
(\dot{q}(t),\dot{p}(t))=\nabla^\omega H^\s_0(q(t),p(t),X(t))=\left(\xi_{q(t)}X(t),-\partial_q(\xi_{q(t)}X(t))^*p(t)
\right),$$
is satisfied, with $p(0)\neq0$ and for almost every time $t$ in $[0,1]$, $\xi_{q(t)}^*p(t)=0$, that is,
$$
\xi_{q(t)}(V)\subset \ker p(t).
$$
In this case, $\d\,\mathrm{end}^\s_{q(0)}(X(\cdot)))^*p(1)=0.$
\end{proposition}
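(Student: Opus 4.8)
The plan is to compute the differential of $\mathrm{end}^\s_{q_0}$ and its transpose explicitly through a linearised flow equation on $\s$, to identify the resulting dual curve $p(\cdot)$ as the solution of a backward linear ODE, and then to recognise that ODE together with the constraint $\xi^*_{q(t)}p(t)=0$ as the abnormal Hamiltonian system. Recall (Section~\ref{geodshsp}) that $(q(\cdot),X(\cdot))$ is \emph{singular} precisely when $\d\,\mathrm{end}^\s_{q(0)}(X(\cdot))^*p_1=0$ for some $p_1\in T^*_{q(1)}\s\setminus\{0\}$, so everything reduces to an explicit description of this transpose.

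First I would linearise the flow. From Section~\ref{geodshsp}, $\mathrm{end}^\s_{q_0}(X(\cdot))=\varphi^X(1)\cdot q_0$ is of class $\mathcal{C}^k$ (it factors as $R_{q_0}\circ\mathrm{end}$ with $\mathrm{Im}(\mathrm{end})\subset\D^{s+k}(M)$, where $R_{q_0}$ is $\mathcal{C}^k$). Given $Y(\cdot)\in L^2(0,1;V)$, put $q_a(\cdot)=\varphi^{X+aY}(\cdot)\cdot q_0$; differentiating the Cauchy problem $\dot q_a(t)=\xi_{q_a(t)}(X(t)+aY(t))$, $q_a(0)=q_0$, in $a$ at $a=0$ shows that $\delta q(\cdot)=\partial_a q_a(\cdot)|_{a=0}$ (a curve over $q(\cdot)$) is the unique $H^1$ solution of
$$
\delta q(0)=0,\qquad \dot{\delta q}(t)=\partial_q\big(\xi_{q(t)}X(t)\big)\cdot\delta q(t)+\xi_{q(t)}Y(t),
$$
with $\d\,\mathrm{end}^\s_{q_0}(X(\cdot))\cdot Y(\cdot)=\delta q(1)$; note $t\mapsto\partial_q(\xi_{q(t)}X(t))$ is $L^2$ in time because $\xi$ is linear in its vector-field slot and $X(\cdot)\in L^2$, so this linear ODE is well posed.

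The heart of the argument is the duality step. Fix $p_1\in T^*_{q(1)}\s$ and let $p(\cdot)$ be the unique $H^1$ solution over $q(\cdot)$ of the backward problem $p(1)=p_1$, $\dot p(t)=-\big(\partial_q(\xi_{q(t)}X(t))\big)^*p(t)$; by uniqueness for this linear ODE, $p(\cdot)\not\equiv 0\iff p(1)\neq0\iff p(0)\neq0$. Differentiating $p(t)(\delta q(t))$ and substituting both Cauchy problems, the two $\partial_q(\xi_{q(t)}X(t))$-contributions cancel and one is left with $\frac{d}{dt}\big(p(t)(\delta q(t))\big)=p(t)(\xi_{q(t)}Y(t))=(\xi^*_{q(t)}p(t))(Y(t))$. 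Integrating over $[0,1]$ and using $\delta q(0)=0$ gives $p_1(\delta q(1))=\int_0^1(\xi^*_{q(t)}p(t))(Y(t))\,\d t$ for every $Y(\cdot)\in L^2(0,1;V)$, which identifies $\d\,\mathrm{end}^\s_{q_0}(X(\cdot))^*p_1$, as an element of the dual of $L^2(0,1;V)$, with the curve $t\mapsto\xi^*_{q(t)}p(t)$. Hence $\d\,\mathrm{end}^\s_{q_0}(X(\cdot))^*p_1=0$ if and only if $\xi^*_{q(t)}p(t)=0$, i.e. $\xi_{q(t)}(V)\subset\ker p(t)$, for almost every $t$.

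To conclude, $(q(\cdot),X(\cdot))$ is singular iff there is $p_1\in T^*_{q(1)}\s\setminus\{0\}$ with $\d\,\mathrm{end}^\s_{q_0}(X(\cdot))^*p_1=0$, hence — by the previous step — iff there is an $H^1$ curve $p(\cdot)$ over $q(\cdot)$ with $p(0)\neq0$, $\dot p(t)=-(\partial_q(\xi_{q(t)}X(t)))^*p(t)$ and $\xi^*_{q(t)}p(t)=0$ a.e.; since $\partial_pH^\s_0(q,p,X)=\xi_qX$ and $\partial_qH^\s_0(q,p,X)=(\partial_q(\xi_qX))^*p$, this ODE together with horizontality $\dot q(t)=\xi_{q(t)}X(t)$ is exactly $(\dot q,\dot p)=\nabla^\omega H^\s_0(q,p,X)$, and $\d\,\mathrm{end}^\s_{q(0)}(X(\cdot))^*p(1)=0$ simply restates $\xi^*_{q(t)}p(t)=0$. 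The step I expect to be the main obstacle is the first one: justifying rigorously that $a\mapsto q_a(\cdot)$ is $\mathcal{C}^1$ and that $\delta q$ satisfies the linearised equation, since $H^1(0,1;\s)$ need not carry a manifold structure; this is handled by transferring all differentiability onto the $\mathcal{C}^k$ flow and endpoint maps of Section~\ref{sec2} on $\D^{s+k}(M)$ and onto $R_{q_0}$, together with Gronwall-type uniqueness — or, alternatively, by writing $\d\,\mathrm{end}^\s_{q_0}(X)^*=\d\,\mathrm{end}(X)^*\circ\d R_{q_0}(\varphi^X(1))^*$ and invoking Proposition~\ref{prop:hamiltcardiffeo} on $\D^s(M)$ (the route that also yields Proposition~\ref{prop:liftnomalgeod}), at the cost of some care with the kernel of $\xi^*_{q(1)}$. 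A secondary, purely bookkeeping point is the $L^2$-integrability of $t\mapsto\partial_q(\xi_{q(t)}X(t))$ needed to place $p(\cdot)$ in $H^1$.
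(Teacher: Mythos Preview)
Your argument is correct and is the classical adjoint/PMP computation carried out directly on $\s$: linearise the flow, solve the backward adjoint equation for $p(\cdot)$, integrate by parts to identify $\d\,\mathrm{end}^\s_{q_0}(X)^*p_1$ with $t\mapsto\xi_{q(t)}^*p(t)$ in $L^2(0,1;V)^*$, and read off the abnormal Hamiltonian system. The technical point you flag (differentiability of $a\mapsto q_a(\cdot)$ when $H^1(0,1;\s)$ carries no manifold structure) is genuine, but your proposed fix---defining $\delta q(t)$ through the $\mathcal{C}^k$ time-$t$ endpoint map $X\mapsto\varphi^X(t)\cdot q_0$ rather than through a curve-space derivative---is the right one and suffices.

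The paper takes a genuinely different route: it does \emph{not} linearise on $\s$ at all, but instead lifts everything to $\D^s(M)$ via the factorisation $\mathrm{end}^\s_{q_0}=R_{q_0}\circ\mathrm{end}$ and the momentum map $\mu^\s(q,p)=\xi_q^*p$. Concretely, it shows (Propositions~\ref{prop:mom1} and~\ref{prop:mom2}) that a curve $(q(\cdot),p(\cdot))$ satisfies the abnormal Hamiltonian equations on $T^*\s$ if and only if $\varphi^X(\cdot)$ is a singular curve on $\D^s(M)$ with momentum $\mu(t)=\xi_{q(t)}^*p(t)$; then it observes that $\d\,\mathrm{end}^\s_{q_0}(X)^*p_1=\d\,\mathrm{end}(X)^*P_1$ with $P_1=(\d R_{\varphi^X(1)}^{-1})^*\xi_{q_1}^*p_1$, so singularity on $\s$ is exactly singularity on $\D^s(M)$ with terminal momentum $\mu(1)=\xi_{q_1}^*p_1$, and Proposition~\ref{prop:hamiltcardiffeo} closes the loop. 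This is precisely the alternative you sketch in your last paragraph. What the paper's route buys is that Propositions~\ref{prop:liftnomalgeod} and~\ref{prop:liftabnomalgeod} fall out simultaneously, and all delicate differentiability questions are pushed onto $\D^s(M)$ where they were already settled in Section~\ref{sec2}; what your route buys is a self-contained argument on $\s$ that does not require the momentum-map machinery or the prior analysis on $\D^s(M)$.
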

We say that $p(\cdot)$ is a \textit{singular covector} along the trajectory. This will be proved in Section \ref{sec:proof}, along with the following result.

\begin{proposition}\label{prop:liftabnomalgeod}
Let  $\varphi(\cdot)=\varphi^X(\cdot)$ be a horizontal curve from $e$ on $\D^s(M)$. Then the horizontal system $(q(\cdot),X(\cdot))$ with $q(\cdot)=\varphi(\cdot)\cdot q_0$ is singular on $\s$ if and only if there exists $p_0\in T_{q_0}^*\s$ such that $\varphi(\cdot)$ is a singular curve on $\D^s(M)$ with initial singular covector $P_0=\xi_{q_0}^*p_0$.
\end{proposition}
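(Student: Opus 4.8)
The plan is to reduce the statement on $\s$ to the corresponding statement on $\D^s(M)$ (the singular curve characterization of Section \ref{sec2}, namely Proposition \ref{prop:hamiltcardiffeo} with $\lambda=0$), using the momentum map $\mu^\s$ and the compatibility of the two actions. Recall that we have two "diamond" operations: $\xi_q^*:T_q^*\s\to\Gamma^s(TM)^*$ for the shape space, and $dR_\varphi^*:T_\varphi^*\D^s(M)\to\Gamma^s(TM)^*$ for the group, and that $R_{q_0}=R_q\circ\mathrm{end}$ so that $\mathrm{end}^\s_{q_0}=R_{q_0}\circ\mathrm{end}$ with $R_{q_0}:\varphi\mapsto\varphi\cdot q_0$. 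The central identity I would establish first is the chain rule relating the two: differentiating $\varphi\mapsto\varphi\cdot q_0$ and using smoothness on the right (Condition 3 of Definition \ref{shapespace}), one gets that for $\varphi\in\D^{s}(M)$ with $\varphi\cdot q_0=q$,
$$
\xi_q^* p \;=\; dR_\varphi^*\,\bigl(dR_q(\varphi)^* p\bigr)\qquad\text{for }p\in T_q^*\s,
$$
i.e. pulling back a covector on $\s$ along $R_{q_0}$ and then applying the group momentum map is the same as applying $\xi^*$ at $q$ with the covector $dR_q(\varphi)^*p$. (Concretely $dR_q(\varphi)^*p$ is the natural covector at $\varphi\cdot q_0$ obtained by transporting $p$; at $\varphi=e$ it is just $p$ itself and the identity reads $\xi_{q_0}^* p = dR_e^* p = P$.)

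Next I would run the two directions. For the "if" direction, suppose $\varphi(\cdot)=\varphi^X(\cdot)$ is singular on $\D^s(M)$ with initial singular covector $P_0=\xi_{q_0}^*p_0$. By Proposition \ref{prop:hamiltcardiffeo} (the $\lambda=0$ case) there is a lift $P(\cdot)$ with $P(0)=P_0$, $\dot P(t)=-(dX(t)\circ\varphi(t))^*P(t)$ and $V\subset\ker P(t)$ for a.e.\ $t$; moreover by Proposition \ref{prop_momentum} the momentum $\mu(t)=dR_{\varphi(t)}^*P(t)$ satisfies $\mu(t)=\varphi(t)_*\mu(0)$, and $\mu(0)=dR_e^*P_0=P_0=\xi_{q_0}^*p_0$. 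Now I define $p(t)\in T_{q(t)}^*\s$ by $p(t)=(dR_{q}(\varphi(t))^*)^{-1}$ applied appropriately — the cleanest route is to use Proposition \ref{prop_momentum2} as a template: I want $p(\cdot)$ of class $H^1$ with $\xi_{q(t)}^*p(t)=\mu(t)$ (this forces $\xi_{q(t)}(V)\subset\ker p(t)$ since $V\subset\ker\mu(t)$, because $\mu(t)$ is pushed forward from $\mu(0)$ which kills $V$ after transport — here one uses $V\hookrightarrow\Gamma^{s+k}(TM)$ so that $\mu(t)(Y)$ makes sense for $Y\in V$) and then check $\dot p(t)=-\partial_q(\xi_{q(t)}X(t))^*p(t)$ by exactly the computation in the proof of Proposition \ref{prop_momentum2}, replacing $dR_\varphi$ by $\xi_q$ throughout. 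That the assignment $t\mapsto p(t)$ with $\xi_{q(t)}^*p(t)=\mu(t)$ is well-defined and $H^1$ needs $\xi_{q(t)}^*$ to be injective with bounded inverse on its image; here I would invoke that for geodesic/singular analysis we may restrict to $p(t)$ in the appropriate complement, or more honestly, I would instead \emph{define} $p(t)$ directly via transport by the flow, $p(t)=dR_{q}(\varphi(t))^{-*}p_0$, and verify $\xi_{q(t)}^*p(t)=\mu(t)$ from the first identity above — this sidesteps inverting $\xi^*$. Finally, $\d\,\mathrm{end}^\s_{q_0}(X)^*p(1)=\d\,\mathrm{end}(X)^*\,dR_q(\varphi(1))^*p(1)=\d\,\mathrm{end}(X)^*P(1)=0$ by Proposition \ref{prop:hamiltcardiffeo}, and Proposition \ref{propo:carachamilsing} then gives that $(q(\cdot),X(\cdot))$ is singular. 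For the "only if" direction I reverse this: given a singular system $(q(\cdot),X(\cdot))$ with singular covector $p(\cdot)$ from Proposition \ref{propo:carachamilsing}, set $P(t)=dR_q(\varphi(t))^{-*}p(t)\in T^*_{\varphi(t)}\D^s(M)$ (transporting back), so $P(0)=p(0)=:$ a covector at $e$... — more precisely, set $p_0=p(0)$, $P_0=\xi_{q_0}^*p_0$; then $P(t)$ so defined has $\mu(t):=dR_{\varphi(t)}^*P(t)=\xi_{q(t)}^*p(t)$ satisfying the momentum equation $\dot\mu=\mathrm{ad}^*_X\mu$ (pull this from $\dot p=-\partial_q(\xi_q X)^*p$ by the same diamond-bracket computation) and $K_V\mu(t)=0$ since $\xi_{q(t)}(V)\subset\ker p(t)$; Proposition \ref{prop_momentum2} with $\lambda=0$ then says $\varphi(\cdot)$ is a singular curve on $\D^s(M)$ with covector $P(t)=dR_{\varphi(t)}^{-*}\mu(t)$, and its initial value is $P_0=\xi_{q_0}^*p_0$ as required.

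The main obstacle is the regularity/well-definedness of $p(\cdot)$: unlike on $\D^s(M)$, where $dR_\varphi^*$ is invertible on the whole cotangent space, $\xi_q^*$ is generally far from surjective or injective in a useful topology, so I cannot simply write $p(t)=(\xi_{q(t)}^*)^{-1}\mu(t)$. The fix is to never invert $\xi^*$: define $p(\cdot)$ by transporting $p_0$ with the (co)flow of $\varphi(\cdot)$ acting on $\s$ — this is legitimate because $\varphi(t)$ acts on $\s$ by a $\mathcal{C}^k$ diffeomorphism $R_q(\varphi(t))$ with $k\ge1$, so $t\mapsto p(t)$ inherits $H^1$ regularity from $\varphi(\cdot)\in H^1(0,1;\D^s(M))$ and the $\mathcal{C}^k$ dependence in Condition 3 — and then prove the identity $\xi_{q(t)}^*p(t)=\mu(t)$ rather than using it as a definition. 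The remaining computations (that the transported $p(t)$ solves the abnormal Hamiltonian equation, and the endpoint condition $\d\,\mathrm{end}^\s_{q_0}(X)^*p(1)=0$) are the same bracket manipulations as in the proof of Proposition \ref{prop_momentum2} together with the chain rule $\mathrm{end}^\s_{q_0}=R_{q_0}\circ\mathrm{end}$, so I would present them briefly and refer back. Note also that Proposition \ref{prop:liftnomalgeod} has an essentially identical proof with $\lambda=1$ and $H$ in place of $H_0$, so I would prove both together in Section \ref{sec:proof} as announced.
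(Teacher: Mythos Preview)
Your strategy is the same as the paper's: both directions go through the momentum map, the identity $\xi_{\pi(\hat q)}^*=d\pi(\hat q)^*\xi_{\hat q}^*$ (or its group analogue), the chain rule $\mathrm{end}^\s_{q_0}=R_{q_0}\circ\mathrm{end}$, and Propositions \ref{prop_momentum}--\ref{prop_momentum2}. The paper packages this as two auxiliary propositions (\ref{prop:mom1} and \ref{prop:mom2}) showing that solutions of the abnormal Hamiltonian equations on $\s$ correspond exactly to singular curves on $\D^s(M)$ with initial momentum of the form $\xi_{q_0}^*p_0$, and then a short Step 2 translating the endpoint Lagrange-multiplier condition through $R_{q_0}$; you are doing the same computations in a slightly different order.

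The one place where the paper is cleaner is precisely the obstacle you flag. You spend effort defining $p(t)$ by ``transport with the coflow of $\varphi(\cdot)$ acting on $\s$'', and your notation $dR_q(\varphi(t))^{-*}$ does not parse ($R_q(\varphi(t))$ is a point of $\s$, not a map; you mean the cotangent lift of $q'\mapsto\varphi(t)\cdot q'$). The paper bypasses this entirely: in Proposition \ref{prop:mom2} it simply \emph{defines} $p(\cdot)$ as the unique solution of the linear Cauchy problem $p(0)=p_0$, $\dot p(t)=-\partial_q(\xi_{q(t)}X(t))^*p(t)$, which exists and is $H^1$ because the equation is linear with $L^2$ coefficients. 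Then one computes $\dot\mu^\s(t)=\mathrm{ad}^*_{X(t)}\mu^\s(t)$ exactly as you sketch, and uniqueness of the pushforward equation gives $\mu^\s(t)=\mu(t)$. This avoids inverting $\xi^*$, avoids invoking the $\mathcal{C}^k$-diffeomorphism property of the action, and makes the regularity of $p(\cdot)$ immediate. I would recommend you adopt that definition of $p(\cdot)$; everything else in your proposal then goes through verbatim.
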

A special case of this result was used in \cite{AT} to give examples of singular curves on $\D^s(M)$.

\subsubsection{Proof of Theorem \ref{theo:carachamilnorm} and Propositions \ref{prop:liftnomalgeod}, \ref{propo:carachamilsing}, and \ref{prop:liftabnomalgeod}} \label{sec:proof}

The purpose of this section is to prove, using the momentum map, that normal geodesics and singular curves on $\s$ are exactly those that can respectively be lifted to normal geodesics and singular curves on $\D^s(M)$. This is a well-known result in the Riemannian framework of Clebsch optimal control \cite{CH,MRBOOK,RV}.

\paragraph{Step 1: Momentum formulation.} Recall that the \textit{momentum map} associated to the group action of $\D^s(M)$ over $\s$ is the mapping $\mu^\s:T^*\s\rightarrow \Gamma^s(TM)^*=\Gamma^{-s}(T^*M)$ given by
$$
\mu^\s(q,p)=p\diamond q=\xi_q^*p.
$$

\begin{proposition}\label{prop:mom1}
Consider a solution $(q(\cdot),p(\cdot))$ to the normal Hamiltonian equation $(\dot{q}(t),\dot{p}(t))=\nabla^\omega H^\s(q(t),p(t))$ and let $X(\cdot)=K_V\xi_{q(t)}^*p(t)$ and $\mu^\s(t)=\mu^\s(q(t),p(t))$ the momentum along the trajectory. Then, for almost every $t$,
$$
\dot{\mu}^\s(t)=\mathrm{ad}^*_{X(t)}\mu(t)
$$
in the weak sense. In particular, $\varphi^X(\cdot)$ is a normal geodesic on $\D^s(M)$ with initial covector $P(0)=\xi_{q(0)}^*p(0)$ and with momentum $\mu(t)=\mu^\s(t)$. 

The same is true for singular curves: if $p(\cdot)$ is a covector associated to a singular system $(q(\cdot),X(\cdot))$, then $\varphi^X$ is a singular curve in $\D^s(M)$ with singular momentum $\mu(t)=\mu^\s(t)$ and initial singular covector $P(0)=\xi_{q(0)}^*p(0)$.
\end{proposition}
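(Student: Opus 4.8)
The plan is to reduce the statement to Proposition \ref{prop_momentum2}, the converse momentum formula on $\D^s(M)$: once I know that the momentum $\mu^\s(t)=\xi_{q(t)}^*p(t)$ obeys the coadjoint transport equation $\dot\mu^\s(t)=\mathrm{ad}^*_{X(t)}\mu^\s(t)$ in the weak sense (i.e.\ tested against fixed vector fields $Y\in\Gamma^{s+1}(TM)$), everything else is bookkeeping. First I would collect the preliminaries. By Condition 4 of Definition \ref{shapespace}, $q(\cdot)=\varphi^X(\cdot)\cdot q_0$ with $\varphi^X(\cdot)\in H^1(0,1;\D^s(M))$ and logarithmic velocity $X(\cdot)$, which lies in $L^2(0,1;V)$ (it is even continuous, being $K_V\xi_{q(\cdot)}^*p(\cdot)$ with $q(\cdot),p(\cdot)$ continuous). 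The curve $t\mapsto\mu^\s(t)=\xi_{q(t)}^*p(t)$ is continuous with values in $\Gamma^s(TM)^*$. In the normal case $K_V\mu^\s(t)=X(t)$ by definition of $X(\cdot)$, so we will be in the situation $\lambda=1$ of Proposition \ref{prop_momentum2}; in the singular case the defining relation $\xi_{q(t)}(V)\subset\ker p(t)$ says exactly that $\mu^\s(t)$ annihilates $V$, i.e.\ $K_V\mu^\s(t)=0$, the situation $\lambda=0$.

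The central observation is that, in \emph{both} cases, the equation satisfied by $p(\cdot)$ has the common form
$$\dot p(t)=-\partial_q\bigl(\xi_{q(t)}X(t)\bigr)^{\!*}p(t),$$
where $X(t)$ is held \emph{fixed} during the $q$-differentiation. For a singular system this is literally the abnormal Hamiltonian equation of Proposition \ref{propo:carachamilsing}. For a solution of the normal Hamiltonian equation, this is the identity $\partial_qH^\s(q,p)=\partial_q(\xi_qX)^{\!*}p\big|_{X=K_V\xi_q^*p}$, which I would prove either directly — expanding $\partial_q\bigl(\xi_qK_V\xi_q^*p\bigr)$ by the product rule, pairing the result with $p$, and using the symmetry of $K_V$ to see that the contribution of the outer $\xi_q$ equals that of the inner $\xi_q^*$, so that the two combine (against the $\tfrac12$ in $H^\s$) into the single frozen-$X$ term — or more conceptually by writing $H^\s(q,p)=\max_{X\in V}\bigl(p(\xi_qX)-\tfrac12\langle X,X\rangle\bigr)$, the maximum being attained at $\widehat X=K_V\xi_q^*p$, and invoking the envelope theorem (this is the Pontryagin-type statement underlying the whole construction).

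With this in hand, the transport computation is a single line valid in both cases. Fixing $Y\in\Gamma^{s+1}(TM)$ and differentiating $t\mapsto\mu^\s(t)(Y)=p(t)(\xi_{q(t)}Y)$, the product rule together with $\dot q=\xi_qX$ and the $p$-equation above gives
$$\frac{d}{dt}\mu^\s(t)(Y)=p(t)\Bigl(D_q(\xi_qY)[\xi_qX(t)]-D_q(\xi_qX(t))[\xi_qY]\Bigr)\Big|_{q=q(t)}.$$
Now $q\mapsto\xi_qX$ and $q\mapsto\xi_qY$ are the fundamental vector fields of the \emph{left} $\D^s(M)$-action on $\s$, so their Lie bracket on $\s$ is $q\mapsto\xi_q[X,Y]$ — the sign coming out right because the Lie-algebra bracket on $\D^s(M)$ is the \emph{opposite} of the vector-field bracket, which undoes the minus sign in the fundamental-vector-field anti-homomorphism. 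Hence the bracket in the parenthesis equals $\xi_{q(t)}[X(t),Y]$, and the right-hand side is $p(t)(\xi_{q(t)}[X(t),Y])=\mu^\s(t)([X(t),Y])=\mathrm{ad}^*_{X(t)}\mu^\s(t)(Y)$. As $Y$ is arbitrary, $\dot\mu^\s=\mathrm{ad}^*_X\mu^\s$ weakly.

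Finally, I would apply Proposition \ref{prop_momentum2} to $\varphi^X(\cdot)$, with $\lambda=1$ (normal) or $\lambda=0$ (singular) and momentum $\mu^\s(\cdot)$: the hypotheses have just been checked, so $\varphi^X(\cdot)$ is a normal geodesic (resp.\ a singular curve) on $\D^s(M)$ with covector $P(t)=(dR_{\varphi^X(t)}^*)^{-1}\mu^\s(t)$ and momentum $\mu(t)=dR_{\varphi^X(t)}^*P(t)=\mu^\s(t)$; evaluating at $t=0$ gives $P(0)=\mu^\s(0)=\xi_{q(0)}^*p(0)$, as claimed. The main obstacle I expect is the second step, both conceptually (recognizing that the normal Hamiltonian equation on $T^*\s$ is the abnormal one with the optimal control $K_V\xi_q^*p$) and technically: the differentiations must be organized so as to fall within the available regularity — $(q,p)\mapsto K_qp$ is only $\mathcal C^k$ and $q\mapsto\xi_q$ only $\mathcal C^{k-1}$, and $H^1(0,1;\s)$ need not be a manifold — which is why the whole argument is run by pairing against fixed smooth vector fields rather than manipulating $\xi_q^*$ as an operator-valued function of $q$.
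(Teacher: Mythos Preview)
Your proposal is correct and follows essentially the same route as the paper: differentiate $\mu^\s(t)(Y)=p(t)(\xi_{q(t)}Y)$ against a fixed $Y\in\Gamma^{s+1}(TM)$, recognize the resulting expression as $p(t)(\xi_{q(t)}[X(t),Y])$ via the fundamental-vector-field bracket identity, and then invoke Proposition \ref{prop_momentum2}. You are in fact more explicit than the paper on one point: the identity $\partial_qH^\s(q,p)=\partial_q(\xi_qX)^{\!*}p\big|_{X=K_V\xi_q^*p}$, which you justify via the envelope/product-rule argument, is simply assumed in the paper's proof of this proposition (it is spelled out only later, in the proof of Proposition \ref{prop:mom2}).
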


\begin{proof}
Fix $Y\in \Gamma^{s+1}(TM)$. Then
$$
\begin{aligned}
\dot{\mu}^\s(t)(Y)&=p(t)\Big( \partial_q(\xi_{q(t)}(Y))(\xi_{q(t)}(X(t)))- \partial_q(\xi_{q(t)}(X(t)))(\xi_{q(t)}(Y))\Big)\\
&=p(t)([\xi(X(t)),\xi(Y)](q(t)))\\
&=p(t)(\xi_{q(t)}([X(t),Y]))=ad_{X(t)}^*\mu(t)(Y).
\end{aligned}
$$
In the normal case, $t\mapsto \mu(t)$ then satisfies the necessary and sufficient condition for $X(t)=K_V\mu(t)$ to be the control of a Hamiltonian geodesic $t\mapsto(\varphi(t),P(t))$ on $\mathcal{D}^s(M)$ with initial covector $\mu(0)=P(0)$. In the normal case, since $X(t)=\xi_{q(t)}^*p(t)$, $\varphi(\cdot)$ is also the minizing lift of $q$ starting at $e$.

In the singular case, $t\mapsto \mu(t)$ satisfies the necessary and sufficient condition for $X(t)$ to be the control of a singular curve $t\mapsto(\varphi(t),P(t))$ on $\mathcal{D}^s(M)$ with initial covector $P(0)=\mu(0)$.
\end{proof}

We will also need the converse.
\begin{proposition}\label{prop:mom2}
Let $\varphi(\cdot)=\varphi^X(\cdot)$ be a normal geodesic (resp. a singular curve) on $\D^s(M)$ with momentum $\mu(\cdot)$. Assume that $\mu(0)=\xi_{q_0}^*p_0$ for some $p_0\in T^*_{q_0}\s$, and define the horizontal curve $q(\cdot)=\varphi(\cdot)\cdot q_0$. Then there exists $p:t\mapsto p(t)\in T^*_{q(t)}\s$ with $p(0)=p_0$ such that $(q(\cdot),p(\cdot))$ satisfies the normal  (resp. abnormal) Hamiltonian equations. Moreover, $\mu^\s(q(t),p(t))=\mu(t)$ for all $t$.
\end{proposition}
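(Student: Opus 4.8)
The plan is to build the covector $p(\cdot)$ as the solution of the adjoint equation along $q(\cdot)$, to identify its momentum with $\mu(\cdot)$ via the momentum equation, and then to read off the Hamiltonian equations, handling the normal and singular cases in parallel. Throughout, write $q(\cdot)=\varphi(\cdot)\cdot q_0$ and recall $\dot q(t)=\xi_{q(t)}X(t)$, and note that $\varphi(t)\in\mathrm{Im}(\mathrm{end})\subset\D^{s+k}(M)$ for every $t$, so that $A_{\varphi(t)}:q\mapsto\varphi(t)\cdot q$ is a $\mathcal C^k$-diffeomorphism of $\s$ (with inverse $A_{\varphi(t)^{-1}}$).

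\textbf{Step 1: construction of $p(\cdot)$.} Since $X(\cdot)\in L^2(0,1;V)$, $q(\cdot)$ is continuous, and $(q,X)\mapsto\xi_qX$ is $\mathcal C^k$ with $k\geq1$, the operator $t\mapsto\partial_q(\xi_{q(t)}X(t))^{*}$ is $L^2$ in time along $q(\cdot)$; by the measurable Cauchy--Lipschitz theorem there is a unique $p(\cdot)$, of Sobolev class $H^1$ in time with $p(t)\in T^*_{q(t)}\s$, solving $\dot p(t)=-\partial_q(\xi_{q(t)}X(t))^{*}p(t)$ and $p(0)=p_0$. (Equivalently one may set $p(t)=\bigl((dA_{\varphi(t)}(q_0))^{-1}\bigr)^{*}p_0$: differentiating $\frac{d}{dt}A_{\varphi(t)}(q)=\xi_{A_{\varphi(t)}(q)}X(t)$ with respect to $q$ shows $v(t):=dA_{\varphi(t)}(q_0)w$ satisfies $\dot v(t)=\partial_q(\xi_{q(t)}X(t))\,v(t)$, so $t\mapsto p(t)(v(t))\equiv p_0(w)$ is constant, yielding the same equation; this also shows $p(t)\neq0$ whenever $p_0\neq0$, which holds in the singular case since there $P(0)=\mu(0)=\xi_{q_0}^{*}p_0\neq0$.) By construction $(q(\cdot),p(\cdot))$ already satisfies the \emph{abnormal} Hamiltonian equations $(\dot q,\dot p)=\nabla^\omega H^\s_0(q,p,X)$ of Proposition \ref{propo:carachamilsing}.

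\textbf{Step 2: momentum identification.} Put $\mu^\s(t):=\mu^\s(q(t),p(t))=\xi_{q(t)}^{*}p(t)$. For a fixed $Y\in\Gamma^{s+1}(TM)$, the computation in the proof of Proposition \ref{prop:mom1} — which uses only $\dot q=\xi_qX$ and the adjoint equation of Step 1 — gives, in the weak sense,
$$
\dot\mu^\s(t)(Y)=p(t)\bigl([\xi(X(t)),\xi(Y)](q(t))\bigr)=p(t)\bigl(\xi_{q(t)}[X(t),Y]\bigr)=\mathrm{ad}^{*}_{X(t)}\mu^\s(t)(Y).
$$
Thus $\mu^\s(\cdot)$ solves the linear equation $\dot\mu=\mathrm{ad}^{*}_{X}\mu$ with $\mu^\s(0)=\xi_{q_0}^{*}p_0=\mu(0)$; by Proposition \ref{prop_momentum}, $\mu(\cdot)$ solves the same equation with the same initial datum, so uniqueness forces $\mu^\s(q(t),p(t))=\mu(t)$ for all $t\in[0,1]$, which is the ``moreover'' part.

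\textbf{Step 3: Hamiltonian equations.} In the singular case, $\varphi(\cdot)$ singular on $\D^s(M)$ means $K_V\mu(t)=0$, i.e. $V\subset\ker\mu(t)=\ker\xi_{q(t)}^{*}p(t)$, i.e. $\xi_{q(t)}(V)\subset\ker p(t)$; together with Step 1 this is exactly the characterization of singular systems in Proposition \ref{propo:carachamilsing}, and $\d\,\mathrm{end}^\s_{q_0}(X(\cdot))^{*}p(1)=0$. In the normal case, $X(t)=K_V\mu(t)=K_V\xi_{q(t)}^{*}p(t)$ by Step 2, so $\dot q(t)=\xi_{q(t)}X(t)=\xi_{q(t)}K_V\xi_{q(t)}^{*}p(t)=K_{q(t)}p(t)=\partial_pH^\s(q(t),p(t))$. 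For the covector equation, a product-rule expansion of $\partial_q(K_qp)=\partial_q(\xi_qK_V\xi_q^{*}p)$ (differentiating the outer $\xi_q$ and the inner $\xi_q^{*}$), using $K_V=K_V^{*}$ and $X=K_V\xi_q^{*}p$, shows that the two resulting contributions coincide after pairing with $p$, whence $\partial_q(K_qp)^{*}p=2\,\partial_q(\xi_qX)^{*}p$ and therefore $\partial_q(\xi_qX)^{*}p=\tfrac12\partial_q(K_qp)^{*}p=\partial_qH^\s(q,p)$. Step 1 then gives $\dot p(t)=-\partial_q(\xi_{q(t)}X(t))^{*}p(t)=-\partial_qH^\s(q(t),p(t))$, so $(\dot q,\dot p)=\nabla^\omega H^\s(q,p)$, as claimed.

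\textbf{Main obstacle.} The conceptual content is light — this is the standard collective/Clebsch reduction of an invariant Hamiltonian. The real work is in the infinite-dimensional bookkeeping: checking that $t\mapsto\partial_q(\xi_{q(t)}X(t))^{*}$ is genuinely $L^2$ along the $H^1$ curve $q(\cdot)$ so that Step 1's ODE is well posed; justifying the differentiation in $q$ of the deformation identity $\frac{d}{dt}A_{\varphi(t)}(q)=\xi_{A_{\varphi(t)}(q)}X(t)$ with enough regularity, which relies on $\varphi(t)\in\D^{s+k}(M)$ and Condition 3 of Definition \ref{shapespace}; and making the identity $\partial_q(K_qp)^{*}p=2\,\partial_q(\xi_qX)^{*}p$ rigorous in the weak topology of $\Gamma^{s}(TM)^{*}$, where $\xi_q^{*}p$ and its $q$-derivative must be paired against elements of $V$ through $K_V$. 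None of these steps is deep, but each must be handled carefully.
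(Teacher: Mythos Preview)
Your proof is correct and follows essentially the same route as the paper's: define $p(\cdot)$ by the adjoint equation $\dot p=-\partial_q(\xi_qX)^*p$, show the shape-space momentum $\mu^\s(t)=\xi_{q(t)}^*p(t)$ satisfies $\dot\mu^\s=\mathrm{ad}^*_X\mu^\s$, identify it with $\mu(t)$, and then unpack the normal and singular Hamiltonian equations exactly as you do. The only cosmetic difference is in Step~2: where you invoke ``uniqueness'' of solutions to $\dot\mu=\mathrm{ad}^*_X\mu$, the paper instead uses the explicit pushforward formula $\mu(t)=\varphi(t)_*\mu(0)$ from Proposition~\ref{prop_momentum} (applied to both $\mu$ and $\mu^\s$), which simultaneously gives existence and uniqueness and avoids having to justify an abstract uniqueness statement in $\Gamma^{-s}(T^*M)$.
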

\begin{proof}
Consider $p(\cdot)$ the solution of the linear Cauchy problem $p(0)=p_0$ and
$$
\dot{p}(t)=-\partial_q(\xi_{q(t)}X(t))^*p(t).
$$
Let $\mu^\s(t)=\mu^\s(q(t),p(t))$ for every $t\in[0,1]$. Then, the same proof as in the previous proposition shows that
$$
\dot{\mu}^\s(t)=\mathrm{ad}^*_{X(t)}\mu^\s(t).
$$
Then $\mu^\s(\cdot)=\varphi(\cdot)_*\mu^\s(0)=\varphi(\cdot)_*\mu(0)$, and Proposition \ref{prop_momentum2} implies that $\varphi(\cdot)_*\mu(0)=\mu(\cdot)$. Hence, $\mu(t)=\mu^\s(t)$ for every $t$ in $[0,1]$.

In the normal case, we need to prove that $(q(t),p(t))$ satisfy the normal Hamiltonian equation on $\s$. In the singular case, we already have $\dot{p}(t)=-\partial_qH^\s_0(q(t),p(t),X(t))$, but we must also prove that $\xi_{q(t)}^*p(t)=0$ on $V$.

Let us start with the case where $\varphi$ is a normal geodesic. Then  $X(t)=K_V\mu(t)=K_V\xi_{q(t)}^*p(t)$ for almost every $t$, so $\dot{q}(t)=\xi_{q(t)}X(t)=K_{q(t)}p(t)$. Then 
$$
\begin{aligned}
-\frac{1}{2}\partial_q(p(t)K_{q(t)}p(t))&=-\frac{1}{2}(\partial_q(\xi_{q(t)}K_V\xi_{q(t)}^*p(t))^*p(t)\\
&=
-(\partial_q(\xi_{q(t)})K_V\xi_{q(t)}^*p(t))^*p(t)\\&=-\partial_q(\xi_{q(t)}X(t))^*p(t)
\\&=\dot{p}(t).
\end{aligned}
$$
Hence $(q(t),p(t))$ does satisfy the normal Hamiltonian equation on $\s$.

In the singular case, for any $Y\in V$, $\xi_{q(t)}^*p(t)(Y)=\mu^\s(t)(Y)=\mu(t)(Y)=0$ because $\varphi^X$ is a singular curve with momentum $\mu(\cdot)$, which finishes the proof.
\end{proof}

Combining those results, we see that solutions of the normal (resp. abnormal) Hamiltonian equations on $\s$ are exactly those curves that come from normal geodesics (resp. singular curves) on $\D^s(M)$ with initial momentum of the form $\xi_{q_0}^*p_0$. In particular, for $k\geq2$, the completeness of the the normal geodesic flow on $T^*\D^s(M)$ implies that $\nabla^\omega H^\s$ is a complete vector field on $T^*\s$.

\paragraph{Step 2: Solutions of the Hamiltonian equations are indeed normal geodesics and singular curves.} Recall that if $(q(\cdot),X(\cdot))$ is a normal geodesic (resp. a singular system), then $\d A(X)=\d\,\mathrm{end}_{q_0}^\s (X)^*p_1$ (resp. $0=\d\,\mathrm{end}_{q_0}^\s(X)^*p_1$) for some $p_1\in T_{q(1)}^*\s$. But the following partial converse is trivially true.
\begin{lemma}
Let $(q(\cdot),X(\cdot))$ be a horizontal system with $q(1)=q_1$. Then:
\begin{itemize}
\item If $\d A(X(\cdot))=\d\,\mathrm{end}^\s_{q_0}(X(\cdot))^*p_1$ for some $p_1\in T^*_{q_1}\s$, then $q(\cdot)$ is a {geodesic} (and, therefore, a normal geodesic).
\item If $0=\d\,\mathrm{end}^\s_{q_0}(X(\cdot))^*p_1$ for some nonzero $p_1\in T^*_{q_1}\s$, then $X(\cdot)$ is a singular point of $\mathrm{end}^\s_{q_0}$: $q(\cdot)$ is an \textbf{singular curve.} However, it may not be a geodesic.
\end{itemize}
\end{lemma}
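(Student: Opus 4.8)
The plan is simply to unwind the definitions of \emph{geodesic} and \emph{singular curve} given above; both statements reduce to a one-line chain-rule computation, which is why they are ``trivially true''. For the first bullet, recall that $q(\cdot)$ is a geodesic precisely when $X(\cdot)$ is a critical point of $A$ restricted to the level set $(\mathrm{end}^\s_{q_0})^{-1}(\{q_1\})$. So I would fix an arbitrary $\mathcal{C}^1$-family $a\mapsto X_a(\cdot)\in L^2(0,1;V)$ with $X_0=X$ and $\mathrm{end}^\s_{q_0}(X_a)=q_1$ for all $a$, set $\delta X=\partial_a X_a|_{a=0}$, and differentiate the constraint at $a=0$ to get $\d\,\mathrm{end}^\s_{q_0}(X)(\delta X)=0$. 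Then, using the hypothesis $\d A(X(\cdot))=\d\,\mathrm{end}^\s_{q_0}(X(\cdot))^*p_1$,
$$
\partial_a A(X_a(\cdot))\big|_{a=0}=\d A(X(\cdot))(\delta X)=p_1\!\left(\d\,\mathrm{end}^\s_{q_0}(X(\cdot))(\delta X)\right)=p_1(0)=0,
$$
so $q(\cdot)$ is a geodesic; and since the Lagrange relation \eqref{eq:lagcondshsp} then holds with $\lambda=-1\neq0$, it is a \emph{normal} geodesic by definition.

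For the second bullet, the identity $0=\d\,\mathrm{end}^\s_{q_0}(X(\cdot))^*p_1$ with $p_1\neq0$ says exactly that the nonzero covector $p_1$ annihilates the image of $\d\,\mathrm{end}^\s_{q_0}(X(\cdot))$; hence that image is contained in the proper closed hyperplane $\ker p_1\subset T_{q_1}\s$, so $\d\,\mathrm{end}^\s_{q_0}(X(\cdot))$ is not onto. By definition this makes $X(\cdot)$ a singular point of $\mathrm{end}^\s_{q_0}$ and $q(\cdot)$ a singular curve. Nothing in this argument constrains the first variation of $A$ along the level set, so $q(\cdot)$ need not be a critical point of the action with fixed endpoints, i.e.\ need not be a geodesic; I would simply remark this, pointing to the general abnormal-versus-singular discussion above (and to the explicit singular curves on $\D^s(M)$ from \cite{AT}, or to the examples of Section~\ref{geodshsp}) rather than constructing a new counterexample here.

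There is no real obstacle: the only points needing a word of care are that the pairing $p_1\!\left(\d\,\mathrm{end}^\s_{q_0}(X(\cdot))(\delta X)\right)$ is indeed how the adjoint $\d\,\mathrm{end}^\s_{q_0}(X(\cdot))^*p_1$ acts on $\delta X$, and that differentiation of $a\mapsto A(X_a(\cdot))=\tfrac12\int_0^1\langle X_a(t),X_a(t)\rangle\,\d t$ under the integral at $a=0$ is legitimate — both follow at once from the $\mathcal{C}^k$-regularity ($k\geq1$) of the endpoint map recorded earlier and the smoothness of the quadratic functional $X\mapsto A(X)$ on $L^2(0,1;V)$.
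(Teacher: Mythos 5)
Your proposal is correct and matches the paper's intent: the paper states this lemma without proof, calling it ``trivially true,'' and your unwinding of the definitions (differentiating the endpoint constraint to kill the pairing with $p_1$ in the normal case, and observing that a nonzero annihilating covector forces the differential of $\mathrm{end}^\s_{q_0}$ to miss a closed hyperplane in the singular case) is exactly the routine argument being omitted. No gaps.
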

\begin{remark} 
On the other hand, $(\d A(X(\cdot)),\d\,\mathrm{end}_{q_0}^\s(X(\cdot)))$ having proper dense image does not give any useful information on $X(\cdot)$, so the elusive geodesic cannot be found this way.
\end{remark}

Now, note that $\mathrm{end}_{q_0}^\s=R_{q_0}\circ \mathrm{end},$ where $\mathrm{end}:L^2(0,1;V)\rightarrow \D^s(M)$ is the endpoint map from $e$ on $\D^s(M)$: $\mathrm{end}(X(\cdot))=\varphi^X(1)$. Consequently, $\d\,\mathrm{end}_{q_0}^\s(X)^*=\d\,\mathrm{end}(X)^*\d R_{q_0}(\varphi^X(1))^*$. For $\varphi\in\D^s(M)$ and $\delta \varphi=Y\circ\varphi\in T_\varphi \D^s(M)$, we have
$$
\d R_{q_0}(\varphi).\delta \varphi=\xi_{\varphi\cdot q}Y,
$$
so that $\d R_{q_0}(\varphi)^*p=(\d R_{\varphi}^{-1})^*\xi_{\varphi\cdot q}^*p$.

We deduce from the previous discussion that $(q(\cdot),X(\cdot))$ is a normal geodesic (resp. singular curve) between $q_0$ and $q_1$ if and only if $\varphi^X$ is a normal geodesic (resp. singular curve) whose covector $P(\cdot)$ satisfies
$$
P(1)=(\d R_{\varphi}^{-1})^*\xi_{\varphi\cdot q}^*p_1
$$
for some $p_1\in T^*_{q_1}\s$. But $P(1)=(\d R_\varphi^{-1})^*\mu(1)$ with $\mu(\cdot)$ the momentum of $\varphi$, so this condition becomes
$$
\mu(1)=\xi_{\varphi\cdot q}^*p_1=\mu^\s(q_1,p_1).
$$
Proposition \ref{prop:mom2} then proves that there exists $p:t\mapsto p(t)\in T_{q(t)}\s$ such that $(q(t),p(t))$ satisfy the normal (resp. abnormal) Hamiltonian equations, which completes the proof.

\subsubsection{On inexact matching}
It should be emphasized, again, that other geodesics (the so-called \emph{elusive geodesics}) may also exist. Moreover, finding every singular curves and axtracting the abnormal geodesics from them is a daunting task. However, when performing LDDMM methods and algorithms for inexact matching, one aims to minimize over $L^2(0,1;V)$ functionals of the form
$$
J(X(\cdot))=A(X(\cdot))+g(q^X(1))=A(X(\cdot))+
g\circ\mathrm{end}_{q_0}^\s(X(\cdot)).
$$
In this case, $X(\cdot)$ is a critical point if and only if 
$$
dA(X)=-\d\,\mathrm{end}_{q_0}^\s(X)^*dg(q^X(1)).
$$
The trajectory induced by such a critical point $X$ is therefore automatically a normal geodesic, whose covector satisfies $p(1)=-dg(q^X(1))$. This means that one need consider neither abnormal nor elusive geodesics when looking for minimizers of $J$. 

Consequently, the search for minimizing trajectories can be reduced to the minimization of
$$
\frac{1}{2}\int_0^1p(t)(K_{q(t)}p(t))dt+g(q(1))
$$
among all solutions of the control system
$$
\dot{q}(t)=K_{q(t)}p(t),
$$
where $p(\cdot)$ is any covector along $q(\cdot)$ and is $L^2$ in time. 

\begin{remark}
In other words, we reduced ourselves to studying another sub-Riemannian structure $\mathrm{SR}_{V,T^*\s}^{\s}=(T^*\s,g^K,K)$, where $q\mapsto K_q$ is the vector bundle morphism $T^*\s\rightarrow T\s$ and $g_q^K(p_1,p_2)=p_1(K_qp_2).$

This is a \emph{weak} sub-Riemannian structure, whose normal geodesics coincide with those of $\mathrm{SR}_{V}^{\s}$.
\end{remark}

This reduction is very useful in practical applications and numerical simulations, since, when $\s$ is finite dimensional, we obtain a finite dimensional control system, for which many optimization methods are available. See \cite{ATY} for algorithms to minimize such a functionnal in the abstract framework of shape spaces in $\R^d$. 

\subsection{Some examples of geodesic equations on classical shape spaces}\label{ex2}
In this section, we assume that $M=\R^d$.
Let $s_0$ be the smallest integer such that $s_0>d/2$. For every integer $s\geq s_0+1$, the group $\mathcal{D}^s(\R^d)$ coincides with the set of diffeomorphisms $\varphi$ of $\R^d$ such that $\varphi-\mathrm{Id}_{\R^d}\in H^s(\R^d,\R^d)$, and is an open subset of the affine Hilbert space $\mathrm{Id}_{\R^d}+H^s(\R^d,\R^d)$ \cite{BV}. For a vector field $X$, we will also write $X=X^1e_1+\dots+X^de_d=(X^1,\dots,X^d)$, with $(e_i)$ the canonical frame of $\R^d$. 


The Euclidean inner product of two vectors $v$ and $w$ of $\R^d$ is denoted by $v\cdot w$. The notation $v^T$ stands for for the linear form $w\mapsto v\cdot w$. Conversely, for a linear form $p\in(\R^d)^*$, we denote by $p^T$ the unique vector $v$ in $\R^d$ such that $p=v^T$. 

\subsubsection{Spaces of continuous embeddings of compact manifolds and Gaussian kernels.}

We consider the Hilbert space $V\subset H^s(\R^d,\R^d)$ whose reproducing kernel is the Gaussian kernel 
$$
K(x,y)p=e^{-\frac{\vert x-y\vert^2}{2\sigma}}p^T=e(x-y)p^T,
$$
where we denoted $e^{-\frac{\vert x\vert^2}{2\sigma}}=e(x)$ for readability.
We let $N$ be a compact Riemannian manifold, and $\mathcal{S}=\mathcal{C}^0(N,\R^d)$, a shape space of order 1 (and even 0) in $\R^d$. We have $T^*\mathcal{S}=\mathcal{S}\times\mathcal{C}^0(N,\R^d)^*$, with the elements of $\mathcal{C}^0(N,\R^d)^*$ being identified to maps $S\rightarrow(\R^d)^*$ with coefficients in the space $\mathcal{M}(N)$ of Radon measures on $N$, that is, elements of $\mathcal{M}(N)\otimes(\R^d)^*$.

Then, for $q:N\rightarrow \R^d$ continuous and $p=(p_1,\dots,p_d)\in \mathcal{M}(N)\otimes(\R^d)^*$,
$$
(K^V\xi_q^*p)(x)=\int_{N}e(x-q(a))dp^T(a).
$$
Hence, we have $K_qp\in T_q\mathcal{S}=\mathcal{C}^0(N,\R^d)$ given by
$$
(K_qp)(a)=\int_{N}e(q(a)-q(a'))dp^T(a').
$$
The normal Hamiltonian is
$$
H^{\mathcal{S}}(q,p)=\frac{1}{2}\int_{N^2}e(q(a)-q(a'))\sum_{i=1}^d \d p_i(a)\d p_i(a')=\frac{1}{2}\int_{N^2}e(q(a)-q(a'))[\d p(a)\cdot \d p(a')].
$$
Using $e(x-y)=e(y-x)$ and $de(x).v=-\frac{1}{\sigma}(x\cdot v)e(x)$, we get for any $\delta q\in T_q\mathcal{S}$,
$$
\partial_qH^{\mathcal{S}}(q,p).\delta q=-\frac{1}{\sigma}\int_{N^2}[(q(a)-q(a'))\cdot\delta q(a)]e(q(a)-q(a'))[\d p(a)\cdot \d p(a')].
$$
In the end, the geodesic equations read
$$
\begin{aligned}
\dot{q}(t,a)&=\ \ \ \int_{N}e(q(t,a)-q(t,a'))\d p^T(t,a'),\\
\dot{p}(t,a)&=\frac{1}{\sigma}\int_{N}e(q(t,a)-q(t,a'))(q(a)-q(a'))^T[\d p(t,a')\cdot \d p(t,a)].
\end{aligned}
$$

In the special case of landmarks, we can write $\mathcal{S}=(\R^d)^n$, writing $q=(x_1,\dots,x_n)$ and $p=(p_1,\dots,p_n)\in (\R^{d*})^n$, we get (omitting time for readability)
$$
\begin{aligned}
\dot{x}_i&=\sum_{j=1}^n e^{-\frac{\vert x_i-x_j\vert^2}{2\sigma}}p_j^T,\\
\dot{p}_i&=\frac{1}{\sigma}\sum_{j=1}^n e^{-\frac{\vert x_i-x_j\vert^2}{2\sigma}}(p_i\cdot p_j)(x_i-x_j)^T,
\end{aligned}
\qquad \qquad i=1,\dots,n.
$$

\begin{remark}
It is worthwile to note that in the neighbourhood of $q=(x_1,\dots,x_n)$ such that $i\neq j$ implies $x_i\neq x_j$, the structure here is actually Riemannian instead of simply sub-Riemannian.
\end{remark}

\subsubsection{An action of higher order}
In this paragraph, we keep our Gaussian kernel $K$ as above and $N$ our compact Riemannian manifold, but we take the shape space $\mathcal{S}=T\mathcal{C}^0(N,\R^d)=\mathcal{C}^0(N,\R^d)\times\mathcal{C}^0(N,\R^d)$, a shape space of order 1 for the action
$$
\varphi\cdot(q,v)(a)=(\varphi(q(a)),d\varphi(q(a)).v(a)),\quad a\in N.
$$
The three conditions for a shape space of order 1 are easily seen to be true. The infinitesimal action of a vector field $X$ on $\R^d$ is
$$
\xi_{q,v}(X)(a)=(X(q(a)),\d X(q(a)).v(a)),\quad a\in N.
$$

Now a momentum on $\mathcal{S}$ is a couple $(p,l)$ of maps $N\rightarrow(\R^d)^*$ with coefficients in $\mathcal{M}(N)$, and
$$
\xi_{q,v}^*(p,l)(X)=\int_{N}\d p(a)(X(q(a))+\int_{N}\d l(a)(\d X(q(a)).v(a)).
$$
Therefore,
$$
K^V\xi_{q,v}^*(p,l)(x)
=
\int_{N}e(x-q(a))dp^T(a)
-
\frac{1}{\sigma}\int_{N}e(x-q(a))[v(a)\cdot(x-q(a))]\d l^T(a).
$$
Writing $\xi_{q,v}K^V\xi_{q,v}^*(p,l)=(w_1,w_2)$, and $e_{a,a'}=e(q(a)-q(a'))$, we get
$$
\begin{aligned}
w_1(a)=&
\ \ \ \ \ \ \int_{N}e_{a,a'}\d p^T(a')
-
\frac{1}{\sigma}\int_{N}e_{a,a'}[v(a')\cdot(q(a)-q(a'))]\d l^T(a'),
\\
w_2(a)
=
&
-\frac{1}{\sigma}\ \int_{N}e_{a,a'}[v(a)\cdot(q(a)-q(a'))]\d p^T(a')
\\
&
+
\frac{1}{\sigma^2}
\int_{N}e_{a,a'}[v(a')\cdot(q(a)-q(a'))][v(a)\cdot(q(a)-q(a'))]\d l^T(a')\\
&
-
\frac{1}{\sigma}\ \int_{N}e_{a,a'}[v(a')\cdot v(a)]\d l^T(a').
\end{aligned}
$$
The normal Hamiltonian is given by
$$
\begin{aligned}
H^{\mathcal{S}}(q,v,p,l)&=\frac{1}{2}\iint_{N^2}e_{a,a'}[\d p(a)\cdot \d p(a')]
-
\frac{1}{2\sigma}\int_{N^2}e_{a,a'}[v(a')\cdot v(a)][\d l(a)\cdot \d l(a')]
\\
&
-
\frac{1}{2\sigma}\iint_{N^2}e_{a,a'}[(v(a))\cdot(q(a)-q(a'))][\d l(a)\cdot \d p(a')+\d p(a)\cdot \d l(a')]
\\
&
+
\frac{1}{2\sigma^2}
\iint_{N^2}e_{a,a'}[v(a')\cdot(q(a)-q(a'))][v(a)\cdot(q(a)-q(a'))][\d l(a)\cdot \d l(a')].
\end{aligned}
$$
From there, the geodesic equations are easily deduced. For example
$$
\begin{aligned}
\dot{l}(a)^T=-\partial_vH^{\mathcal{S}}(q,v,p,l)^T(a)=&
\frac{1}{\sigma}\int_{N}e_{a,a'}v(a')[l(a)\cdot \d l(a')]
\\
&
+\frac{1}{2\sigma}\int_{N}e_{a,a'}(q(a)-q(a'))[l(a)\cdot \d p(a')+p(a)\cdot \d l(a')]
\\
&
-
\frac{1}{\sigma^2}
\int_{N}e_{a,a'}(q(a)-q(a'))[v(a')\cdot(q(a)-q(a'))][l(a)\cdot \d l(a')].
\end{aligned}
$$

In the case of landmarks, we take $q=(x_1,\dots,x_n)$, $v=(v_1,\dots,v_n)$, $p=(p_1,\dots,p_n)$, and $l=(l_1,\dots,l_n)$. Denoting $x_{i,j}=x_i-x_j$,we get
$$
\left.
\begin{aligned}
\dot{x}_i^T=&\sum_{j=1}^n\left(p_j-\frac{1}{\sigma}[v_j\cdot x_{i,j}] l_j\right)e(x_{i,j}),\\
\dot{v}_i^T=&\sum_{j=1}^n\left(-\frac{1}{\sigma}[v_i\cdot x_{i,j}]p_j -\frac{1}{\sigma^2}[v_i\cdot(x_{i,j})][v_j\cdot x_{i,j}]l_j+\frac{1}{\sigma}[v_i\cdot v_j]l_j\right)e(x_{i,j}),
\\
\dot{l}_i^T=&\frac{1}{\sigma}\sum_{j=1}^n [l_i\cdot l_j]e(x_{i,j})v_j+\frac{1}{2\sigma}\sum_{j=1}^n [l_i\cdot p_j+l_ip_j]e(x_{i,j})x_{i,j}\\&-
\frac{1}{\sigma^2}\sum_{j=1}^n [v_j\cdot x_{i,j}][l_i\cdot l_j]
e(x_{i,j})x_{i,j}.
\end{aligned}\right\rbrace\qquad i=1,\dots,n
$$
The derivative of $p_i$ is slightly more complex, as
$$
\left.
\begin{aligned}
\dot{p}_i^T&=\frac{1}{\sigma}\sum_{j=1}^n\left(p_i\cdot p_j-\frac{1}{\sigma}[v_i\cdot x_{i,j}][v_j\cdot x_{i,j}][l_j\cdot l_j]-\frac{1}{\sigma}[v_i\cdot v_j][l_i\cdot l_j]\right)e(x_{i,j}) x_{i,j}
\\
&
-\frac{1}{\sigma^2}\sum_{j=1}^n\left([v_i\cdot x_{i,j}][l_i\cdot p_j]-[v_j\cdot x_{i,j}][p_i\cdot l_j]\right)e(x_{i,j}) x_{i,j}
\\
&
- \frac{1}{\sigma^3}\sum_{j=1}^n\left([v_i\cdot x_{i,j}][v_j\cdot x_{i,j}][l_i\cdot l_j]\right)e(x_{i,j})x_{i,j}
\\
&
+\frac{1}{\sigma}\sum_{j=1}^n\left([l_i\cdot p_j]v_i-[p_i\cdot l_j]v_j\right)e(x_{i,j}).
\end{aligned}\right\rbrace\qquad i=1,\dots,n.
$$

\begin{remark}
Again, in the case of landmarks, there is an open and dense subset of $\hat{\mathcal{S}}$ on which the sub-Riemannian structure is actually Riemannian.
\end{remark}

\subsubsection{The case of sub-Riemannian Gaussian kernels}

Here, we compute the sub-Riemannian Hamiltonian geodesic equations for the shape space $\mathcal{S}=\mathcal{C}^0(N,\R^d)$, with $N$ a compact Riemannian manifold, and with $V$ defined by the reproducing kernel
$$
K(x,y)p=e^{-\frac{\vert x-y\vert^2}{2\sigma}}\sum_{r=1}^k p(X_r(y)) X_r(x)=e(x-y)\sum_{r=1}^k p(X_r(y)) X_r(x),
$$
for $X_1,\dots,X_k$ smooth bounded vector fields on $\R^d$ with bounded derivatives at every order, linearly independent at every point $x$ of $\R^d$. Note that any $X\in V$ is everywhere tangent to the distribution of subspaces of dimension $k$ generated by the $X_r$s.

Now for $q\in\mathcal{S}$ and $p=(p_1,\dots,p_d)\in T^*_q\mathcal{S}=\mathcal{M}(N)\otimes(\R^d)^*$, we get
$$
(K^V\xi_q^*p)(x)=\sum_{r=1}^k\int_{N}e(x-q(a))\d p(a)(X_r(q(a)))X_r(x),
$$
where $\d p(a)(X_r(q(a)))\in \mathcal{M}(N)$ is defined by $X_r^1( q(a)) dp_1(a)+\cdots+X_r^d(q(a)) dp_d(a)$.

Then we get
$$
K_qp(a)=K^V\xi_q^*p(q(a))=\sum_{r=1}^k\int_{N}e(q(a)-q(a'))\d p(a')(X_r(q(a')))X_r(q(a)),
$$
and compute the reduced Hamiltonian 
$$
H^{\mathcal{S}}(q,p)=\frac{1}{2}\sum_{r=1}^k \iint_{N^2}e(q(a)-q(a'))\d p(a)(X_r(q(a))) \d p(a')(X_r(q(a'))).
$$
Then, reintroducing the notations $e_{a,a'}=e(q(a)-q(a'))$, we obtain
$$
\begin{aligned}
\partial_qH^\mathcal{S}(q,p).\delta q=-&
\frac{1}{\sigma}\sum_{r=1}^k \iint_{N^2}e_{a,a'}[\delta q(a)\cdot (q(a)-q(a'))]\d p(a)(X_r(q(a))) \d p(a')(X_r(q(a')))
\\
&
+\sum_{r=1}^k \iint_{N^2}e_{a,a'}\d p(a)(\d X_{r}(q(a)).\delta q(a)\, \d p(a')(X_r(q(a'))).
\end{aligned}
$$
In other words, as a Radon measure,
$$
\begin{aligned}
\partial_qH^\mathcal{S}(q,p)(a)=&\sum_{r=1}^k
\int_{N}e_{a,a'}\d p(a')(X_r(q(a')))\left(p(a)(\d X_{r}(q(a)) -\frac{1}{\sigma}p(a)(X_r(q(a)))(q(a)-q(a'))^T\right).
\end{aligned}
$$
Here, $p(\d X_{r}(q))$ in $\mathcal{M}(N)\otimes(\R^d)^*$ is the $(\R^d)^*$-valued Radon measure given by
$$
\d p(a)(\d X_{r}(q(a))=dp_1(a)\otimes \d X^1_{r}(q(a))+\cdots+\d p_d(a)\otimes \d X^d_{r}(q(a)).
$$
Therefore, the Hamiltonian geodesic equations are given by
$$
\begin{aligned}
\dot{q}(a)=&\sum_{r=1}^k\int_{N}e_{a,a'}\d p(a')(X_r(q(a')))X_r(q(a)),
\\
\dot{p}(a)=&\sum_{r=1}^k
\int_{N}e_{a,a'}\d p(a')(X_r(q(a')))\left(\frac{1}{\sigma}p(a)(X_r(q(a)))(q(a)-q(a'))^T-p(a)(\d X_{r}(q(a))) \right).
\end{aligned}
$$
In the case of landmarks, $q=(x_1,\dots,x_n)\in (\R^d)^n$ and $p=(p_1,\dots,p_n)\in (\R^d)^{n*}$, and denoting $x_i-x_j=x_{ij}$, we obtain
\begin{equation}
\label{geodsrlmk}
\begin{aligned}
\dot{x}_i=&\sum_{r=1}^k\sum_{j=1}^n e(x_{ij}) p_j(X_r(x_j))X_r(x_i),
\\
\dot{p}_i=&\sum_{r=1}^k
\sum_{j=1}^n e(x_{ij})p_j(X_r(x_j))\left(\frac{1}{\sigma}p_i(X_r(x_i)) x_{ij}^T
-p_i(\d X_{r}(x_i))\right).
\end{aligned}
\end{equation}

\begin{remark}
An interesting fact is that $e(x_{ij})\rightarrow \delta_{ij}$ as $\sigma\rightarrow 0$, where $\delta_{ij}=1$ if $i=j$ and $0$ otherwise. Hence $\frac{1}{\sigma}e(x_{ij})x_{ij}\rightarrow 0$ as $\sigma\rightarrow 0$ because $x_{ii}=0$. Therefore, as $\sigma$ goes to $0$, the geodesic equations become
$$
\begin{aligned}
\dot{x}_i=&\sum_{r=1}^kp_i(X_r(x_i))X_r(x_i),
\\
\dot{p}_i=&\sum_{r=1}^k
-p_i(X_r(x_i))p_i(\d X_{r}(x_i)).
\end{aligned}
$$
For each couple $t\mapsto(x_i(t),p_i(t))\in T^*\R^d$, we recognize the hamiltonian geodesic equation for the sub-Riemannian structure $\Delta$ induced on $\R^d$ by $X_1,\dots,X_k$ \cite{MBOOK}: the dynamic is that of a system of $n$ points without interaction in $(\R^d,\Delta)$.

So Equations \eqref{geodsrlmk} can be seen as a perturbation of such a dynamical by creating an interaction between the points $x_i$, which decreases exponentially with respect to the square of the Eucliddean distances between the points.
\end{remark}

\section{Equivariant mappings and lifted shape spaces}\label{sec_lifted}

The general purpose of this section is to define and study the applications of equivariant mappings between shape spaces. In particular, we will use them to study symmetries in both Hamiltonian flows, and to consider ILH shape spaces of infinite order. Another use for equivariant maps is the introduction of lifted shape spaces, which allow the consideration of additional information along deformations, and are always strictly sub-Riemannian even for finite dimensional shape spaces.


\subsection{Equivariant maps}\label{sec:equivmap}

We keep the notations from the previous sections.

\begin{definition}
A mapping $\pi:\hat \s:\rightarrow\s$ between two shape spaces of respective order $\hat \ell\geq\ell$ is said to be \textbf{equivariant} if for every diffeomorphism $\varphi$ and state $\hat{q}\in \hat \s$, $\phi\cdot \pi(\hat q)=\pi(\varphi\cdot\hat q)$. 
\end{definition}
The composition rule for derivatives shows that for every $X\in \Gamma^{s_0+\hat{\ell}}(TM)$, we have 
$$
d\pi(\hat q).\xi_{\hat{q}}X=\xi_{\pi(\hat{q})}X. 
$$
In particular, a horizontal curve on $\hat\s$ is projected to a horizontal curve on $\s$ with the control $X(\cdot)$. Let us give a few examples.

\begin{example} \textbf{Additional landmarks:} Take $n\leq\hat{n}\in\N^*$, and $\mathcal{S}=Lmk_n(M)$ and $\hat{\mathcal{S}}=Lmk_{\hat{n}}(M)$. We define $\pi:Lmk_{\hat{n}}(M)\rightarrow Lmk_n(M)$ by
$$
\pi(x_1,\dots,x_{\hat{n}})=(x_1,\dots,x_n).
$$
The mapping $\pi$ is an equivariant submersion for the diagonal action of $\mathcal{D}^{s_0+1}(M)$: for every $\varphi$ in $\mathcal{D}^{s_0+1}(M)$,
$$
\pi(\varphi\cdot (x_1,\dots,x_{\hat{n}}))=\pi(\varphi(x_1),\dots,\varphi(x_{\hat{n}}))=(\varphi(x_1),\dots,\varphi(x_n))=\varphi\cdot\pi(x_1,\dots,x_{{n}}).
$$
\end{example}
This is the mapping used in \cite{Y1} for the diffeon method.

\begin{example} \textbf{Embeddings of submanifolds:} Let $\hat{N}$ be a smooth compact Riemannian manifold, $N\subset\hat N$ a submanifold of $\hat{N}$ without boundary, and let $\mathcal{S}=\mathcal{C}^0(S,M)$ and $\hat{\mathcal{S}}=\mathcal{C}^0(\hat{S},M)$. Then the map $\pi:\hat{q}\mapsto \hat{q}_{\vert N}$ is an equivariant submersion.
\end{example}
This was used in \cite{ATY} to prove that the LDDMM methods on discrete shapes converge to the continuous case as the discretization gets finer and finer.

\begin{example} \textbf{Action on the tangent space:} Let $\mathcal{S}$ be a shape space in $M$ of order $\ell$. Then $\hat{\mathcal{S}}=T\mathcal{S}$ is a shape space of order $\ell+1$ for the action of $\mathcal{D}^{s_0+\ell+1}(M)$ given by the differential of the action on $\mathcal{S}$
$$
\varphi\cdot(q,v)=(\varphi\cdot q,\partial_q(\varphi\cdot q)(v))\in T_{\varphi\cdot q}\mathcal{S}.
$$
Then the projection $\pi(q,v)=q$ is an equivariant submersion for these actions.

In the case of landmarks, i.e. $\mathcal{S}=Lmk_n(M)$, we can write $(q,v)=(x_1,\dots,x_n,v_1,\dots,v_n)$, with $v_i\in T_{x_i}M$, and
$$
\varphi\cdot(q,v)=(\varphi(x_1),\dots,\varphi(x_n),d\varphi_{x_1}(v_1),\dots,d\varphi_{x_n}(v_n)).
$$
\end{example}

\begin{example} \textbf{Inclusion in a shape space of lower order:} Consider $\s=\mathrm{Emb}^{\alpha_0+\ell}(N,M)$ and $\hat\s=\mathrm{Emb}^{\alpha_0+\ell+1}(N,M)$, with $N$ a smooth compact manifold. Then the inclusion $\hat\s\hookrightarrow\s$ is trivially equivariant.
\end{example}
Depending on the properties of $\pi$, many things can be deduced from the existence of such a mapping.

\subsection{Equivariant mappings and the geodesic flow}\label{sec:equivgeod}

We keep the notations and setting of the previous section, and take $\pi$ of class $\mathcal{C}^\infty$.
Fix $V$ a Hilbert space of vector fields with continuous  inclusion in $\Gamma^{s_0+\hat\ell+k}(TM)$ for $k\geq1$. Then a horizontal geodesic on $\s$ can be pulled back to a geodesic on $\hat\s$ through $\pi$.

\begin{proposition}
Let $(q(\cdot),p(\cdot))$ be a normal geodesic (resp. a singular curve) and the corresponding covector on $T^*\s$, such that $q(0)=\pi(\hat{q}_0)$. Let $X(\cdot)$ be the corresponding vector field in $L^2(0,1;V)$. Then for $\hat{q}(\cdot)=\varphi^X(\cdot)\cdot\hat{q}_0$ and $\hat p(\cdot))=d\pi(\hat q(\cdot)^*p(\cdot)$, $(\hat q(\cdot),\hat p(\cdot))$ is a normal geodesic (resp. a singular curve).

Conversely, let $(\hat q(\cdot),\hat p(\cdot))$ be a normal geodesic (resp. a singular curve)  on $\hat{\s}$, and denote $q(\cdot)=\pi(\hat{q}(\cdot))$. If $\hat{p}(0)=d\pi(\hat{q}(0))^*p_0$ for some $p_0\in T^*_{\hat q_0}\s$, then $q(\cdot)$ is a geodesic (resp. an abnormal curve) with initial covector $p_0$.
\end{proposition}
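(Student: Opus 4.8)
The plan is to reduce both implications to the corresponding statements on $\D^{s}(M)$ (Propositions~\ref{prop:mom1} and~\ref{prop:mom2}) through the momentum map, the only genuinely new input being the algebraic consequence of equivariance. Differentiating $\varphi\cdot\pi(\hat q)=\pi(\varphi\cdot\hat q)$ at $\varphi=e$ gives $d\pi(\hat q)\circ\xi_{\hat q}=\xi_{\pi(\hat q)}$ (as already noted in the text); transposing this yields $\xi_{\hat q}^{*}\circ d\pi(\hat q)^{*}=\xi_{\pi(\hat q)}^{*}$, i.e.\ the momentum maps are intertwined, $\mu^{\hat\s}\big(\hat q,\,d\pi(\hat q)^{*}p\big)=\mu^{\s}\big(\pi(\hat q),p\big)$ for every $p\in T^{*}_{\pi(\hat q)}\s$. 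In both directions let $X(\cdot)\in L^{2}(0,1;V)$ be the common control; then $\hat q(\cdot)=\varphi^{X}(\cdot)\cdot\hat q_{0}$ is the horizontal deformation of $\hat q_{0}$ with control $X$, equivariance gives $\pi(\hat q(t))=\varphi^{X}(t)\cdot\pi(\hat q_{0})=q(t)$, and $\varphi^{X}$ is simultaneously a lift of $\hat q(\cdot)$ on $\D^{s}(M)$ and the minimal lift of $q(\cdot)$.

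For the first implication, Proposition~\ref{prop:mom1} applied to the normal geodesic (resp.\ singular curve) $(q(\cdot),p(\cdot))$ on $\s$ shows $\varphi^{X}$ is a normal geodesic (resp.\ a singular curve) on $\D^{s}(M)$ with momentum $\mu(t)=\mu^{\s}(q(t),p(t))$ and $\mu(0)=\xi_{q_{0}}^{*}p(0)$. Setting $\hat p_{0}:=d\pi(\hat q_{0})^{*}p(0)$, the intertwining relation gives $\mu(0)=\xi_{\hat q_{0}}^{*}\hat p_{0}$, so Proposition~\ref{prop:mom2} applied on $\hat\s$ produces a covector $\tilde p(\cdot)$ with $\tilde p(0)=\hat p_{0}$, solving $\dot{\tilde p}(t)=-\partial_{q}(\xi_{\hat q(t)}X(t))^{*}\tilde p(t)$, for which $(\hat q(\cdot),\tilde p(\cdot))$ satisfies the normal (resp.\ abnormal) Hamiltonian equations on $\hat\s$. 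It then remains to identify $\tilde p(\cdot)$ with $\hat p(\cdot):=d\pi(\hat q(\cdot))^{*}p(\cdot)$: both have initial value $\hat p_{0}$, so by uniqueness for this linear Cauchy problem it suffices to check that $d\pi(\hat q(\cdot))^{*}p(\cdot)$ solves the same ODE, which I obtain by differentiating the equivariance identity $d\pi(\hat q)\,\xi_{\hat q}X=\xi_{\pi(\hat q)}X$ a second time in $\hat q$ and feeding in the $\s$-covector equation $\dot p(t)=-\partial_{q}(\xi_{q(t)}X(t))^{*}p(t)$ — the direct analogue of the computation in the proof of Proposition~\ref{prop_momentum2}. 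In the singular case one checks in addition that $\xi_{\hat q(t)}^{*}\hat p(t)=\xi_{q(t)}^{*}p(t)=0$, i.e.\ $V\subset\ker\hat p(t)$, and that $\hat p(0)\neq 0$, which holds whenever $d\pi(\hat q_{0})^{*}$ is injective, in particular when $\pi$ is a submersion.

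For the converse, Proposition~\ref{prop:mom1} applied on $\hat\s$ shows that $\varphi^{X}$, with $X(\cdot)$ the control of $\hat q(\cdot)$, is a normal geodesic (resp.\ a singular curve) on $\D^{s}(M)$ with momentum $\mu(t)=\mu^{\hat\s}(\hat q(t),\hat p(t))=\xi_{\hat q(t)}^{*}\hat p(t)$; using the intertwining relation and the hypothesis $\hat p(0)=d\pi(\hat q_{0})^{*}p_{0}$ gives $\mu(0)=\xi_{\hat q_{0}}^{*}\hat p(0)=\xi_{q_{0}}^{*}p_{0}=\mu^{\s}(q_{0},p_{0})$. Since moreover $q(t)=\pi(\hat q(t))=\varphi^{X}(t)\cdot q_{0}$, Proposition~\ref{prop:mom2} applied on $\s$ with the admissible initial momentum $\xi_{q_{0}}^{*}p_{0}$ produces a covector $p(\cdot)$ with $p(0)=p_{0}$ such that $(q(\cdot),p(\cdot))$ satisfies the normal (resp.\ abnormal) Hamiltonian equations — that is, $q(\cdot)$ is a geodesic (resp.\ an abnormal curve) with initial covector $p_{0}$, as claimed.

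The only real obstacle is the covector identification in the first implication, namely verifying that the explicit curve $d\pi(\hat q(\cdot))^{*}p(\cdot)$ is the abstract covector delivered by Proposition~\ref{prop:mom2}. Once the second differential of $d\pi(\hat q)\,\xi_{\hat q}X=\xi_{\pi(\hat q)}X$ is written out — note it is a derivative of an identity between maps, hence valid on all of $T_{\hat q}\hat\s$ and not merely on $\xi_{\hat q}(V)$, so no density hypothesis on the action is needed — this becomes a routine, if somewhat bulky, computation, and everything else is bookkeeping with the momentum map.
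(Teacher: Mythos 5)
Your proof is correct and follows essentially the same route as the paper: the intertwining relation $\xi_{\hat q}^{*}\,d\pi(\hat q)^{*}=\xi_{\pi(\hat q)}^{*}$ for the momentum maps, combined with Propositions \ref{prop:mom1} and \ref{prop:mom2} to pass through $\D^s(M)$, with the converse handled verbatim as in the paper. You are in fact slightly more careful than the paper on the forward direction: the paper only checks $\mu^{\hat\s}(\hat q(t),\hat p(t))=\mu^{\s}(q(t),p(t))$ and leaves implicit that the explicit covector $d\pi(\hat q(\cdot))^{*}p(\cdot)$ coincides with the one delivered by Proposition \ref{prop:mom2}, whereas you close this by showing it solves the same linear Cauchy problem (a computation that does go through, using the symmetry of the second derivative of $\pi$ and $\dot{\hat q}=\xi_{\hat q}X$), and you also rightly flag that $\hat p(0)\neq 0$ in the singular case requires injectivity of $d\pi(\hat q_0)^{*}$, a hypothesis the paper glosses over.
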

\begin{proof}
Let $(q(\cdot),p(\cdot))$ be a normal geodesic (resp. a singular curve) and the corresponding covector on $T^*\s$, such that $q(0)\in \pi(\hat{q}_0)$. Define $(\hat{q}(\cdot),\hat{p}(\cdot))$ as in the hypothesis. We just need to prove that $\mu^{\hat \s}(\hat q(t),\hat p(t))=\mu^{ \s}(q(t),p(t))$ thanks to Propositions \ref{prop:mom1} and \ref{prop:mom2}. But since $d\pi({\hat{q}}).\xi_{\hat{q}}=\xi_{\pi(\hat q)}$ we have $\xi_{\pi(\hat q)}^*=d\pi(\hat q)^*\xi_{\hat q}^*$, so for all $t\in[0,1]$, we do get
$$
\mu^\s(q(t),p(t))=\xi_{q(t)}^*p(t)=\xi_{\hat q(t)}^*\hat p(t)=\mu^{\hat\s}(\hat{q}(t),\hat{p}(t)).
$$

Conversely,  let $(\hat q(\cdot),\hat p(\cdot))$ be a normal geodesic (resp. a singular curve)  on $\hat{\s}$ such that $\hat{p}(0)=d\pi(\hat{q}(0))^*p_0$ for some $p_0\in T^*_{\hat q_0}\s$, and denote $q(\cdot)=\pi(\hat{q}(\cdot))$. Then the flow of the control $X(\cdot)$ corresponding to $\hat q(\cdot)$ is a geodesic (resp. singular curve) from $e$ on $\D^s(M)$ with initial momentum $\xi_{\hat q(0)}^*\hat{p}(0)=\xi_{q(0)}^*p_0,$ and therefore induces a normal geodesic (resp. singular curve) on $\s$ with initial covector $p_0$. But since for every $t$ in $[0,1]$
$$
\varphi^X(t)\cdot q(0)=\pi(\varphi^X(t)\cdot \hat{q}(0))=
\pi(\hat{q}(t))=q(t),
$$
we proved the converse.
\end{proof}
This result can be used to find symmetries in the geodesic flow of $\hat\s$, for example when $\s$ is the quotient of $\hat\s$ with a group action that commutes with that of $\D^{s_0+\hat\ell}(M)$. Indeed, it shows that the covector $\hat{p}(\cdot)$ stays of the form $d\pi(q(t))^*p(t)$ for all $t$ as long is had this form at $t=0$.

When $\pi$ is an immersion, can also use it to find elusive geodesics on $\s$, and also consider ILH shape spaces of infinite order.

\subsubsection{ILH shape spaces of infinite order} 

An ILH shape space of infinite order is an Inverse Limit Hilbert space $\s^\infty$ (see \cite{KMBOOK,O}), inverse limit of a decreasing sequence $\s^{i+1}\subset\s^i$, with each $\s^i$ a shape space of increasing order $\ell_i>\ell_{i-1}$, such that the inclusions $\s^{i+1}\hookrightarrow\s^{i}$ are continuous, equivariant and have dense image. As a consequence, $\D^\infty(M)$ acts smoothly on $\s^\infty$. 
\begin{example}
A typical example is $\s^\infty=\mathrm{Emb}^\infty(S,M)$, the manifold of all smooth embeddings of a compact manifold $S$ in $M$. It is the inverse limit of the sequence of Hilbert manifolds $\mathrm{Emb}^{\alpha_0+i}(S,M),$ $i\in\N$ with $\alpha_0>\dim(S)/2$.
\end{example}
When $V$ is a Hilbert space of vector fields with continuous inclusion in every $\Gamma^{s}(TM)$, it defines a sub-Riemannian structure of class $\mathcal{C}^\infty$ on each $\s^i$, and also on $\s^\infty$, through the infinitesimal action.

There is no problem with defining the normal and abnormal Hamiltonians on $\s^\infty$ as usual, but the proofs of Section 2 do not work: linear differential equations may not have a unique solution on ILH spaces. However, it is very easy to check that, for fixed $q_0\in\s^\infty$ and $p\in T_{q_0}^*\s^i$, we also have $p\in T_{q_0}^*\s^{i+j}$ for every $j\geq 0$ and
$$
H^{\s^i}(q,p)=H^{\s^{i+j}}(q,p),\quad \nabla^\omega H^{\s^i}(q,p)=\nabla^\omega H^{\s^{i+j}}(q,p).
$$
Consequently, $H^{\s^\infty}(q,p)=H^{\s^i}(q,p)$ and $H^{\s^\infty}(q,p)=H^{\s^i}(q,p)$. Therefore, $(q(\cdot),p(\cdot))$ is a solution of the normal geodesic equation in $T^*\s^\infty$ if and only if it is a solution to the normal geodesic equation in $T^*\s^i$, and we know that this last equation has a unique solution defined on all of $\R$. Since $T^*_{q_0}\s^\infty=\cup_{i\geq0}T_{q_0}^*\s^i$, we obtain the existence of a global geodesic flow on $T^*\s^\infty$.

This discussion shows that, on an ILH shape space of infinite order, the Hamiltonian geodesic flow is still complete, and we can solve the geodesic equations as long as we can solve them in each $\s^i$.

\subsubsection{Finding symmetries in the normal geodesic equations}

In this section, we give a method to investigate symmetries on a shape space $\s$ on which a group $G$ acts on the right with an equivariant action with respect to that of $\D^s(M)$. The goal is to prove that the momentum for the action of $G$ is constant along normal geodesics and singular curves, in a way that is similar to what happens with classical Clebsch optimal control problems with symmetries \cite{CH,MRBOOK,RV}. The problem is that in our setting, we need to consider actions that do not come from Lie groups: $G$ will usually itself be a group of diffeomorphisms of fixed regularity.  We will solve this problem in the most important example in a way that can easily be generalized to a wider range of problems.

We will only consider shape spaces of the form $\s= \mathrm{Emb}^{\alpha_0+\ell}(N,M)$, with $\ell\geq1$, $N$ a smooth compact manifold, and $\alpha_0$ the smallest integer greater than $\dim(N)/2$. The action of $\D^s(M)$ is given by the composition on the left: $\varphi\cdot q=\varphi\circ q$, so that $\xi_qX=X\circ q$.  We want to consider the symmetry group $G=\mathcal{D}^{\alpha_0+\ell}(N)$, which acts on $\s$ contnuously on the right by right composition: $q\cdot\eta=q\circ\eta$ for $q$ in $\s$ and $\eta$ in $G$. This action is indeed equivariant: $\varphi\circ(q\circ\eta)=(\varphi\circ q)\circ\eta$. Note that $G$ is a Hilbert manifold but only a continuous group, and its action $\s$ is only continuous.

However, if $q\in \mathrm{Emb}^{\alpha_0+\ell+1}(N,M)$, the omega lemma (see \cite{EM,O,S}) implies that $\eta\mapsto q\circ\eta$ is of class $\mathcal{C}^1$. Consequently,  we have an infinitesimal action 
$$
\zeta:T_eG=\Gamma^{\alpha_0+\ell}(TN)\rightarrow T_{q}\s
$$
given by $\zeta_{q}(Y)(a)=\d q(a).Y(a)$ for every $a\in S$. This allows us to define a momentum map \emph{at} $q$ for the action of $G$ by
$$
\mathfrak{m}(q,p)=\zeta_q^*p=\d q^*p\in T^*_eG=\Gamma^{-\alpha_0-\ell}(T^*N)
$$
Fix $V$ a Hilbert space of vector fields on $M$ that has continuous inclusion in $\Gamma^{s_0+\ell+k}(TM)$ for $k\geq1$. Also fix an initial point $q_0\in\s$, and assume that $q_0\in \mathrm{Emb}^{\alpha_0+\ell+1}(N,M)$. Let $(q(\cdot),p(\cdot))$ be a horizontal geodesic (resp. an abnormal curve) for the sub-Riemannian structure induced by $V$ on $\s$, and $X(\cdot)$ the corresponding control. Then, for every time $t$, $q(t)=\varphi^X(t)\circ q_0$. But since $X\in L^2(0,1;V)$, $\varphi^X(t)$ is of class $H^{s_0+\ell+1}$, so $q(t)\in \mathrm{Emb}^{\alpha_0+\ell+1}(N,M)$. Therefore, the momentum $\mathfrak{m}(t)=\mathfrak{m}(q(t),p(t))$ with respect to the action of $G$ is well-defined along the trajectory.
\begin{proposition}  \label{propo:preservedmom}
Let $(q(\cdot),p(\cdot))$ be a horizontal geodesic or an abnormal curve for the sub-Riemannian structure induced by $V$ on $\s$ with covector $p(\cdot)$, and $X(\cdot)$ the corresponding control. Assume that $q(0)$ belongs to the dense subset $\mathrm{Emb}^{\alpha_0+\ell+1}(N,M)$. Then the momentum $\mathfrak{m}(t)=\mathfrak{m}(q(t),p(t))$ with respect to the action of $G$ is well-defined and \textbf{constant} along the trajectory: $\mathfrak{m}(t)=\mathfrak{m}(0).$ 

In particular, if $k\geq2$, this momentum is constant under the Hamiltonian geodesic flow of $T^*\s$ as long as the starting point $q_0$ belongs to the dense subset $\mathrm{Emb}^{\alpha_0+\ell+1}(N,M)$.
\end{proposition}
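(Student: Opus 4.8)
The plan is to imitate the momentum computations of Propositions~\ref{prop:mom1} and \ref{prop:mom2}, but pairing the covector $p(\cdot)$ with the fundamental vector fields of the right $G$-action instead of those of the left $\mathcal D^s(M)$-action, and to exploit the fact that the two actions commute. The key identity will be that the Lie bracket on $\s$ of a fundamental vector field of the $\mathcal D^s(M)$-action and a fundamental vector field of the $G$-action vanishes; once this is available, conservation of $\mathfrak m$ along the Hamiltonian equations is a one-line consequence, and the final statement about $k\ge2$ follows by specialising to integral curves of the geodesic flow.

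Concretely, first recall from the proofs of Propositions~\ref{prop:mom1} and \ref{prop:mom2} that, in both the normal and the singular case, the covector satisfies for almost every $t$
$$
\dot q(t)=\xi_{q(t)}X(t),\qquad \dot p(t)=-\partial_q\big(\xi_{q(t)}X(t)\big)^{*}p(t),
$$
with $X(t)=K_V\xi_{q(t)}^{*}p(t)$ in the normal case and $X(\cdot)$ the given control in the singular case. As noted just before the statement, $q(t)=\varphi^X(t)\circ q_0$ lies in $\mathrm{Emb}^{\alpha_0+\ell+1}(N,M)$ for every $t$, so by the omega lemma the map $q\mapsto\zeta_q Y$ is $\mathcal C^1$ near the trajectory for every smooth $Y\in\Gamma^{\infty}(TN)$, and $t\mapsto\mathfrak m(t)(Y)=p(t)\big(\zeta_{q(t)}Y\big)$ is absolutely continuous. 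Differentiating exactly as in the proof of Proposition~\ref{prop:mom1}, in canonical coordinates,
$$
\frac{d}{dt}\,\mathfrak m(t)(Y)=\dot p(t)\big(\zeta_{q(t)}Y\big)+p(t)\big(\partial_q(\zeta_{q(t)}Y).\dot q(t)\big),
$$
and substituting the two equations of motion this becomes
$$
p(t)\Big(\partial_q(\zeta_{q(t)}Y).\xi_{q(t)}X(t)-\partial_q(\xi_{q(t)}X(t)).\zeta_{q(t)}Y\Big)=p(t)\big([\xi X(t),\zeta Y](q(t))\big).
$$

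Here $[\xi X(t),\zeta Y]$ is the Lie bracket on $\s$ of the fundamental vector field $q\mapsto\xi_q X(t)=X(t)\circ q$ of the left $\mathcal D^s(M)$-action and the fundamental vector field $q\mapsto\zeta_q Y$ of the right $G$-action. Since $\varphi\cdot(q\cdot\eta)=(\varphi\cdot q)\cdot\eta$, the flows of these two vector fields on $\s$ (namely $q\mapsto\varphi^{X(t)}_s\circ q$ and $q\mapsto q\circ\eta^Y_u$) commute, so this bracket vanishes; equivalently, one checks directly that both $\partial_q(\zeta_q Y).\xi_q X$ and $\partial_q(\xi_q X).\zeta_q Y$ equal the section $a\mapsto dX(q(a))\,dq(a)\,Y(a)$. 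Hence $\frac{d}{dt}\mathfrak m(t)(Y)=0$ almost everywhere, so $\mathfrak m(t)(Y)=\mathfrak m(0)(Y)$ for all $t$ and all smooth $Y$; as $\mathfrak m(t),\mathfrak m(0)\in\Gamma^{-\alpha_0-\ell}(T^*N)$ and $\Gamma^{\infty}(TN)$ is dense in $\Gamma^{\alpha_0+\ell}(TN)$, this forces $\mathfrak m(t)=\mathfrak m(0)$. Finally, if $k\ge2$, any integral curve of $\nabla^\omega H^\s$ starting at $(q_0,p_0)$ with $q_0\in\mathrm{Emb}^{\alpha_0+\ell+1}(N,M)$ keeps $q(t)=\varphi^X(t)\circ q_0$ in $\mathrm{Emb}^{\alpha_0+\ell+1}(N,M)$ and, on sub-arcs, is a normal geodesic, so the first part applies along the whole flow.

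The only genuine difficulty is regularity bookkeeping: because $G=\mathcal D^{\alpha_0+\ell}(N)$ is merely a continuous group, one must work one Sobolev degree higher ($q(t)\in\mathrm{Emb}^{\alpha_0+\ell+1}(N,M)$) so that the $G$-action is $\mathcal C^1$ and $\zeta_q$ depends differentiably on $q$, and the canonical-coordinate differentiation of $t\mapsto p(t)(\zeta_{q(t)}Y)$ must be read in the same formal sense as in Propositions~\ref{prop_momentum2}, \ref{prop:mom1} and \ref{prop:mom2}. Beyond that, the whole content is the commutation of the left and right actions, which makes the relevant bracket vanish.
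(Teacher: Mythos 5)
Your proposal is correct and follows essentially the same route as the paper: both reduce to showing that $t\mapsto p(t)(\d q(t).Y)$ is $H^1$ and that its derivative is the sum of $\dot p(t)(\d q(t).Y)=-p(t)(\d X(t)(q(t)).\d q(t).Y)$ and $p(t)(\partial_t(\d q(t)).Y)=p(t)(\d X(t)(q(t)).\d q(t).Y)$, which cancel. Your packaging of this cancellation as the vanishing of the Lie bracket of the fundamental vector fields of the two commuting actions is just a more conceptual restatement of the paper's direct computation, and your handling of the regularity shift to $\mathrm{Emb}^{\alpha_0+\ell+1}(N,M)$ matches the paper's.
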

\begin{proof}
Fix $Y\in \Gamma^{\alpha_0+\ell}(TN)=T_eG$. Then
$$
\mathfrak{m}(t)(Y)=p(t)(\d q(t).Y).
$$
Then $\mathfrak{m}(\cdot)(Y):[0,1]\rightarrow\R$ is of class $H^1$, because $p(\cdot)$ and $dq(\cdot)$ are of class $H^1$. This is trivial for $p(\cdot)$, and true for $dq(\cdot)$ because $(q(\cdot),dq(\cdot))$ is the deformation  induced by $X(\cdot)$ in the shape space $T\s$ of order $\ell+1$ of $(q_0,dq_0)$. Moreover, we have $\partial_t{\d q}=dX(t)(q(t)).dq(t)$. Therefore, for almost every $t$ in $[0,1]$,
$$
\partial_t\mathfrak{m}(t)(Y)=\dot{p}(t)(dq(t).Y)+p(t)(dX(t)(q(t)).dq(t).Y).
$$
But in canonical coordinates, since $\xi_{q(t)}X(t)=X(t)\circ q(t),$
$$
\dot{p}(t)(dq(t).Y)=-\partial_q(X(t)\circ q(t))^*p(t)(dq(t).Y)=-p(t)(\partial_q(X(t)\circ q(t)).dq(t).Y)=-p(t)(dX(t)(q(t)).dq(t).Y)
$$
and we do get $\partial_t(\mathfrak{m}(t)(Y))=0$.
\end{proof}

\begin{remark}
Note that when considering $\mathcal{S}=\mathrm{Emb}^\infty(N,M)$, the action on the right by $\D^\infty(N)$ is smooth and has a well-defined momentum map. Then the results of the previous section immediately combine with Proposition \ref{propo:preservedmom} to prove that this momentum is preserved under the Hamiltonian geodesic flow.
\end{remark}

\begin{remark}
The general case for abstract shape spaces would be as follows: assume a topological group $G$ with a Banach manifold structure acts continuously and equivariantly on the right on a shape space $\s$. Assume that $q_0$ is such that $\eta\in G\mapsto q_0\cdot\eta$ is of class $\mathcal{C}^1$, so that we can define a momentum map $\mathfrak{m}$ at $q_0$. Then, along any normal geodesic or singular curve $q(\cdot)$ from $q_0$ with covector $p(\cdot)$, the momentum $\mathfrak{m}(q(t),p(t))$ is well-defined and constant. 
\end{remark}

\paragraph{Application: inexact matching between shapes up to reparametrization.} The main application of Proposition \ref{propo:preservedmom} is to study problems that do not depend on a reparametrization $q\circ\eta$ of the state $q$. For example, assume we want to minimize
$$
J(X(\cdot))=A(X(\cdot))+g(q(1))
$$
over all horizontal systems $(q,X)$, where $g:\s\rightarrow\R$ is of class $\mathcal{C}^1$ and $g(q)$ only depends on Im$(q)$ (for example, $g$ could be a norm on currents \cite{GTY1,CTr}). This is equivalent to saying that $g(q\cdot \eta)=g(q)$ for every reparametrization $\eta$: if Im$(q)$=Im$(q')$, then $q=q'\circ (q'^{-1}\circ q)$. We know that the trajectory $q(\cdot)$ of every critical point of $J$ is a normal geodesic whose covector $p(\cdot)$ satisfies $p(1)=-dg(q(1)).$ 

Now, if $q_0$ is in $\mathrm{Emb}^{\alpha_0+\ell+1}(N,M)$, then $\mathfrak{m}(q(1),p(1))=\mathfrak{m}(q(1),-dg(q(1)))$. But since $g$ is invariant under the action of $G$, we have $\mathfrak{m}(q(1),-dg(q(1)))=0$. Therefore, $\mathfrak{m}(q(t),p(t))=0$ for every $t$, since the momentum is conserved along a normal geodesic.

Hence, when looking for minimizers of $J$, it is enough to consider the control system
$$
\dot{q}(t)=K_{q(t)}u(t),
$$ 
where $u(t)\in T_{q(t)}^*\s$ and $\mathfrak{m}(q(t),u(t))=0$ for almost every $t$. The cost becomes
$$
J(q(\cdot),u(\cdot))=\frac{1}{2}\int_0^1u(t)(K_{q(t)}u(t))dt+g(q(1)).
$$
Morover, $\mathfrak{m}(q(t),u(t))=0$ if and only if $u(t)$, identified with a vector field (with distributional coefficients) along Im$(q(t))$ thanks to the Riemannian metric on $M$, is everywhere orthogonal to the submanifold Im$(q(t))$. We recover the well-known (but never proved in this general setting) and widely used fact that,  when matching submanifolds, the covector must be orthogonal to the surface \cite{BHM,MM} along a minimizing deformation.

\begin{remark}
Exact matching of submanifold is much more complex, because we may again see the appearance of abnormal and elusive geodesics. However, we can still obtain a global Hamiltonian geodesic flow from any initial point $q_0\in \mathrm{Emb}^{\alpha_0+\ell+1}(N,M)$, simply by taking an initial covector $p_0$ with $\mathfrak{m}(q_0,p_0)=0$. This will yield curves that are critical points of the action among all horizontal curve with endpoints in the same $G$-orbit.
\end{remark}

\subsection{Equivariant submersions and lifted structures}

The structures defined in Section 2 are usually not Riemannian for an infinite dimensional shape space $\s$. For example, for $\s=\mathrm{Emb}^{\alpha_0+\ell}(N,M)$, we require vector fields that are more regular than the embeddings  we are studying so we cannot obtain the whole tangent bundle of $\s$. However, in most practical applications, the shape spaces are discretized into landmark spaces, for which the sub-Riemannian structure induced by a space $V$ of vector fields is, itself, Riemannian.

Sub-Riemannian geometry still has many practical uses in shape analysis. In \cite{ATY}, shape spaces with constraints that are linear with respect to the control were studied, which is precisely the framework of sub-Riemannian geometry. On the other hand, in \cite{Y1}, Younes gave a new kind of LDDMM methods using what he called diffeons, in which the momenta used to induce the vector fields deforming the shape have support outside of the shape, but those supports are still transported by the deformation. This leads to a sub-Riemannian setting as well. In this section, we will establish a framework encompassing the diffeon method in a broader setting using equivariant submersions.

The main use of this setting is to keep track of additional information on a given shape space, or to emphasize the importance of certain points in the shape. The resulting normal geodesic equations are more complex. We give some examples at the end of the section.

Let us start with a simple case to clarify the concepts we wish to introduce. Let $\mathcal{S}$ and $\mathcal{S}'$ be two shape spaces of respective orders $\ell$ and $\ell'$. Then $\hat{\mathcal{S}}=\mathcal{S}\times\mathcal{S}'$ is a shape space of order $\hat{\ell}=\max(\ell,\ell')$ for the diagonal action of $\mathcal{D}^{s_0+\hat{\ell}}(M)$
$$
\varphi\cdot (q,q')=(\varphi\cdot q,\varphi\cdot q').
$$ 
Fix $k\geq1$ and a Hilbert space of vector fields $V$ with continuous inclusion in $\Gamma^{s_0+\hat{\ell}+k}(TM)$. $V$ induces on each of $\mathcal{S}$, $\mathcal{S}'$ and $\hat{\mathcal{S}}$ a sub-Riemannian structure $SR_V^\s$, $SR_V^{\s'}$, $SR_V^{\hat\s}$, as described in Section \ref{shsp}. We will denote by $\xi$ all infinitesimal actions on these three spaces, the index making it clear which action is being used. We can then study simultaneous deformations of $q_0$ in $\s$ and $q_0'$ in $\s'$ by considering the product space $\hat\s$.

A different interesting problem to consider is the following. Fix $(q_0,q_0')\in \s\times\s'$, and consider a horizontal deformation $q(\cdot)$ of $q_0$ in $\s$. What kind of deformation of $q_0'$ does $q(\cdot)$ induce? 

More precisely, $q(\cdot)$ can be lifted to a unique minimal curve $\varphi^X$ in $\D^s(M)$ with minimal action. This lift is the flow of the vector field $X(t)=\xi_{q(t)}^{-1}\dot{q}(t)$, where $\xi_{q(t)}^{-1}:\xi_{q(t)}(V)\rightarrow (\ker\xi_{q(t)})^\perp$ is the pseudo-inverse of $\xi_{q(t)}$. It induces, in turn, a horizontal deformation
$$
{q}'(t)=({q}(t),q'(t))=(\varphi(t)\circ q_0,\varphi(t)\circ q'_0)
$$ 
of $q_0'$ in ${\mathcal{S}'}$, with
$$
\dot{{q}}'(t)=\xi_{{q'}(t)}\xi_{q(t)}^{-1}\dot{q}(t).
$$

This yields a horizontal curve $\hat{q}(\cdot)=(q(\cdot),{q}'(\cdot))$ in $\hat\s$ with
$$
\dot{\hat{q}}(t)=\xi_{\hat q(t)}\xi_{q(t)}^{-1}\dot{q}(t).
$$
\begin{remark}
The curve $\hat q(\cdot)$ is, in fact, the only one from $\hat q_0$ whose action in the sense of Remark \ref{rk:minlift} coincides with that of $q(\cdot)$. In other words, it is the unique minimizing lift of $q(\cdot)$ to $\hat{\mathcal{S}}$ through $\pi$ from $\hat{q}_0$. The converse is trivially true: if $\pi(\hat q(\cdot))=q(\cdot)$ and their action in the sense of Remark \ref{rk:minlift} coincide, then $\dot{\hat q}(t)=\xi_{\hat q(t)}\xi_{q(t)}^{-1}\dot{q}(t)$ almost everywhere.
\end{remark}

We obtain a new sub-Riemannian structure $\mathrm{SR}_{V,\s}^{\hat \s}=(\mathcal E,g^\s,\chi)$, with 
$$\begin{aligned}
\mathcal E_{(q,q')}&=\xi_q(V),\\
g^\s_{(q,q')}(u,v)&=\langle\xi_q^{-1}u,\xi_q^{-1}v\rangle,
\end{aligned}
$$ 
and 
$$
\chi_{(q,q')}u=\xi_{(q,q')}\xi_q^{-1}u.
$$

\begin{remark} 
This structure is particularly hard to study in general, because $\xi_q^-1$ is very hard to compute, and may not be even continuous in $q$. This is why our results will assume that $\xi_q(V)$ is closed in $T_q\s$ for every $q$, which will mean that $\xi_q(V)=K_q(T^*_q\s)$.
\end{remark}

This is a particular case of a lifted sub-riemannian structures on shape spaces through an equivariant submersion. We will see the general case in the next section.

We keep the notations and setting of the previous section. Going back to the end of Section \ref{sec:equivmap}, we could define a lifted sub-Riemannian structure on $\hat\s$ by lifting that of $\s$ through $\pi$. The vector bundle part would be
$$
\mathcal{E}_{\hat q}=\xi_{\pi(\hat q)}(V),
$$
the vector bundle morphism at $q$ would be given by $\xi_{\hat q}\xi_{\pi(\hat q)}^{-1}$, and the squared norm of $u\in \mathcal{E}_{\hat q}$ would be $\langle \xi_u,{\pi(\hat q)}^{-1}u\rangle$. However, this structure is not easy to study since $\xi_q^{-1}$ is hard to compute and may not even be continuous in $q$.

However, in the particular case where $\xi_q(V)$ is closed, Lemma \ref{lem:Kq} implies that $\xi_q(V)=$Im$(K_q)$, where $K_q=\xi_qK_V\xi_q^*$. This allows us to define the following lifted structure.

\begin{definition}\label{def:liftedsr}\textbf{Lifted sub-Riemannian structure:}
Let $\mathcal{S}$ and $\hat{\mathcal{S}}$ be shape spaces in $M$ of respective orders $\ell\leq\hat{\ell}$. Fix $k\geq1$, $V$ a Hilbert space of vector fields with continuous inclusion in $\Gamma^{s_0+\hat\ell}(TM)$, and $\pi:\hat\s\rightarrow\s$ an equivariant map of class $\mathcal{C}^\infty$. We assume that for every $q\in\s$, $\xi_q(V)$ is closed in $T_q\s$.

The \textbf{lifted sub-Riemannian structure} on $\hat{\s}$ is $\mathrm{SR}_\pi=(\pi^*T\s,g^K,\chi),$ where:
$$\left\lbrace
\begin{aligned}
(\pi^*T\s)_{\hat{q}}&=T^*_{\pi(\hat q)}\s\\
g_{\hat q}^K(u_1,u_2)&=u_1(K_{\pi(\hat q)}u_2),& u_1,u_2\in T^*_{\pi(\hat q)}\s,\\
\chi_{\hat q}&=\xi_{\hat{q}}K_V\xi_{\pi(\hat q)}^*.
\end{aligned}\right.
$$
We denote $A_\pi$ the corresponding action:
$$
A_\pi(\hat{q}(\cdot),u(\cdot))=\frac{1}{2}\int_0^1u(t)K_{\pi(\hat q(t))}u(t))dt.
$$
\end{definition}
\begin{remark}
The assumption that $\xi_q(V)$ be closed usually (although not always) implies that $\s$ is finite dimensional, for example a space of landmarks.
\end{remark}
\begin{remark}
We denote elements of $T^*\s$ by the letter $u$ instead of $p$ in order to stress that $u(\cdot)$ does not satisfy any of the Hamiltonian equations given in the previous section. The covector $u(\cdot)$ is simply a control, that can take any value in $T^*\s$.
\end{remark}
This structure is equivalent to the one we were looking for, as shown in the following proposition.
\begin{proposition}\label{hlift2}
A curve $\hat q(\cdot)\in H^1(0,1;\hat \s)$ is a horizontal for $\mathrm{SR}_\pi$ if and only if $\hat{q}(\cdot)=\varphi^X(\cdot)\cdot \pi(\hat q(0))$, with $X(t)=\xi_{\pi(\hat{q}(t))}^{-1}\dot{q}(t)$. Moreover, its action is equal to that of the minimal lift to $\D^{s_0+\ell}(M)$ of $\pi(\hat q(\cdot))$.
\end{proposition}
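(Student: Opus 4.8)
The plan is to convert horizontality for $\mathrm{SR}_\pi$ into a statement about a control taking values in $V$, identify that control with the minimal control of the projected curve, and then appeal to Condition 4 of Definition \ref{shapespace} for the flow and to Lemma \ref{lem:minlift} / equation \eqref{eq:mliftvec} for minimality. The equivariance identity $d\pi(\hat q).\xi_{\hat q}=\xi_{\pi(\hat q)}$ together with the formula $\chi_{\hat q}=\xi_{\hat q}K_V\xi_{\pi(\hat q)}^*$ will do almost all the work; the closedness hypothesis on $\xi_q(V)$ is used only once, in the converse.

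For the direct implication I would start from a horizontal system $(\hat q(\cdot),u(\cdot))$ of $\mathrm{SR}_\pi$ and set $q(\cdot)=\pi(\hat q(\cdot))$, $X(t)=K_V\xi_{q(t)}^*u(t)\in V$. By the proof of Lemma \ref{lem:Kq}, $X(t)\in(\ker\xi_{q(t)})^\perp$, and the reproducing property gives $\langle X(t),X(t)\rangle=u(t)(\xi_{q(t)}X(t))=u(t)(K_{q(t)}u(t))=g^K_{\hat q(t)}(u(t),u(t))$, so $X(\cdot)\in L^2(0,1;V)$ and the two actions already agree. Since $\dot{\hat q}(t)=\chi_{\hat q(t)}u(t)=\xi_{\hat q(t)}X(t)$, Condition 4 of Definition \ref{shapespace} identifies $\hat q(\cdot)$ with $\varphi^X(\cdot)\cdot\hat q(0)$; pushing the ODE forward by $d\pi$ and using equivariance gives $\dot q(t)=\xi_{q(t)}X(t)$ with $X(t)\in(\ker\xi_{q(t)})^\perp$, whence $X(t)=\xi_{q(t)}^{-1}\dot q(t)$ by \eqref{eq:mliftvec}. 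Thus $\varphi^X(\cdot)$ is the minimal lift of $\pi(\hat q(\cdot))$ in the sense of Lemma \ref{lem:minlift}, and $A_\pi(\hat q(\cdot),u(\cdot))=\tfrac12\int_0^1\langle X,X\rangle\,dt$ is precisely its action.

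For the converse I would start from $\hat q(\cdot)=\varphi^X(\cdot)\cdot\hat q(0)$ with $X(t)=\xi_{\pi(\hat q(t))}^{-1}\dot q(t)$, $q(\cdot)=\pi(\hat q(\cdot))$; then $X$ is the minimal control of the horizontal curve $q(\cdot)$ — which the formula for $X$ presupposes to be horizontal on $\s$ — hence $X(\cdot)\in L^2(0,1;V)$ with $X(t)\in(\ker\xi_{q(t)})^\perp$. I need a control $u(\cdot)$ for $\mathrm{SR}_\pi$ with $\chi_{\hat q(t)}u(t)=\dot{\hat q}(t)$, i.e. with $K_V\xi_{q(t)}^*u(t)=X(t)$. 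This is where closedness of $\xi_q(V)$ enters: then $\xi_q(V)=K_q(T^*_q\s)$, so by Lemma \ref{lem:Kq} the map $u\mapsto K_V\xi_q^*u$ carries $T^*_q\s$ onto all of $(\ker\xi_q)^\perp$ (its range is contained there, $\xi_q$ applied to it is $K_q(T^*_q\s)=\xi_q(V)=\xi_q((\ker\xi_q)^\perp)$, and $\xi_q$ is injective on $(\ker\xi_q)^\perp$), so a solution $u(t)$ exists. Taking the one of minimal norm, equivalently $u(t)=K_{q(t)}^{+}\dot q(t)$ with $K_{q(t)}^{+}$ the pseudo-inverse of the closed-range operator $K_{q(t)}$, gives $g^K_{\hat q(t)}(u(t),u(t))=\langle X(t),X(t)\rangle\in L^1$, so $(\hat q(\cdot),u(\cdot))$ is an honest horizontal system and $\hat q(\cdot)$ is horizontal for $\mathrm{SR}_\pi$ with the announced action.

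The hard part will be the measurable selection in this last step: turning the pointwise solvability of $K_{q(t)}u(t)=\dot q(t)$ into a genuine $L^2$ control $t\mapsto u(t)$. Closedness of $\xi_q(V)$ makes $K_q$ a closed-range operator and hence $K_q^{+}$ well-defined and bounded on its closed domain; since $(q,p)\mapsto K_qp$ is $\mathcal C^{k}$, $q(\cdot)\in H^1$ and $\dot q(\cdot)\in L^2$, I expect this to follow either from a Kuratowski--Ryll-Nardzewski measurable-selection argument or from the fact that $K^{+}$ inherits the regularity of $K$ on the sub-bundle where $K_q$ is an isomorphism onto $\xi_q(V)$. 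Everything else reduces to bookkeeping with Lemma \ref{lem:Kq}, Lemma \ref{lem:minlift}, \eqref{eq:mliftvec}, equivariance, and Condition 4 of Definition \ref{shapespace}.
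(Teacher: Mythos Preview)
Your proof is correct and follows essentially the same route as the paper's: set $X(t)=K_V\xi_{q(t)}^*u(t)$, use equivariance $d\pi(\hat q)\xi_{\hat q}=\xi_{\pi(\hat q)}$ together with Lemma~\ref{lem:Kq} for the direct implication, and invoke closedness of $\xi_q(V)$ to produce a preimage $u(t)$ for the converse. You are in fact more careful than the paper, which simply asserts the existence of $u(t)$ with the correct action integral without addressing the measurable-selection / $L^2$-integrability issue that you flag in your last paragraph.
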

\begin{proof}
Denote $q(\cdot)=\pi(\hat q(\cdot))$, $\hat q_0=\hat q(0)$ and $q_0=q(0).$ Now, assume $\hat q(\cdot)$ is horizontal for $\mathrm{SR}_\pi$, so that, there exists $u:t\mapsto T_{q(t)}^*\s$ in $L^2$ such that for almost every $t\in [0,1]$,
$$
\dot{\hat q}(t)=\xi_{\hat q(t)}K_V\xi_{q(t)}^*u(t).
$$
But then
$$
\dot{q}(t)=d\pi(\hat q(t)).\xi_{\hat q(t)}K_V\xi_{q(t)}^*u(t).
$$
Since $d\pi(\hat q(t)).\xi_{\hat q(t)}=\xi_{\pi(\hat q)}$, we get
$$
\dot{q}(t)=\xi_{q(t)}K_V\xi_{q(t)}^*u(t)=K_{q(t)}u(t).
$$
But then Lemma \ref{lem:Kq} states that if $X(t)=K_V\xi_{q(t)}^*u(t)$, then $X(t)=\xi_{q(t)}^{-1}\dot{q}(t)$, so we do get that  $\dot{\hat{q}}(t)=\xi_{\hat q}\xi_{q(t)}^{-1}\dot{q}(t)$ and $\hat{q}(t)=\varphi(t)\cdot\hat{q}_0.$ Moreover, using the discussion from Section \ref{sec:minlift}, we get
$$
A(q(\cdot),X(\cdot))=\frac{1}{2}\int_0^1u(t)(K_{q(t)}u(t))dt=A_\pi(\hat{q}(\cdot),u(\cdot)).
$$
For the converse, assume that for $X(\cdot)=\xi_{q(\cdot)}^{-1}\dot{q}(\cdot)$, we have $\varphi^X(\cdot)\cdot\hat q_0$. Since  for every $t$ in $[0,1]$, by hypothesis, $\xi_{q(t)}(V)$ is closed, we know that there exists $u(t)\in T_{q(t)}^*\s$ such that $X(t)=K_V\xi_{q(t)}^*u(t)$. Moreover, 
$$
\int_0^1u(t)(K_{q(t)}u(t))dt=\int_0^1\langle X(t),X(t)\rangle dt.
$$
We just need to prove that $\dot{\hat q}(t)=\xi_{\hat q(t)}K_V\xi_{q(t)}^*u(t)$ for almost every $t$. But
$$
\xi_{\hat q(t)}K_V\xi_{q(t)}^*u(t)=\xi_{\hat q(t)}X(t)=\dot{\hat q}(t).
$$
\end{proof}

Let us give a few examples.

\begin{example} Take $n\leq\hat{n}\in\N^*$, and $\mathcal{S}=Lmk_n(M)$ and $\hat{\mathcal{S}}=Lmk_{\hat{n}}(M)$. We define $\pi:Lmk_{\hat{n}}(M)\rightarrow Lmk_n(M)$, as above, by $\pi(x_1,\dots,x_{\hat{n}})=(x_1,\dots,x_n)$. Now let $V$ be a hilbert subspace of vector fields of class at least $H^{s_0+1}$, with associated kernel $K(x,y):T_y^*M\rightarrow T_xM$.

Let $\hat{q}=(x_1,\dots,x_{\hat{n}})$ and $q=\pi(\hat{q})$. Now take some 
$$
p=(p_1,\dots,p_n)\in T_q^*Lmk_n(M)=T_{x_1}^*M\times\dots\times T_{x_n}^*M.
$$
Then for $x\in M$,
$$
X(x)=K^V\xi_q^*p(x)=\sum_{i=1}^nK(x,x_i)p_i,
$$
so we get
$$
d\pi^{-1}K_qp=\left(\sum_{i=1}^nK(x_1,x_i)p_i,\dots,\sum_{i=1}^nK(x_{\hat{n}},x_i)p_i\right).
$$

On the other hand, we have $\hat{p}=d\pi^*p=(p_1,\dots,p_n,0,\dots,0)$, so we also get
$$
K_{\hat{q}}\hat{p}=\left(\sum_{i=1}^nK(x_1,x_i)p_i,\dots,\sum_{i=1}^nK(x_{\hat{n}},x_i)p_i\right).
$$
\end{example}

\begin{example} Take $n\in\N^*$, $\mathcal{S}=Lmk_n(M)$ and $\hat{\mathcal{S}}=TLmk_{\hat{n}}(M)$. We take $\pi:TLmk_{\hat{n}}(M)\rightarrow Lmk_n(M)$ the usual projection. Now let $V$ be a hilbert subspace of vector fields of class at least $H^{s_0+2}$, with associated kernel $K(x,y):T_y^*M\rightarrow T_xM$.

Let $\hat{q}=(q,v)=(x_1,\dots,x_{\hat{n}},v_1,\dots,v_n)$, where $q=(x_1,\dots,x_n)$ and $v_i\in T_{x_i}M$. Now take some 
$
p=(p_1,\dots,p_n)\in T_q^*Lmk_n(M).
$
Again, we have $\hat{p}=d\pi^*p=(p_1,\dots,p_n,0,\dots,0)\in T^*TLmk_n(M)$, so we obtain
$$
K_{\hat{q}}\hat{p}=(w_1,\dots,w_n,w'_1,\dots,w'_n),
$$
with
$$
w_i=\sum_{j=1}^nK(x_i,x_j)p_j\in T_{x_i}M
$$
and
$$
w'_i=\partial_{x_i}\left(\sum_{j=1}^nK(x_i,x_j)p_j\right)(v_i)\in T_{(x_i,v_i)}TM,
$$
which is well-defined as the derivative at $x_i$ of the map $K^V\xi_q^*p:M\rightarrow TM$ applied to $v_i$.
\end{example}

%

Lifted shapes can be used for studying how the deformation of a given shape induces the deformation of a "bigger" shape, i.e. one with more information. It can also be used to induce a different kind of deformations, such as a deformation through diffeons \cite{Y1}.

We can then define horizontal curves for this new structure, the action $A$ and length $L$ of such curves (which coincide with the action and length for $\mathcal{H}^{\hat{\mathcal{S}}}$), and the corresponding sub-Riemannian distance $d^{\hat{\mathcal{S}}}_\pi$ on $\hat{\mathcal{S}}$.

We have the following result, a straightforward consequence of Proposition \ref{hlift2}.

\begin{proposition}\label{liftedhc}
Let $\hat{q}:[0,1]\rightarrow \hat{\mathcal{S}}$ be a curve of class $H^1$, and define $q:t\mapsto \pi(\hat{q}(t))$ its projection onto $\mathcal{S}$. Then
\begin{enumerate}
\item If $q$ is a geodesic (resp. a minimizing cuvre) on $\s$, so is $\hat{q}$, with respect to both $\mathrm{SR}_V^{\hat\s}$ and $\mathrm{SR}_\pi$.
\item As a consequence, $\pi$ is a lipshitz-1 map: for every $\hat{q}_1,\hat{q}_2\in \hat{M}, \ d_{\pi}(\hat{q}_1,\hat{q}_2)\geq d_{SR}^{\mathcal{S}}(\pi(\hat{q}_1),\pi(\hat{q}_2)),$ where $d_{\pi}^{\hat \s}$ is the distance associated to  $\mathrm{SR}_\pi$.
\end{enumerate}
\end{proposition}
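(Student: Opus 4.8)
The plan is to reduce everything to the two facts in Proposition~\ref{hlift2}: every $\mathrm{SR}_\pi$-horizontal curve $\hat q$ has the form $\varphi^X(\cdot)\cdot\hat q(0)$ with $X$ the minimal lift of $q=\pi(\hat q(\cdot))$, and its $\mathrm{SR}_\pi$-action equals $A^\s(q)$, the minimal action of $q$ on $\s$. (I read the hypothesis as: $\hat q$ is horizontal for $\mathrm{SR}_\pi$ — equivalently, $\hat q=\varphi^X(\cdot)\cdot\hat q(0)$ with $X$ the minimal lift of $q$ — the only case in which ``$\hat q$ is a geodesic/minimizing'' can hold with respect to both structures.) Set $\hat q_0=\hat q(0)$ and $q_0=q(0)=\pi(\hat q_0)$. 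First I would note that such a $\hat q$ is also $\mathrm{SR}_V^{\hat\s}$-horizontal, with control $X$, and that this $X$ is in fact the minimal lift of $\hat q$ for $\mathrm{SR}_V^{\hat\s}$ as well: equivariance gives $\xi_{q(t)}=d\pi(\hat q(t))\circ\xi_{\hat q(t)}$, hence $\ker\xi_{\hat q(t)}\subseteq\ker\xi_{q(t)}$, so $X(t)\in(\ker\xi_{q(t)})^\perp\subseteq(\ker\xi_{\hat q(t)})^\perp$. Therefore the $\mathrm{SR}_\pi$-action, the minimal $\mathrm{SR}_V^{\hat\s}$-action $A^{\hat\s}(\hat q)$, and $A^\s(q)$ all coincide, and likewise for lengths. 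The whole argument then rests on this identity together with the elementary fact that $\pi$ carries a horizontal curve on $\hat\s$ to a horizontal curve on $\s$ joining the $\pi$-images of its endpoints, without increasing its minimal action.

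\emph{Minimizing case.} Let $\hat q'$ be any competitor for the structure under consideration, horizontal with $\hat q'(0)=\hat q_0$ and $\hat q'(1)=\hat q(1)$. Then $q'=\pi(\hat q'(\cdot))$ is horizontal on $\s$, joins $q_0$ to $q(1)$, and satisfies $A^\s(q')\leq A_\pi(\hat q')$ (with equality, by Proposition~\ref{hlift2}) resp. $A^\s(q')\leq A^{\hat\s}(\hat q')$ (since any control of $\hat q'$ is a control of $q'$, so the infimum defining $A^\s(q')$ is over a larger set). As $q$ is minimizing, $A^\s(q)\leq A^\s(q')$, and combining with the action identity of the previous paragraph yields that $\hat q$ minimizes the action among its competitors, for both structures.

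\emph{Geodesic case.} Here I would work with endpoint maps. Writing $\mathrm{end}^{\hat\s}_{\hat q_0}(Y)=\varphi^Y(1)\cdot\hat q_0$, equivariance gives $\mathrm{end}^\s_{q_0}=\pi\circ\mathrm{end}^{\hat\s}_{\hat q_0}$, so $(\mathrm{end}^{\hat\s}_{\hat q_0})^{-1}(\{\hat q(1)\})\subseteq(\mathrm{end}^\s_{q_0})^{-1}(\{q(1)\})$. By definition, ``$q$ geodesic on $\s$'' means its control $X$ is a critical point of $A$ restricted to the larger set; since $X$ lies in the smaller set too ($\mathrm{end}^{\hat\s}_{\hat q_0}(X)=\hat q(1)$), it is a critical point of $A$ there as well, i.e. $\hat q$ is a geodesic for $\mathrm{SR}_V^{\hat\s}$. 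For $\mathrm{SR}_\pi$ one argues with the $\mathrm{SR}_\pi$-control $u$: the action functional factors as $(\hat q,u)\mapsto A^\s(\pi(\hat q(\cdot)))$, and a $\mathcal{C}^1$ variation $u_a$ keeping $\hat q_a(0)$ and $\hat q_a(1)$ fixed induces, via $X_a=K_V\xi_{\pi(\hat q_a(\cdot))}^*u_a(\cdot)$, a $\mathcal{C}^1$ variation $X_a$ of $X$ inside $(\mathrm{end}^\s_{q_0})^{-1}(\{q(1)\})$ with $A_\pi(\hat q_a,u_a)=A(X_a)$, so criticality of $X$ forces $\partial_aA_\pi|_{a=0}=0$. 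The main obstacle is precisely this last point: making the variational bookkeeping rigorous — that $a\mapsto X_a$ is indeed $\mathcal{C}^1$, and that every admissible $\mathrm{SR}_\pi$-variation of $\hat q$ arises this way — given that $H^1(0,1;\hat\s)$ need not carry a manifold structure. As elsewhere in the paper, this is handled by systematically parametrizing curves by their controls and invoking the $\mathcal{C}^k$-dependence of the flow (Condition~4 of Definition~\ref{shapespace}) and of $(q,p)\mapsto K_V\xi_q^*p$; once set up, the estimate is routine.

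\emph{Statement 2.} For any $\hat q_1,\hat q_2\in\hat\s$ and any $\mathrm{SR}_\pi$-horizontal curve $\hat q$ joining them, $q=\pi(\hat q(\cdot))$ is horizontal on $\s$ from $\pi(\hat q_1)$ to $\pi(\hat q_2)$ with $L^\s(q)=L_\pi(\hat q)$ by Proposition~\ref{hlift2}, whence $d_{SR}^\s(\pi(\hat q_1),\pi(\hat q_2))\leq L_\pi(\hat q)$; taking the infimum over all such $\hat q$ (the inequality being trivial when there are none) gives $d_{SR}^\s(\pi(\hat q_1),\pi(\hat q_2))\leq d_\pi(\hat q_1,\hat q_2)$, i.e. $\pi$ is $1$-Lipschitz.
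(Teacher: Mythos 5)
Your argument is correct and is exactly the intended one: the paper gives no written proof, stating only that the result is ``a straightforward consequence of Proposition \ref{hlift2}'', and your write-up is a faithful elaboration of that reduction (action/length identification via the minimal lift, projection of competitors for the minimizing case, nesting of the endpoint-constraint sets for the geodesic case, and the infimum over horizontal curves for the Lipschitz bound). Your honest flagging of the differentiability issue for $\mathrm{SR}_\pi$-variations is consistent with the paper's own caveat (end of Section \ref{hamilteqlifted}) about the absence of a natural differential structure on the space of horizontal systems when $\hat\s$ is infinite dimensional.
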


\subsection{The lifted Hamiltonian geodesic equations}\label{hamilteqlifted}

In this section, we compute the normal Hamiltonian for a lifted structure, which will give us the normal geodesic equations.  We keep the notations and setting of the previous section. Moreover, we make the following assumption: 

\medskip
\centerline{(A1): The structure $\mathrm{SR}_V^\s$ is Riemannian, that is, $\xi_q(V)=T_q\s$ for every $q$ in $\s$.}
\medskip
This assumption implies that $K_q$ is a linear isomorphism. In most practical applications, this restricts us to the case where $\s$ is a finite dimensional manifold such as a landmark space, but this was already a consequence of assuming $\xi_q(V)$ closed in $T_q\s$.

The Hamiltonian $H^\pi:T^*\hat\s\rightarrow\s$ is given by
$$
H^\pi(\hat q,\hat p)=\max_{u\in T_{\pi(\hat q)}^*\s} \left(\underset{=H^\pi_1(\hat q,\hat p,u)}{\underbrace{\hat{p}(\xi_{\hat{q}}K_V\xi_{\pi(\hat  q)}^*u)-\frac{1}{2}u(K_{\pi(\hat q)}u)}}\right).
$$
Since, for fixed $(\hat q,\hat p)\in T^*\hat\s$, the right-hand side is strictly convex in $u$ thanks to (A1), the maximum is reached if and only if $\partial_uH^\pi_1(\hat q,\hat p,u)=0$, that is, if and only if 
$$
\xi_{\pi(\hat{q})}K_V\xi_{\hat q}^*\hat{p}=K_{\pi(\hat q)}u.
$$
But, because of Assumption (A1), this is equivalent to 
$$
u=u(\hat p,\hat q)=K_{\pi(\hat{q})}^{-1}\xi_{\pi(\hat{q})}K_V\xi_{\hat q}^*\hat{p}.
$$
We obtain for the normal Hamiltonian
$$
H^\pi(\hat{q},\hat{p})=H^\pi_1(\hat p,\hat q,u(\hat p,\hat q))=\frac{1}{2}u(\hat p,\hat q)(K_{\pi(q)}u(\hat p,\hat q)).
$$
Now computing the symplectic gradient of $H^\pi$ seems complicated, but when we use the fact that $\partial_uH^\pi_1(\hat p,\hat q,u(\hat p,\hat q))=0$ it gets much simpler. Considering that $K_{\pi( \hat q)}=d\pi(\hat q)K_{\hat q}d\pi(\hat q)^*$\footnote{Recall that $K_{\pi( \hat q)}=\xi_{\hat{q}}K_{\hat{q}}\xi_{\hat{q}}^*$}, we get
$$\left\lbrace
\begin{aligned}
\partial_pH^\pi(\hat{q},\hat{p})&=\xi_{\hat{q}}K_V\xi_{\pi(\hat  q)}^*u(\hat{q},\hat{p}),\\
-\partial_qH^\pi(\hat{q},\hat{p})&=
-(\partial_{\hat{q}}K_{\hat{q}}d\pi(\hat q)^*u)^*\left[\hat p-\frac{1}{2}d\pi(\hat{q})^*u \right]+\partial_{\hat{q}}[d\pi(\hat{q})^*u]^*K_{\hat{q}}^*\hat{p}.
\end{aligned}\right.
$$
Note that we took $u=u(\hat{q},\hat{p})$ for short and that none of its derivative are involved. This symplectic gradient is of class $\mathcal{C}^{k-1}$ and therefore, if $k\geq2$, possesses a well-defined flow. 

\begin{remark}
We did not prove that integral curves of this symplectic gradient project to sub-Riemannian geodesics. This is obviously true is $\hat{\s}$ is finite dimensional, since this is a well-known result in sub-Riemannian geometry. The case where $\hat{\s}$ is infinite dimensional is harder, because there is no natural differential structure on the space of horizontal systems if $\hat{\s}$ does not possess a local addition \cite{KMBOOK}. 
\end{remark}
\subsection{Some examples of lifted geodesic equations}\label{examplelifted}

We consider some simple cases of lifted shape spaces and compute their geodesic equations. Note that, contrarily to examples from Section \ref{ex2}, even the landmark cases will be strictly sub-Riemannian.

\paragraph{Extra landmarks with the Gaussian kernel}

Let $n,m\in \N^*$, and consider $\mathcal{S}=Lmk_n(\R^d)$ and $\hat{S}=Lmk_{n+m}(\R^d)$, with $\pi(x_1,\dots,x_{n+m})=(x_1,\dots,x_n)$. Then $T^*\mathcal{S}=Lmk_n(\R^d)\times (\R^d)^{n*}$, and $T^*\hat{\mathcal{S}}=Lmk_{m+n}(\R^d)\times (\R^d)^{(m+n)*}$.

We assume that the reproducing kernel $K$ of $V$ is the diagonal Gaussian kernel, as introduced in Section \ref{ex2}. Our goal is to compute the geodesic equations for this particular example. In other words, we first need to solve the linear system 
$$
K_{\pi(\hat{q})}u=\xi_{\pi(\hat{q})}K^V\xi_{\hat{q}}^*p.
$$
Then, we shall compute $\partial_{\hat{p}}H^\pi_1(\hat{q},\hat{p},u)=\xi_{\hat{q}}K^V\xi_{\pi(\hat{q})}^*u$ and $\partial_{\hat{q}}H^\pi_1(\hat{q},\hat{p},u)=(\hat{p}-\frac{1}{2}d\pi_{\hat{q}}^*u)
((\partial_{\hat{q}}K_{\hat{q}})d\pi_{\hat{q}}^*u).$ Note that the term $(\hat{p}-d\pi_{\hat{q}}^*u)
(K_{\hat{q}}((\partial_{\hat{q}}d\pi_{\hat{q}}^*)u)$ does not appear in $\partial_{\hat{q}}H^\pi_1(\hat{q},\hat{p},u)$ because $\partial_{\hat{q}}(d\pi_{\hat{q}}^*)=0$.

Here, $K_q$ can be identified to a $n\times n$ block matrix, with blocks of size $d\times d$, and the $(i,j)-$th bloc is given by $e(x_i-x_j)I_d$, with $I_d$ the identity matrix. It is invertible (see\cite{TY1}), and there exists $S=(S(x_i,x_j))_{i,j=1,\dots,n}$, a matrix of size $n\times n$ such that $\sum_{k=1}^nS(x_i,x_k)e(x_k-x_j)=1$ if $i=j$ and $0$ if $i\neq j$.

Then, for $\hat{q}=(x_1,\dots,x_{n+m})$, $u=(u_1,\dots,u_n)$ and $\hat{p}=(p_1,\dots,p_{n+m})$, we know that we have
$$
K^V\xi_{\hat{q}}^*\hat{p}(x)=\sum_{j=1}^{n+m}e(x-x_j)p_j^T.
$$
Hence, if  $(v_1,\dots,v_n)=\xi_qK^V\xi_{\hat{q}}^*\hat{p}$, we obtain
$$
v_j=\sum_{k=1}^{n+m}e(x_j-x_k)p_k^T,\quad j=1,\dots,n.
$$
So the equation $K_{\pi(\hat{q})}u=\xi_{\pi(\hat{q})}K^V\xi_{\hat{q}}^*p$ becomes
$$
\sum_{k=1}^ne(x_j-x_k)u_k^T=\sum_{k=1}^{n+m}e(x_j-x_k)p_j^T,\quad i=1,\dots,n.
$$
This linear equation is solved in $u$ by
$$
u_i=\sum_{j=1}^n\sum_{k=1}^{n+m}S(x_i,x_j)e(x_j,x_k)p_k=p_i+\sum_{j=1}^n
\sum_{k=n+1}^{n+m}S(x_i,x_j)e(x_j,x_{k})p_{k},
$$
and therefore, if $\partial_{\hat{p}}H^\pi_1(\hat{q},\hat{p},u)=\xi_{\hat{q}}K^V\xi_q^*u=(\hat{v}_1,\dots,\hat{v}_{n+m})$, we have
$$
v_{a}=\sum_{i=1}^ne(x_a,x_i)u_i^T=\sum_{i=1}^ne(x_a,x_i)p_i^T+\sum_{i,j=1}^n
\sum_{k=n+1}^{n+m}S(x_i,x_j)e(x_j,x_{k})p_{k}^T.
$$

Noting that $\partial_{\hat{q}}d\pi_{\hat{q}}=0$, we must now compute 
$$
\partial_{\hat{q}}H^\pi_1(\hat{q},\hat{p},u)=(\hat{p}-\frac{1}{2}d\pi_{\hat{q}}^*u)
((\partial_{\hat{q}}K_{\hat{q}})d\pi_{\hat{q}}^*u)+(\hat{p}-d\pi_{\hat{q}}^*u)=
(\hat{p}-\frac{1}{2}d\pi_{\hat{q}}^*u)
((\partial_{\hat{q}}K_{\hat{q}})d\pi_{\hat{q}}^*u).
$$ 
If we denote $\hat{p}'=\hat{p}-\frac{1}{2}d\pi_{\hat{q}}^*u=(p'_1,\dots,p'_{n+m})$ and write $-\partial_{\hat{q}}H^\pi_1(\hat{q},\hat{p},u)^T=(\alpha_1,\dots,\alpha_{n+m})$, we have
$$
\begin{aligned}
\alpha_a
=&
\ \ \ \ \frac{1}{\sigma}\ \sum_{i=1}^n\ \,e(x_a-x_j)[p_a'\cdot u_i+u_a\cdot p_i'](x_a-x_j)^T
\\
&+
\frac{1}{\sigma}\sum_{i=n+1}^{n+m} e(x_a-x_i)[u_a\cdot p_i'](x_a-x_i)^T,& a=&1,\dots,n,
\\
\alpha_{a}
=&
\ \ \ \ \frac{1}{\sigma}\ \sum_{i=1}^n\ \,e(x_{a}-x_i)[p'_{a}\cdot u_i](x_{a}-x_i)^T, &a=&n+1,\dots,n+m.
\end{aligned}
$$
On the other hand,
$
p'_a
={p_a}-\frac{1}{2}u_a$, hence, we obtain
$$
\begin{aligned}
\alpha_a
=&
\ \ \ \ \frac{1}{\sigma}\ \sum_{i=1}^n\ \, e(x_a-x_i)[p_a\cdot u_i+u_a\cdot p_i-u_a\cdot u_i](x_a-x_i)^T
\\
&+
\frac{1}{\sigma}\sum_{i=n+1}^{n+m} e(x_a-x_i)[u_a\cdot p_i](x_a-x_i)^T,& a=&1,\dots,n,
\\
\alpha_{a}
=&
\ \ \ \ \frac{1}{\sigma}\ \sum_{i=1}^n\ \,e(x_{a}-x_i)[p_{a}\cdot u_i](x_{a}-x_i)^T, &a=&n+1,\dots,n+m.
\end{aligned}
$$
We can finally write the lifted geodesic equations on $\hat{\mathcal{S}}$
$$
\begin{aligned}
u_a=&\ \ \ \ p_a+\sum_{i=1}^n
\,\sum_{k=n+1}^{n+m}S(x_a,x_i)e(x_i-x_{k})p_{k},&a=&1,\dots,n,
\\
\dot{x}_a=&\ \ \ \ \sum_{i=1}^ne(x_a-x_i)u_i^T
&a=&1,\dots,n+m,
\\
\dot{p}_a=&
\ \ \ \ \frac{1}{\sigma}\ \sum_{i=1}^n\ \, e(x_a-x_i)[p_a\cdot u_i+u_a\cdot p_i-u_a\cdot u_i](x_a-x_i)^T
\\
&+
\frac{1}{\sigma}\sum_{i=n+1}^{n+m} e(x_a-x_i)[u_a\cdot p_i](x_a-x_i)^T,& a=&1,\dots,n,
\\
\dot{p}_a
=&
\ \ \ \ \frac{1}{\sigma}\ \sum_{i=1}^n\ \,e(x_{a}-x_i)[p_{a}\cdot u_i](x_{a}-x_i)^T, &a=&n+1,\dots,n+m.
\end{aligned}
$$

\paragraph{Tangent spaces of landmarks with the Gaussian kernel}

In this example, we keep our Gaussian kernel $K$ as above and $\mathcal{S}=Lmk_n(\R^d)$ but we take $\hat{\mathcal{S}}=T\mathcal{S}=\mathcal{S}\times (\R^d)^n$ and $\pi(q,v)=q$. We denote $\hat{q}=(q,v)=(x_1,\dots,x_n,v_1,\dots,v_n)$. Again, we want to compute the geodesic equations, first by solving
$$
K_{\pi(\hat{q})}u=\xi_{\pi(\hat{q})}K^V\xi_{\hat{q}}^*p,
$$
then by computing $\partial \partial_{\hat{p}}H(\hat{q},\hat{p},u)=\xi_{\hat{q}}K^V\xi_{\pi(\hat{q})}^*u$ and $\partial_{\hat{q}}H(\hat{q},\hat{p},u)=(\hat{p}-\frac{1}{2}d\pi_{\hat{q}}^*u)
((\partial_{\hat{q}}K_{\hat{q}})d\pi_{\hat{q}}^*u).$ Again, the term $(\hat{p}-d\pi_{\hat{q}}^*u)
(K_{\hat{q}}((\partial_{\hat{q}}d\pi_{\hat{q}}^*)u)$ does not appear in $\partial_{\hat{q}}H(\hat{q},\hat{p},u)$ because $\partial_{\hat{q}}(d\pi_{\hat{q}}^*)=0$.

A momentum on $\hat{\mathcal{S}}$ is denoted $\hat{p}=(p,l)=(p_1,\dots,p_n,l_1,\dots,l_n)$, and we have
$$
\xi_{q,v}^*(p,l)(X)=\sum_{j=1}^np_j(X(x_i))+l_j(\d X_{x_j}(v_j)).
$$
Therefore we get
$$
K^V\xi_{q,v}^*(p,l)(x)=\sum_{j=1}^ne(x-x_j)p_j-\frac{1}{\sigma}\sum_{k=1}^ne(x-x_j)[(x-x_j)\cdot v_j]l_j^T.
$$
Hence, the equation $K_{\pi(\hat{q})}u=\xi_{\pi(\hat{q})}K^V\xi_{\hat{q}}^*p$ becomes the linear problem
$$
\sum_{k=1}^ne(x_j-x_k)u_k^T=\sum_{k=1}^ne(x_j-x_k)p_j^T-\frac{1}{\sigma}\sum_{k=1}^ne(x_j-x_k)[(x_j-x_k)\cdot v_j]l_j^T,\quad j=1,\dots,n,
$$
solved by
$$
u_i=p_i-\frac{1}{\sigma}\sum_{j,k=1}^n S(x_i,x_j)e(x_j-x_k)[(x_j-x_k)\cdot v_k]l_k,\quad i=1,\dots,n.
$$
Then we compute $\xi_{q,v}K^V\xi_{q}^*u=(w_1,\dots,w_n,w'_1,\dots,w'_n)$
$$
\left.\begin{aligned}
w_a&=\ \ \ \ \ \,\sum_{i=1}^ne(x_a-x_i)u_i^T,
\\
&\left(=\ \ \ \ \ \,\sum_{i=1}^ne(x_a-x_i)p_i^T-\frac{1}{\sigma}\sum_{j,k=1}^n e(x_j-x_k)[(x_j-x_k)\cdot v_k]l_k^T\right)\\
w'_a&=-\frac{1}{\sigma}\sum_{i=1}^ne(x_a-x_i)[v_a\cdot (x_a-x_i) ]u_i^T,
\end{aligned}\right\rbrace\qquad a=1,\dots,n.
$$
Now if we denote 
$$
\hat{p}'=\hat{p}-\frac{1}{2}d\pi_{\hat{q}}^*u=(p_1',\dots,p_n',l_1',\dots,l_n')=(p_1-\frac{1}{2}u_1, \dots,p_n-\frac{1}{2}u_n,l_1,\dots,l_n),
$$
we obtain
$$
\hat{p}'(K_{\hat{q}}d\pi^*\hat{u})=\sum_{i,j=1}^ne(x_i-x_j)[p_i'\cdot u_j]-\frac{1}{\sigma}\sum_{i,j=1}^ne(x_i-x_j)[v_i\cdot(x_i-x_j)][l_i\cdot u_j].
$$
Therefore, for $-\partial_{\hat{q}}H^\pi_1(\hat{q},\hat{p},u)=(\alpha_1,\dots,\alpha_n,\beta_1,\dots,\beta_n)$, we get
$$
\left.\begin{aligned}
\alpha_a=&\ \ \ \ \,\frac{1}{\sigma}\,\sum_{j=1}^ne(x_a-x_j)[p'_a\cdot u_j+p'_j\cdot u_a](x_a-x_j)^T
\\
&+\,\frac{1}{\sigma}\,\sum_{j=1}^ne(x_a-x_j)([l_a\cdot u_j]v_a^T-[l_j\cdot u_a]v_j^T)
\\
&-\frac{1}{\sigma^2}\sum_{j=1}^n e(x_a-x_j)[(x_a-x_j)\cdot([l_a\cdot u_j]v_a-[l_j\cdot u_a]v_j)](x_a-x_j)^T,\\
\beta_a=&\ \ \ \ \,\frac{1}{\sigma}\,\sum_{j=1}^ne(x_a-x_j)[l_a\cdot u_j ](x_a-x_j)^T,
\end{aligned}\right\rbrace\qquad a=1,\dots,n.
$$
Since $p'_i=p_i-\frac{1}{2}u_i$, $[p'_a\cdot u_j+p'_j\cdot u_a]=[p_a\cdot u_j+p_j\cdot u_a-u_j\cdot u_a]$ and we obtain the geodesic equations
$$
\left.\begin{aligned}
u_a=&p_a-\frac{1}{\sigma}\sum_{j,k=1}^n S(x_a,x_j)e(x_j-x_k)[(x_j-x_k)\cdot v_k]l_k,
\\
\dot{x}_a=&\ \ \ \ \ \ \,\sum_{i=1}^ne(x_a-x_i)u_i^T,\\
\dot{v}_a=&-\frac{1}{\sigma}\sum_{i=1}^ne(x_a-x_i)[v_a\cdot (x_a-x_i) ]u_i^T
\end{aligned}\right\rbrace\qquad a=1,\dots,n,
$$
and
$$
\left.\begin{aligned}
\dot{p}_a=&\ \ \ \ \,\frac{1}{\sigma}\,\sum_{j=1}^ne(x_a-x_j)[p'_a\cdot u_j+p'_j\cdot u_a](x_a-x_j)^T
\\
&+\,\frac{1}{\sigma}\,\sum_{j=1}^ne(x_a-x_j)([l_a\cdot u_j]v_a^T-[l_j\cdot u_a]v_j^T)
\\
&-\frac{1}{\sigma^2}\sum_{j=1}^n e(x_a-x_j)[(x_a-x_j)\cdot([l_a\cdot u_j]v_a-[l_j\cdot u_a]v_j)](x_a-x_j)^T,\\
\dot{l}_a=&\ \ \ \ \,\frac{1}{\sigma}\,\sum_{j=1}^ne(x_a-x_j)[l_a\cdot u_j ](x_a-x_j)^T,
\end{aligned}\right\rbrace\qquad a=1,\dots,n.
$$

\section*{Conclusion}

This paper gave a comprehensive study of the geometry of abstract shape spaces in the LDDMM framework, and introduced several possible areas of applications, from fibered shapes to lifted shape spaces, to even more general shapes. We also showed that the LDDMM method is, at least on infinite dimensional shape space, largely a sub-Riemannian problem, not a Riemannian one.

Other than those applications, several things still need to be investigated. First of all, the case of weak Riemannian (and even sub-Riemannian) structures, as in \cite{BHM,MM}. Most of the results still work in this case, but they  require additional attention, because the Hamiltonian equations do not always have a solution, so that the geodesic flow must be checked to exist on a case by case basis. The case of Fr\'echet and convenient shape spaces was also only briefly mentionned, and may deserve additional attention.

Another kind of problem left untouched is that of image matching. An image on $M$ is a function $I:M\rightarrow \R$. $\D^s(M)$ acts by composition of the inverse on the right $\varphi\cdot I=I\circ\varphi^{-1}$. This action does not satisfy the shape space hypothesis, and a different point of view must be adopted. Also see \cite{BGR1,BGR2} for another applications of sub-Riemannian geometry to image analysis.

Finally, infinite dimensional sub-Riemannian manifolds are still very poorly understood, particularly because of the appearance of elusive geodesics.


\begin{thebibliography}{99}

\small 


\bibitem{CTBOOK}
A. A. Agrachev,  and  Y. L. Sachkov.
\newblock {\em Control theory from the geometric viewpoint.} 
\newblock {Encyclopaedia of Mathematical Sciences, volume 87, Springer-Verlag}, 2004.

\bibitem{ABCG}
A.~A. Agrachev, U.~Boscain, G.~Charlot, R.~Ghezzi, and M.~Sigalotti.
\newblock Two-dimensional almost-{R}iemannian structures with tangency points.
\newblock {\em Ann. Inst. H. Poincar\'e Anal. Non Lin\'eaire}, 27(3):793--807,
  2010.

\bibitem{AC}
A.~A. Agrachev and M.~Caponigro.
\newblock Controllability on the group of diffeomorphisms.
\newblock {\em Ann. Inst. H. Poincar\'e Anal. Non Lin\'eaire}, 26(6):2503--2509, 2009.

\bibitem{A1}
S.~Arguill{\`e}re.
\newblock Infinite dimensional sub-Riemannian geometry and applications to shape analysis.
\newblock {\em PhD thesis}, 2014.

\bibitem{AT}
S.~Arguill{\`e}re, E.~Tr{\'e}lat.
\newblock Sub-Riemannian structures on groups of diffeomorphisms.
\newblock {\em Preprint, http://arxiv.org/abs/1409.8378}, 2014.

\bibitem{ATY}
S.~Arguill{\`e}re, E.~Tr{\'e}lat, A.~Trouv{\'e}, and L.~Youn{e}s.
\newblock Shape deformation analysis from the optimal control viewpoint.
\newblock {\em Preprint, arxiv.org/abs/1401.0661}, 2014.

\bibitem{A}
V.~Arnold.
\newblock {Sur la g\'eom\'etrie diff\'erentielle des groupes de {L}ie de            dimension infinie et ses applications \`a l'hydrodynamique des fluides parfaits.}
\newblock Ann. Inst. Fourier (Grenoble), 1966.

\bibitem{AG}
B.~Avants and J.~C. Gee.
\newblock Geodesic estimation for large deformation anatomical shape averaging and interpolation.
\newblock {\em Neuroimage}, 23:S139--S150, 2004.

\bibitem{AGH}
R. Azencott, R. Glowinski, J. He, A. Jajoo, Y. Li, A. Martynenko, R.H.W. Hoppe, S. Benzekry, S.H. Little, W.A. Zoghbi,
\textit{Diffeomorphic matching and dynamic deformable surfaces in 3D medical imaging},
Comput. Methods Appl. Math. {\bf 10} (2010), no 3, 235--274.

\bibitem{AGR}
R. Azencott, R. Glowinski, A.M. Ramos,
\textit{A controllability approach to shape identification by elastic matching},
Appl. Math. Lett. {\bf 21} (2008), no 8, 861--865.


\bibitem{BBHM}
M.~Bauer, M.~Bruveris, P.~Harms, and P.~W. Michor.
\newblock Geodesic distance for right invariant sobolev metrics of fractional order on the diffeomorphism group.
\newblock {\em Annals of Global Analysis and Geometry}, 44(1):5--21, 2013.

\bibitem{BHM2}
M.~Bauer, P.~Harms, and P.~W. Michor.
\newblock Sobolev metrics on the manifold of all Riemannian metrics.
\newblock {\em J.\ Differential Geom.}, 94(2):187--208, 2013.

\bibitem{BHM}
M.~Bauer, P.~Harms, and P.~W. Michor.
\newblock Sobolev metrics on shape space of surfaces.
\newblock {\em J. Geom. Mech.}, 3(4):389--438, 2011.

\bibitem{BMTY}
M.~F. Beg, M.~I. Miller, A.~Trouv{\'e}, and L.~Younes.
\newblock Computing large deformation metric mappings via geodesic flows of diffeomorphisms.
\newblock {\em Int. J. Comput. Vis.}, 61(2):139--157, 2005.

\bibitem{Bellaiche}
A. Bella\"iche.
\newblock The tangent space in sub-Riemannian geometry.
\newblock Sub-Riemannian geometry, 1--78.
\newblock Progr. Math., 144, Birkh\"auser, Basel, 1996.

\bibitem{BGR1}
U. Boscain, J. P. Gauthier, and F. Rossi.
\newblock Hypoelliptic heat kernel over 3-step nilpotent Lie groups.
\newblock {\em J. Math. Sci.}, 199(6):614--628, 2014.

\bibitem{BGR2}
U. Boscain, J. Duplaix, J. P. Gauthier, and F. Rossi.
\newblock Anthropomorphic image reconstruction via hypoelliptic diffusion.
\newblock {\em SIAM J.\ Control Optim.}, 50(3):1309--1336, 2012.

\bibitem{BBOOK}
H. Br\'ezis,
\newblock Functional analysis, Sobolev spaces and partial differential equations,
\newblock Universitext, Springer, New York, 2011.

\bibitem{BV}
M. Bruveris and F.-X. Vialard.
\newblock On completeness of groups of diffeomorphisms.
\newblock {\em Preprint, arxiv.org/abs/1403.2089}, 2014.

\bibitem{CCT}
B. Charlier, N. Charon and A. Trouv\'e
\newblock The fshape framework for the variability analysis of functional shapes.
\newblock {\em Preprint, http://arxiv.org/abs/1404.6039}, 2013.

\bibitem{CTr}
N. Charon and A. Trouv\'e
\newblock The varifold representation of non-oriented shapes for diffeomorphic registration.
\newblock {\em SIAM J. Imaging Sciences}, 6(4): 2547--2580, 2013.


\bibitem{ChitourJeanTrelatJDG}
Y. Chitour, F. Jean, and E. Tr\'elat.
\newblock Genericity results for singular curves.
\newblock {\em J.\ Differential Geom.}, 73(1):45--73, 2006.

\bibitem{ChitourJeanTrelatSICON}
Y. Chitour, F. Jean, and E. Tr\'elat.
\newblock Singular trajectories of control-affine systems.
\newblock {\em SIAM J.\ Control Optim.}, 47(2): 1078--1095, 2008.

\bibitem{CH}
C. J. Cotter and D. D. Holm
\newblock Continuous and Discrete Clebsch Variational Principles.
\newblock {\em Found. Comp. Math.}, 9(2): 221--242, 2009.


\bibitem{DGM}
P.~Dupuis, U.~Grenander, and M.~I. Miller.
\newblock Variational problems on flows of diffeomorphisms for image matching.
\newblock {\em Quart. Appl. Math.}, 56(3):587--600, 1998.

\bibitem{EM}
D.~G. Ebin and J.~Marsden.
\newblock Groups of diffeomorphisms and the motion of an incompressible fluid.
\newblock {\em Ann. of Math. (2)}, 92:102--163, 1970.

\bibitem{ES}
J.~Eichhorn and R.~Schmid.
\newblock Form preserving diffeomorphisms on open manifolds.
\newblock {\em Ann. Global Anal. Geom.}, 14(2):147--176, 1996.


\bibitem{GTY1}
J. Glaun\`es, A. Trouv\'e, L. Younes,
\textit{Diffeomorphic matching of distributions: a new approach for unlabelled point-sets and sub-manifolds matching},
in CVPR (2004), 712--718, Los Alamitos, IEEE Comput. Soc.


\bibitem{GM}
U.~Grenander and M.~I. Miller.
\newblock Computational anatomy: an emerging discipline.
\newblock {\em Quart. Appl. Math.}, 56(4):617--694, 1998.
\newblock Current and future challenges in the applications of mathematics
  (Providence, RI, 1997).

\bibitem{GMV}
E.~Grong, I.~Markina and A.~Vasil'ev.
\newblock Sub-riemannian geometry on infinite-dimensional manifolds.
\newblock {\em J. Geom. An., arxiv.org/abs/1201.2251}, 2014.

\bibitem{HeintzeLiu_AM1999}
E.~Heintze and X.~Liu.
\newblock Homogeneity of infinite dimensional isoparametric submanifolds.
\newblock {\em Ann. of Math. (2)}, 149:149--181, 1999.

\bibitem{HMR}
D.~Holm, J.~E. Marsden and T.~S. Ratiu.
\newblock The Euler-Poincar\'e equations and semidirect products with applications to continuum theories.
\newblock {\em Adv. Math.}, 137:1--81, 1998.

\bibitem{JM}
S.~C. Joshi and M.~I. Miller.
\newblock Landmark matching via large deformation diffeomorphisms.
\newblock {\em IEEE Transcript Image Processing}, 9(8):1357--1370, 2000.

\bibitem{KhesinLee}
B. Khesin, and P. Lee.
\newblock A nonholonomic Moser theorem and optimal transport.
\newblock {\em J. Symplectic Geom.}, 7(4): 381--414, 2009.

\bibitem{KMBOOK}
A.~Kriegl and P.~W. Michor.
\newblock {\em The convenient setting of global analysis}, volume~53 of {\em Mathematical Surveys and Monographs}.
\newblock American Mathematical Society, Providence, RI, 1997.

\bibitem{Kurcyusz}
S. Kurcyusz.
\newblock On the existence and nonexistence of Lagrange multipliers in Banach spaces.
\newblock J. Optim. Theory Appl., 20(1):81--110, 1976.

\bibitem{LiYong}
X. J. Li and J. M. Yong.
\newblock {\em Optimal control theory for infinite-dimensional systems}.
\newblock Systems \& Control: Foundations \& Applications. Birkh\"auser Boston Inc., Boston, MA, 1995.

\bibitem{MRBOOK}
J.~E. Marsden and T.~S. Ratiu.
\newblock {\em Introduction to mechanics and symmetry}, volume~17 of {\em Texts in Applied Mathematics}.
\newblock Springer-Verlag, New York, second edition, 1999.
\newblock A basic exposition of classical mechanical systems.

\bibitem{MM}
P.~W. Michor and D.~Mumford.
\newblock An overview of the {R}iemannian metrics on spaces of curves using the {H}amiltonian approach.
\newblock {\em Appl. Comput. Harmon. Anal.}, 23(1):74--113, 2007.

\bibitem{MTY1}
M.~I. Miller, A.~Trouv{\'e}, and L.~Younes.
\newblock On the metrics and Euler-Lagrange equations of computational anatomy.
\newblock {\em Annu. Rev. Biomed. Eng.}, 4:375--405, 2002.

\bibitem{MTY2}
M.~I. Miller, A.~Trouv{\'e}, and L.~Younes.
\newblock Geodesic shooting for computational anatomy.
\newblock {\em J. Math. Imaging Vision}, 24(2):209--228, 2006.

\bibitem{MBOOK}
R.~Montgomery.
\newblock {\em A tour of subriemannian geometries, their geodesics and applications}, volume~91 of {\em Mathematical Surveys and Monographs}.
\newblock American Mathematical Society, Providence, RI, 2002.

\bibitem{O}
H.~Omori.
\newblock {\em Infinite dimensional {L}ie transformation groups}.
\newblock Lecture Notes in Mathematics, Vol. 427. Springer-Verlag, Berlin, 1974.

\bibitem{P}
L.S. Pontryagin, V.G. Boltyanskii, R.V. Gamkrelidze and E.F. Mishchenko.
\newblock The mathematical theory of optimal processes.
\newblock A Pergamon Press Book, The Macmillan Co., New York, 1964.

\bibitem{RV}
T. S. Ratiu and F.-X. Vialard
\newblock Clebsch optimal control formulation in mechanics.
\newblockÊ{\em J. Geom. Mech.} 3(1):41--79, 2011.

\bibitem{RT}
L. Rifford and E. Tr\'elat.
\newblock Morse-Sard type results in sub-Riemannian geometry.
\newblockÊ{\em Math.\ Ann.} 332(1):145--159, 2005.

\bibitem{S}
R.~Schmid.
\newblock Infinite dimensional {L}ie groups with applications to mathematical physics.
\newblock {\em J. Geom. Symmetry Phys.}, 1:54--120, 2004.

\bibitem{TBOOK}
E.~Tr\'elat.
\newblock Contr\^ole optimal, th\'eorie \& applications (French) [Optimal control, theory and applications].
\newblock Vuibert, Paris, 2005.

\bibitem{T1}
A.~Trouv{\'e}.
\newblock Action de groupe de dimension infinie et reconnaissance de formes.
\newblock {\em C. R. Acad. Sci. Paris S\'er. I Math.}, 321(8):1031--1034, 1995.

\bibitem{T2}
A.~Trouv{\'e}.
\newblock Diffeomorphism groups and pattern matching in image analysis.
\newblock {\em International Journal of Computational Vision}, 37(1):17, 2005.

\bibitem{TY1}
A.~Trouv{\'e} and L.~Younes.
\newblock Local geometry of deformable templates.
\newblock {\em SIAM J. Math. Anal.}, 37(1):17--59 (electronic), 2005.

\bibitem{TY2}
A.~Trouv{\'e} and L.~Younes.
\newblock Shape spaces.
\newblock In O.~Scherzer, editor, {\em Handbook of Mathematical Methods in
  Imaging}, pages 1309--1362. Springer New York, 2011.

\bibitem{YBOOK}
L.~Younes.
\newblock {\em Shapes and diffeomorphisms}, volume 171 of {\em Applied
  Mathematical Sciences}.
\newblock Springer-Verlag, Berlin, 2010.

\bibitem{Y1}
L.~Younes.
\newblock Constrained {D}iffeomorphic {S}hape {E}volution.
\newblock {\em Found. Comput. Math.}, 12(3):295--325, 2012.

\end{thebibliography}
\end{document}